\newcommand{\balpha}{\boldsymbol{\alpha}}
\newcommand{\bbeta}{\boldsymbol{\beta}}
\newcommand{\C}{\mathbb {C}}
\newcommand{\bp}{\boldsymbol{\rho}}
\newcommand{\cX}{\mathcal {X}}
\newcommand{\bH}{\mathbb{H}}
\newcommand{\R}{\mathbb{R}}
\newcommand{\bl}{\boldsymbol{e_\ell}}
\newcommand{\br}{\boldsymbol{e_r}}
\newtheorem{Pa}{Paper}[section]
\newtheorem{theorem}[Pa]{{\bf Theorem}}
\newtheorem{lemma}[Pa]{{\bf Lemma}}
\newtheorem{algorithm}[Pa]{{\bf Algorithm}}
\newtheorem{corollary}[Pa]{{\bf Corollary}}
\newtheorem{remark}[Pa]{{\bf Remark}}
\author[V. Bolotnikov]{Vladimir Bolotnikov}
\address{Williamsburg}
\title[Sylvester equation]{On the Sylvester matrix equation over quaternions}
\begin{document}
\begin{abstract}
The Sylvester equation $AX-XB=C$ is considered in the setting of quaternion matrices.
Conditions that are necessary and sufficient for the existence of a unique solution 
are well-known. We study the complementary case where the equation either has infinitely
many solutions or does not have solutions at all.
Special attention is given to the case where $A$ and $B$ are respectively, lower and 
upper triangular two-diagonal matrices (in particular, if $A$ and $B$ are Jordan blocks).
\end{abstract}

\maketitle 
\section{Introduction}
\setcounter{equation}{0}

We start with the complex case: given complex matrices $A\in\C^{n\times n}$, $B\in\C^{m\times m}$, 
$C\in\C^{n\times m}$, the Sylvester equation 
\begin{equation}
AX-XB=C
\label{1.1}
\end{equation}
has a unique solution $X=\left[x_{ij}\right]\in\C^{n\times m}$
if and only if the spectrums $\sigma(A)$ and $\sigma(B)$ are disjoint.
The result was established by Sylvester \cite{sylv} via representing  
\eqref{1.1} in the equivalent form $G{\bf x}={\bf c}$ where $G=A\otimes I_m-I_n\otimes B^\top$
and where ${\bf x}$, ${\bf c}$ are the columns constructed from the entries 
of $X$ and $C$ by 
\begin{equation}
{\bf x}={\rm Col}_{1\le i\le
n}\left({\rm Col}_{1\le i\le m}x_{ij}\right),\quad
{\bf c}={\rm Col}_{1\le i\le
n}\left({\rm Col}_{1\le i\le m}c_{ij}\right).
\label{1.2}
\end{equation}
Thus, the equation \eqref{1.1} has a unique solution (for any $C\in\C^{n\times
m}$) if and only if the matrix $G$ is invertible (equivalently,  
$\sigma(A)\cap\sigma(B)=\emptyset$), in which case the unique solution $X$ is recovered from the 
column $G^{-1}{\bf c}$. The infinite-dimensional extension of the Sylvester's theorem 
as well as the integral formula for the unique solution $X$ of \eqref{1.1} is due to
Rosenblum \cite{rosen}. Some other explicit formulas for $X\in\C^{n\times m}$ (in terms of $A$, $B$ and 
$C$ rather than their entries) can be found in the survey \cite{lanc}.
If $\sigma(A)\cap\sigma(B)\not=\emptyset$, then \eqref{1.1} has a 
solution if and only if ${\rm rank}G={\rm rank}\begin{bmatrix}G & {\bf c}\end{bmatrix}$,
while the homogeneous equation $AX=XB$ has $d$ linearly independent solutions where the integer $d=\dim 
{\rm Nul}(G)$ can be expressed in terms of invariant factors of  matrix pencils $A$ and $B$ 
\cite{ces, fro}). 

\smallskip

Let  $\mathcal J_n(\alpha)$ denote the $n\times n$ lower
triangular Jordan block with $\alpha$ on the main diagonal:
\begin{equation}
\mathcal J_n(\alpha)=\alpha I_n+F_n,\quad\mbox{where}\quad F_n=\left[\delta_{i,j+1}\right]_{i,j=1}^{n}
\label{1.3}
\end{equation}
and where $\delta_{i,j}$ is the Kronecker symbol. If $A$ and $B$ are Jordan blocks 
$A=\mathcal J_n(\alpha)$ and 
$B=\mathcal J^\top_{m}(\beta)$, the equation \eqref{1.1} simplifies to
\begin{equation}
(\alpha-\beta)X=C-F_n X+XF_m^\top.
\label{1.4}
\end{equation}
If $\alpha\neq \beta$, we iterate the latter equality $n+m-1$ times and arrive at
\begin{equation}
X=\sum_{k=0}^{n+m-2}(\alpha-\beta)^{-k-1}\sum_{i+j=k}(-1)^{k+i}\binom{k}{i}F_n^iC(F^\top_m)^j.
\label{1.5}
\end{equation}
The formula \eqref{1.5} appeared in \cite{ma} and in a slightly different form, in \cite{rutherford}.
If $\alpha=\beta$ and $n\ge m$ (this case was considered in \cite{ma}), the equation \eqref{1.4} can be
written entry-wise as follows:
$$
0=c_{1,1}, \quad x_{i,1}=c_{i+1,1}, \quad x_{1,j}=-c_{1,j+1},
$$
\begin{equation}
x_{i,j+1}-x_{i+1,j}=c_{i+1,j+1}\quad\mbox{for}\quad 1\le i< n; \; \; 1\le j< m.
\label{1.6}
\end{equation}
Then one can see that if the system \eqref{1.6} has a solution, then necessarily
\begin{equation}
\sum_{i=1}^{k-1}c_{i,k-i}=0\quad\mbox{for}\quad k=2,\ldots,n.
\label{1.7}
\end{equation}
On the other hand if conditions \eqref{1.7} are met, then for any choice of
fixed $x_{n,1},\ldots,x_{n,m}$, the system \eqref{1.5} has a unique solution.
In other words, the bottom row in $X$ serves as a free and independent parameter in
the parametrization of all solutions $X$ of the Sylvester equation \eqref{1.4}.

\smallskip

In general, one may reduce given matrices $A$ and $B$ to their lower and upper Jordan forms
(with say, $k$ and $\ell$ Jordan cells, respectively) subsequently splitting the equation \eqref{1.1} to 
$k\ell$ Sylvester equations of the form \eqref{1.4}. 

\smallskip

In this paper, we will focus on the equation \eqref{1.1} over quaternions:
\begin{equation}
AX-XB=C,\quad\mbox{where}\quad A\in\bH^{n\times n}, \; B\in\bH^{m\times m}, \; C\in\bH^{n\times m}.
\label{1.10}   
\end{equation}
Since multiplication in $\bH$ is non-commutative, 
the equation \eqref{1.10} is not completely trivial even in the 
scalar case. The following result back to Hamilton (see e.g., \cite[p. 123]{tait}).
\begin{theorem}
Given $\alpha,\beta,c\in\bH$, the equation $\; \alpha x-x\beta =c \; $ has a unique solution if and only 
if 
\begin{equation}
P_{\alpha,\beta}:=|\alpha|^2-(\alpha+\overline\alpha)\beta+\beta^2\neq 0
\label{1.11}  
\end{equation}
and this unique solution equals $x=(\overline{\alpha}c-c\beta)P_{\alpha,\beta}^{-1}$.
\label{T:1.1}
\end{theorem}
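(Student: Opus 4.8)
The plan is to treat $L\colon x\mapsto\alpha x-x\beta$ as an $\R$-linear endomorphism of $\bH\cong\R^4$ and to linearize the problem through a companion map. Alongside $L$ I would introduce $M\colon y\mapsto\overline{\alpha}\,y-y\beta$ and write $R_P$ for right multiplication by $P_{\alpha,\beta}$ on $\bH$. The heart of the argument is the operator identity
\[
M(L(x))=L(M(x))=x\,P_{\alpha,\beta}\qquad(x\in\bH),
\]
i.e.\ $M\circ L=L\circ M=R_P$. To obtain it I would expand $\overline{\alpha}(\alpha x-x\beta)-(\alpha x-x\beta)\beta$, collect the cross terms as $(\alpha+\overline{\alpha})x\beta$, and invoke the two facts that make quaternionic algebra cooperate here: $\alpha\overline{\alpha}=\overline{\alpha}\alpha=|\alpha|^2$, and $\alpha+\overline{\alpha}$ and $|\alpha|^2$ are real, hence central, so they move freely across $x$. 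Reversing the roles of $\alpha$ and $\overline{\alpha}$ gives the second equality. Spotting this factorization of the natural second-order operator is really the only nonroutine step; everything afterward is bookkeeping.

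Assume first that $P_{\alpha,\beta}\neq0$. A nonzero quaternion is invertible, and since $P_{\alpha,\beta}$ is a polynomial in $\beta$ with real coefficients it commutes with $\beta$, hence so does $P_{\alpha,\beta}^{-1}$; consequently $L$ commutes with right multiplication by $P_{\alpha,\beta}^{-1}$. For uniqueness I would apply $M$ to any solution of $L(x)=c$: the identity gives $x\,P_{\alpha,\beta}=M(c)=\overline{\alpha}c-c\beta$, forcing $x=(\overline{\alpha}c-c\beta)P_{\alpha,\beta}^{-1}$. For existence I would check that this candidate works, computing $L\big((\overline{\alpha}c-c\beta)P_{\alpha,\beta}^{-1}\big)=L(M(c))\,P_{\alpha,\beta}^{-1}=(c\,P_{\alpha,\beta})P_{\alpha,\beta}^{-1}=c$, where the first equality uses that $L$ commutes with $R_{P_{\alpha,\beta}^{-1}}$ and the second uses $L\circ M=R_P$. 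This settles the ``if'' direction and simultaneously yields the stated formula.

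For the converse I would argue contrapositively: if $P_{\alpha,\beta}=0$, then $L$ has nontrivial kernel, so $L(x)=c$ can never have exactly one solution (its solution set, if nonempty, is a coset of $\ker L$). Indeed $L\circ M=R_{P_{\alpha,\beta}}=0$, so $L(M(y))=0$ for every $y$. If $M$ is not identically zero, any $y$ with $M(y)\neq0$ produces a nonzero element $M(y)\in\ker L$. The only remaining case is $M\equiv0$; here $M(1)=0$ gives $\beta=\overline{\alpha}$, and then $\overline{\alpha}y=y\overline{\alpha}$ for all $y$ forces $\overline{\alpha}$ central, hence real, so that $\alpha=\beta\in\R$ and $L$ is the zero operator with $\ker L=\bH$. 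In either case $\ker L\neq\{0\}$, which completes the argument. The only mild subtlety I anticipate is precisely this degenerate branch $M\equiv0$, which the centrality of real quaternions disposes of immediately.
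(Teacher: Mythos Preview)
Your argument is correct and matches the paper's technique: although Theorem~\ref{T:1.1} is stated without proof as a classical result, the identity $M(L(x))=xP_{\alpha,\beta}$ is exactly the derivation of equation~\eqref{2.12u} in the proof of Theorem~\ref{T:3.2}, and the equivalence established there amounts to your $L\circ M=R_P$. For the converse you deduce $\ker L\neq\{0\}$ directly from $L\circ M=0$, whereas the paper identifies $\ker L$ explicitly as the plane $\Pi_{\alpha,\beta}$ in Lemma~\ref{L:6.2}; both routes are straightforward.
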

Taking the advantage of complex representations for quaternion matrices \cite{lee}, it        
is always possible to reduce \eqref{1.10} to certain complex Sylvester equation producing     
in particular, the uniqueness criterion: {\em the equation \eqref{1.10} has a unique solution if and only 
if the right spectrums of $A$ and $B$ are disjoint}; see \cite{huang, song} and Theorem \ref{T:3.1} below.
However, further results obtained on this way and briefly surveyed in Section 3
refer, to some extent, to complex representations of matrices $A$ and $B$ rather
than the matrices themselves; see e.g., formula \eqref{2.9u} for the unique solution.

\smallskip

Our contribution here are several explicit formulas for the unique
solution given exclusively in terms of the original matrices $A$, $B$, $C$ in the cases where 
(1) $A$ and $B$ are Jordan blocks (Theorem \ref{T:3.2} presents the quaternion analog of formula 
\eqref{1.5}), (2) $A$ is two-diagonal (Theorems \ref{T:3.2a} and \ref{T:3.8}), and (3) $A$ is 
lower-triangular (Theorem \ref{T:3.6}). Making use of canonical Jordan forms for $A$ and $B$
(see \cite{wieg} and Theorem \ref{T:2.3} below) one can reduce the general case to the one
where $A$ and $B$ are Jordan cells.

\smallskip

The core of the paper is the study of the singular case (the right 
spectrums of $A$ and $B$ are not disjoint). Special attention is given to the case where 
$A$ and $B$ are two-diagonal matrices (see formulas \eqref{2.20}). 
As will be explained in Section 2.4, Sylvester equations with $A$ and $B$ of this form 
arise in the context of polynomial interpolation over quaternions and the results on such
special Sylvester equations are needed to explicitly describe quasi-ideals in the ring of 
quaternion polynomials. In Section 4 we present necessary and 
sufficient conditions for the equation to have a solution (Theorem \ref{T:4.4}) which become 
more transparent if $A$ and $B$ are Jordan blocks (Theorem \ref{T:4.6} presents the analog of conditions 
\eqref{1.7} from \cite{ma}). Also in Section 4, we present an algorithm for constructing a solution
to the (solvable) equation \eqref{1.10}. In Section 5, we parametrize the solution set of the homogeneous 
equation $AX-XB=0$; the parametrization contains $\min(m,n)$ free independent parameters, each one of 
which varies in a two-dimensional real subspace of $\bH$. In case $A=\mathcal I_n(\alpha)$ and $B=\mathcal 
I_m(\beta)$ are Jordan blocks, the general solution of the homogeneous equation is a ``triangular" Hankel 
matrix all entries of which satisfy the homogeneous scalar Sylvester equation $\alpha x-x\beta=0$ 
(Corollary \ref{C:5.2}). 

\section{Preliminaries}
\setcounter{equation}{0}

In this section we collect basic facts to make presentation self-contained.
We first fix notation and terminology. By $\bH$ we denote the skew field of quaternions
$\alpha=x_0+{\bf i}x_1+{\bf j}x_2+{\bf k}x_3$ where  $x_0,x_1,x_2,x_3\in\mathbb R$
and where ${\bf i}, {\bf j}, {\bf k}$ are the imaginary units commuting with $\R$ and
satisfying ${\bf i}^2={\bf j}^2={\bf k}^2={\bf ijk}=-1$. For $\alpha\in\bH$ as above,
its real and imaginary parts, the quaternion conjugate and the absolute value
are defined as ${\rm Re}(\alpha)=x_0$, ${\rm Im}(\alpha)={\bf i}x_1+{\bf j}x_2+{\bf k}x_3$,
$\overline \alpha={\rm Re}(\alpha)-{\rm Im}(\alpha)$ and
$|\alpha|^2=\alpha\overline{\alpha}=|{\rm Re}(\alpha)|^2+|{\rm Im}(\alpha)|^2$,
respectively.  Two quaternions $\alpha$ and $\beta$ are called {\em equivalent} (conjugate
to each other) if $\alpha=h^{-1}\beta h$ for some nonzero $h\in\mathbb H$; in notation,
$\alpha\sim\beta$. It turns out (see e.g., \cite{brenner}) that
\begin{equation}
\alpha\sim\beta\quad\mbox{if and only if}\quad {\rm Re}(\alpha) ={\rm Re}(\beta) \;
\mbox{and} \; |\alpha|=|\beta|,
\label{2.1}
\end{equation}
so that the {\em conjugacy class} of a given $\alpha\in\mathbb H$ form a $2$-sphere (of
radius $|{\rm Im}(\alpha)|$ around ${\rm Re}(\alpha)$) which will be denoted  by $[\alpha]$.
It is clear that $[\alpha]=\{\alpha\}$ if and only if $\alpha\in\R$. 

\smallskip

A finite ordered collection ${\balpha}=(\alpha_1,\ldots,\alpha_n)$ will be called a {\em spherical 
chain} (of length $n$) if
\begin{equation}
\alpha_1\sim \alpha_2\sim\ldots\sim\alpha_k\quad\mbox{and}\quad \alpha_{j+1}\neq
\overline{\alpha}_j\quad\mbox{for}\quad j=1,\ldots,n-1.
\label{2.2}
\end{equation}
The latter notion is essentially non-commutative: a spherical chain 
$\balpha=(\alpha_1,\ldots,\alpha_n)$ consisting of commuting elements necessarily belongs to the 
set $\{\alpha_1,\overline{\alpha}_1\}$ which
together with inequality in \eqref{2.2} implies that all elements in $\balpha$ are the same:
\begin{equation}
\balpha=(\alpha,\alpha,\ldots,\alpha),\qquad \alpha\in\bH.
\label{2.3}
\end{equation}
\subsection{Quaternion matrices}
We denote by $\bH^{n\times m}$ the space of $n\times m$ matrices with quaternion entries.
The definitions of the transpose matrix $A^\top$, the quaternion-conjugate matrix $\overline{A}$
and the adjoint matrix $A^*$ are the same as in the complex case. 

\smallskip

An element $\alpha\in\bH$ is called a (right) eigenvalue of the matrix $A\in\bH^{n\times n}$
if $A{\bf x}={\bf x}\alpha$ for some nonzero ${\bf x}\in{\bH}^{n\times 1}$. In this case,
for any $\beta=h^{-1}\alpha h\sim \alpha$ we also have $A{\bf x}h=  {\bf x}hh^{-1}\alpha h={\bf x}h\beta$
and hence, any element in the conjugacy class $[\alpha]$ is a right eigenvalue of $A$. Therefore,
the right spectrum $\sigma_{\bf r}(A)$ is the union of disjoint conjugacy classes (some of which 
may be real singletons). The studies of right eigenvalues and canonical forms for quaternion 
matrices were carried out in \cite{brenner, lee, wieg}. In particular, it was shown in \cite{wieg}
that any square quaternion matrix is similar to a complex matrix in Jordan form.
\begin{theorem}
For every $A\in\bH^{n\times n}$, there is an invertible $S\in\bH^{n\times n}$ such that 
$S^{-1}AS={\displaystyle\bigoplus_{i=1}^k \mathcal J_{n_i}(\alpha_i)}$, where 
$\alpha_i\in\C$ and $\; {\rm Im}\alpha_i\ge 0\; $ for $i=1,\ldots,k$.
\label{T:2.3}
\end{theorem}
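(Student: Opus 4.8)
The plan is to reduce the quaternionic statement to the classical complex Jordan form via the \emph{complex representation} of quaternion matrices. First I would fix the decomposition $\bH=\C\oplus\C\mathbf{j}$ and view $\bH^{n}$ as a right $\C$-vector space of dimension $2n$; left multiplication by $A$ is then right-$\C$-linear and is encoded by a matrix $\chi(A)\in\C^{2n\times 2n}$. The assignment $\chi$ is an injective unital $\R$-algebra homomorphism, so $\chi(AB)=\chi(A)\chi(B)$ and $\bH$-similarity is carried to $\C$-similarity. The extra ingredient I would extract is the interaction with right multiplication by $\mathbf{j}$, call it $J$: since $\mathbf{j}\lambda=\overline{\lambda}\,\mathbf{j}$ for $\lambda\in\C$, the map $J$ is conjugate-linear over $\C$ with $J^{2}=-I$, and since left and right multiplications commute, $J$ commutes with $\chi(A)$. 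Writing $J(x)=\mathcal{J}\overline{x}$ with a fixed matrix $\mathcal{J}$ satisfying $\mathcal{J}\overline{\mathcal{J}}=-I$, this reads $\mathcal{J}^{-1}\chi(A)\mathcal{J}=\overline{\chi(A)}$, so $\chi(A)$ is $\C$-similar to its own conjugate.

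Next I would place $\chi(A)$ in complex Jordan form and use these two relations to read off a symmetry of its blocks. Because $J$ is conjugate-linear, it carries the generalized eigenspace of $\chi(A)$ at $\lambda$ isomorphically onto the one at $\overline{\lambda}$ and intertwines the corresponding nilpotent parts; hence for every non-real $\lambda$ the Jordan blocks at $\lambda$ and at $\overline{\lambda}$ occur with identical sizes and multiplicities. The delicate point, and the step I expect to be the main obstacle, is the \emph{real} spectrum: there $J$ preserves the generalized eigenspace $V_{\mu}$ and restricts to a conjugate-linear map with $J^{2}=-I$ commuting with the nilpotent $N=\chi(A)-\mu I$, i.e.\ a quaternionic structure on $V_\mu$ compatible with $N$. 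I would argue that such a structure forces every Jordan block size of $N$ on $V_{\mu}$ to occur an even number of times, pairing real blocks exactly as in the non-real case. Consequently $\chi(A)$ is $\C$-similar to $\bigoplus_{i}\bigl[\mathcal{J}_{n_i}(\alpha_i)\oplus \mathcal{J}_{n_i}(\overline{\alpha_i})\bigr]$, where each conjugate pair is represented by the unique $\alpha_i\in\C$ with $\mathrm{Im}\,\alpha_i\ge 0$ singled out by \eqref{2.1}.

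Finally I would descend back to $\bH$. A direct computation in the basis $\{1,\mathbf{j}\}$ shows that for a matrix $M$ with complex entries one has $\chi(M)=M\oplus\overline{M}$, so the block-paired form above is precisely $\chi(B)$ with $B=\bigoplus_{i}\mathcal{J}_{n_i}(\alpha_i)$; thus $\chi(A)$ and $\chi(B)$ are $\C$-similar. It remains to promote a complex intertwiner to a quaternionic one. I would characterize the matrices of the form $\chi(S)$ as exactly those $T$ with $T\mathcal{J}=\mathcal{J}\overline{T}$, and then symmetrize an invertible intertwiner $T$ satisfying $\chi(A)T=T\chi(B)$: the matrix $U=T+\mathcal{J}\,\overline{T}\,\mathcal{J}^{-1}$ is again an intertwiner and, using $\mathcal{J}\overline{\mathcal{J}}=-I$, satisfies $U\mathcal{J}=\mathcal{J}\overline{U}$, so $U=\chi(S)$ for some $S\in\bH^{n\times n}$. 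Since the real space of $J$-symmetric intertwiners complexifies to the full intertwiner space, the determinant cannot vanish identically on it once it is nonzero on the full space; a genericity argument then yields an \emph{invertible} such $U=\chi(S)$. Applying $\chi^{-1}$ to $\chi(A)\chi(S)=\chi(S)\chi(B)$ gives $AS=SB$, that is, $S^{-1}AS=\bigoplus_{i}\mathcal{J}_{n_i}(\alpha_i)$ with $\mathrm{Im}\,\alpha_i\ge 0$, as required.
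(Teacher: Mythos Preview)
The paper does not supply a proof of this theorem at all; it is quoted from Wiegmann \cite{wieg} as background (``it was shown in \cite{wieg} that any square quaternion matrix is similar to a complex matrix in Jordan form''), so there is no in-paper argument to compare against. Your plan is the standard complex-representation route (essentially Lee \cite{lee} together with the descent step), and as an outline it is sound: the conjugate-linear structure $J$ with $J^{2}=-I$ gives $\mathcal{J}^{-1}\chi(A)\mathcal{J}=\overline{\chi(A)}$, which forces the pairing of Jordan blocks at $\lambda$ and $\overline{\lambda}$ and, on each real generalized eigenspace, the even multiplicity of every block size; and the symmetrization $U=T+\mathcal{J}\,\overline{T}\,\mathcal{J}^{-1}$ lands in the image of $\chi$.

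Two points deserve tightening if you write this up in full. First, the ``main obstacle'' you flag---that a quaternionic structure commuting with a nilpotent $N$ forces each Jordan size to occur with even multiplicity---is correct but not proved; one clean way is to note that $J$ maps $\ker N^{k}$ to itself, so each $\dim_{\C}\ker N^{k}$ is even, and the block multiplicities $\dim\ker N^{k}-2\dim\ker N^{k-1}+\dim\ker N^{k-2}$ are therefore even. Second, your genericity step is right but the reason should be stated: the $J$-symmetric intertwiners form a \emph{real form} of the complex intertwiner space (since $T\mapsto \mathcal{J}\,\overline{T}\,\mathcal{J}^{-1}$ is a conjugate-linear involution), so a complex polynomial such as $\det$ that vanishes on the real form vanishes identically; since $\det$ is not identically zero on intertwiners, it is nonzero at some $J$-symmetric $U=\chi(S)$. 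With those two details filled in, your argument is a complete proof of the cited theorem.
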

Making use of Theorem \ref{T:2.3} one can reduce $A$ and $B$ to their lower and upper Jordan forms
$$
S^{-1}AS=\bigoplus_{i=1}^k \mathcal J_{n_i}(\alpha_i),\quad T^{-1}BT=\bigoplus_{j=1}^\ell \mathcal
J^\top_{m_j}(\beta_j)
$$
and conformally decompose $S^{-1}CT=[C_{ij}]$ and $S^{-1}XT=[X_{ij}]$ to see that
\eqref{1.10} splits into $k\ell$ Sylvester equations
\begin{equation}
\mathcal J_{n_i}(\alpha_i)X_{ij}-X_{ij}\mathcal J^\top_{m_j}(\beta_j)=C_{ij}.
\label{1.9}   
\end{equation}
This reduction suggests to study the ``basic" case where $A$ and $B$ are Jordan blocks, i.e.,
to get the quaternion analogs of the formula \eqref{1.5} and conditions \eqref{1.7}.
The defficiency of this approach is that most of explicit formulas will rely on  
similarity matrices $S$ and $T$. 
 \subsection{Complex representations}
Since each $\alpha\in\bH$ admits a unique representation of the form $\alpha=\alpha_1+\alpha_2{\bf j}$
with $\alpha_1,\alpha_2\in\C$, any matrix $A\in\bH^{n\times m}$ can be written uniquely as
$A=A_1+A_2{\bf j}$ with $A_1, \, A_2\in\C^{n\times m}$. The map
\begin{equation}
A=A_1+A_2{\bf j}\mapsto \varphi(A)=\begin{bmatrix}A_1 & A_2 \\ -\overline{A}_2 & \overline{A}_1\end{bmatrix}
\label{2.4}
\end{equation}
that associates to each quaternion matrix its {\em complex representation} was introduced in \cite{lee}.
It is additive and multiplicative in the sense that 
\begin{equation}
\varphi(A+B)=\varphi(A)+\varphi(B)\quad\mbox{and}\quad\varphi(AB)=\varphi(A)\varphi(B)
\label{2.5}
\end{equation}
for rectangular quaternion matrices of appropriate sizes. It was shown in \cite{lee} that if $\lambda\in\C$ is 
an eigenvalue of $\phi(A)$, then any element
from the $2$-sphere $[\lambda]\subset\bH$ is a right eigenvalue of $A$ and that all right eigenvalues of $A$
arise in this way (this fact implies that the right spectrum of an $n\times n$ matrix
is the union of at most $n$ disjoint conjugacy classes).
Therefore, for matrices $A\in\bH^{n\times n}$ and $B\in\bH^{m\times m}$, the following conditions
are equivalent:
\begin{equation}
\sigma(\varphi(A))\cap\sigma(\varphi(B))=\emptyset\quad\Longleftrightarrow\quad
\sigma_{\bf r}(A)\cap \sigma_{\bf r}(B)=\emptyset. 
\label{2.6}
\end{equation}
Observing that ${\bf j}D=\overline{D}{\bf j}$ for any complex matrix $D$ we define the map 
\begin{align}
\psi: \; Y=\begin{bmatrix}Y_{11} & Y_{12} \\ Y_{21} & Y_{22}\end{bmatrix}\mapsto&  
\frac{1}{2}\begin{bmatrix}I_n & -{\bf j}I_n\end{bmatrix}Y\begin{bmatrix}I_m \\ {\bf
j}I_m\end{bmatrix}\label{2.7}\\
&=\frac{Y_{11}+\overline{Y}_{22}}{2}+\frac{Y_{12}-\overline{Y}_{21}}{2} \, {\bf j}\notag
\end{align}
assigning to each matrix $Y\in\C^{2n\times 2m}$ a quaternion matrix constructed from 
the blocks $Y_{ij}\in\C^{n\times m}$. It is easily verified that $\psi$ is the left inverse of 
$\phi$: 
\begin{equation}
\psi(\phi(A))=A\quad\mbox{for any}\quad A\in\bH^{n\times m}.
\label{2.8}   
\end{equation}
Besides, $\psi$ is additive and, although not multiplicative, the equalities
\begin{equation}
\begin{array}{ll}
\psi(\phi(A)Y)&=\psi(\phi(A))\psi(Y)=A\psi(Y),\\ [1mm]
\psi(Y\phi(B))&=\psi(Y)\psi(\phi(B))=\psi(Y)B\end{array}
\label{2.9}
\end{equation}
hold for any $A\in\bH^{n\times m}$, $B\in\bH^{p\times q}$ and $Y\in\C^{2m\times 2p}$.

\subsection{Quaternion polynomials}
Let $\bH[z]$ denote the ring of polynomials in one formal variable $z$ which commutes with
quaternionic coefficients. The ring operations in $\bH[z]$ are defined as in the commutative
case, but as multiplication in $\bH$ is not commutative, multiplication in $\bH[z]$ is not
commutative as well. For any $\alpha\in\bH$, we define left and right evaluation of
$f$ at $\alpha$ by
\begin{equation}
f^{\bl}(\alpha)=\sum_{j=0}^k\alpha^j f_j\quad\mbox{and}\quad
f^{\br}(\alpha)=\sum_{j=0}^k f_j\alpha^j\quad\mbox{if}\quad f(z)=\sum_{j=0}^k z^j f_j.
\label{2.10}
\end{equation}
The formulas make sense for matrix valued polynomials $f\in\bH^{n\times m}[z]$ and extend 
to square matrices by letting
\begin{equation}
f^{\bl}(A)=\sum_{j=0}^kA^j f_j,\quad
f^{\br}(B)=\sum_{j=0}^k f_j B^j\quad\mbox{if}\quad A\in\bH^{n\times n}, \; B\in \bH^{n\times n}.
\label{2.11}
\end{equation}
An element $\alpha\in\bH$ is called a left (right) zero of $f$ if $f^{\bl}(\alpha)=0$
(respectively, $f^{\br}(\alpha)=0$). For polynomials with real coefficients,
left and right evaluations (and therefore, the notions of left and right zeros) coincide.
The {\em characteristic polynomial} of a non-real conjugacy class $[\alpha]\subset\bH$
is defined by
\begin{equation}
\cX_{[\alpha]}(z)=(z-\alpha)(z-\overline{\alpha})=z^2-z(\alpha+\overline{\alpha})+|\alpha|^2;
\label{2.12}
\end{equation}
it follows from characterization \eqref{2.1} that formula \eqref{2.12}
does not depend on the choice of $\alpha\in [\alpha]$. Since $\cX_{[\alpha]}$ is the polynomial of the 
minimally possible 
degree such that its zero set (left and right, as $\cX\in{\mathbb R}[z]$) coincides with $[\alpha]$, it is also 
called the {\em minimal polynomial} of $[\alpha]$. Observe that the matrix $\cX_{[\alpha]}(B)$ is 
invertible
if and only if $[\alpha]\cap\sigma_{\bf r}(B)=\emptyset$.

\smallskip

Since the division algorithm holds in $\bH[z]$ on either side (see e.g., \cite{wed}), 
any (left or right) ideal in $\bH[z]$ is principal.  We will use notation $\langle h\rangle_{\bf r}$ and 
$\langle h\rangle_{\boldsymbol\ell}$ for respectively the right and the left ideal generated by $h$. An ideal is 
maximal if and only if it is generated by the polynomial 
\begin{equation}
\bp_\alpha(z)=z-\alpha,\qquad \alpha\in\bH
\label{2.13}  
\end{equation}
and it follows from respective (left and right) division algorithms that 
\begin{equation}
f\in \langle \bp_\alpha\rangle_{\bf r} \; \Leftrightarrow \; f^{\bl}(\alpha)=0,\qquad
f\in \langle \bp_\alpha\rangle_{\boldsymbol\ell} \; \Leftrightarrow \; f^{\br}(\alpha)=0.
\label{2.9e}  
\end{equation}
A left (right) ideal is called {\em irreducible} if it is not contained properly in two distinct left 
(right) ideals which occurs if and only if it is generated by a polynomial $p$ of the form 
$p=\bp_{\alpha_1}\bp_{\alpha_2}\cdots \bp_{\alpha_n}$
for some spherical chain $\balpha=(\alpha_1,\ldots,\alpha_n)$ (see e.g., \cite{bolalg}).

\subsection{Sylvester equations and interpolation by polynomials} 
The equation \eqref{1.10} arises naturally in the context of the following interpolation problem:
{\em given polynomials $p$, $\widetilde{p}$, $g$, $\widetilde{g}$, find a polynomial $f\in\bH[z]$ such
that
\begin{equation}
f-g \in \langle p\rangle_{\bf r}\quad\mbox{and}\quad f-\widetilde{g} \in\langle \widetilde{p}\rangle_{\boldsymbol\ell}.
\label{2.15}
\end{equation}}
If $p=\bp_\alpha$ and $q=\bp_\beta$, then due to equivalences \eqref{2.9e}, conditions 
\eqref{2.15} can be written in terms of left and right evaluations as 
\begin{equation}
f^{\bl}(\alpha)=\gamma\quad \mbox{and}\quad f^{\br}(\beta)=\delta
\label{2.8b}
\end{equation}
where $\gamma=h^{\bl}(\alpha)$ and $\delta=g^{\br}(\beta)$.
The following result appears in \cite{bol}.
\begin{theorem}
There is a polynomial $f\in\bH[z]$ satisfying conditions \eqref{2.8b}
if and only if the Sylvester equation
\begin{equation}
\alpha x-x\beta =\gamma-\delta
\label{2.9b}
\end{equation}
has a solution. If this is the case, all polynomials $f\in\bH[z]$ satisfying conditions \eqref{2.8b}
are parametrized by the formula
\begin{equation}
f=\gamma+\bp_\alpha x+\bp_\alpha h\bp_\beta \quad(=\delta+x\bp_{\beta}+\bp_\alpha h\bp_\beta)
\label{2.10b}
\end{equation}
when $x$ runs through the solution set of \eqref{2.9b} and $h$ varies in $\bH[z]$.
\label{T:2.1}
\end{theorem}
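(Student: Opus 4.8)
The plan is to translate each interpolation condition in \eqref{2.8b} into a one-sided factorization of $f$ and then to read the Sylvester equation \eqref{2.9b} off a single right-evaluation of a product. The backbone is the remainder form of the division algorithm (available on both sides) together with the ideal--evaluation dictionary \eqref{2.9e}.

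First I would record the elementary evaluation identities for the linear factors $\bp_\alpha$ and $\bp_\beta$. Writing $a(z)=\sum_j z^j a_j$ and using that $z$ commutes with quaternion coefficients, a direct expansion of $(z-\alpha)a(z)$ gives the product rule
$$(\bp_\alpha a)^{\br}(\beta)=a^{\br}(\beta)\,\beta-\alpha\,a^{\br}(\beta),$$
together with $(\bp_\alpha g)^{\bl}(\alpha)=0$ and $(w\bp_\beta)^{\br}(\beta)=0$ for all $g,w\in\bH[z]$ (the latter two being exactly the content of \eqref{2.9e}). The remainder theorem then lets me write, for any $f$, $f=\bp_\alpha a+f^{\bl}(\alpha)$ with remainder equal to the left value, and, for any $a$, $a=h\bp_\beta+a^{\br}(\beta)$ with remainder equal to the right value.

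The equivalence now falls out cleanly. For necessity, if $f$ satisfies \eqref{2.8b}, the first condition and the remainder theorem give $f=\gamma+\bp_\alpha a$; evaluating on the right at $\beta$ and invoking the product rule yields $\delta=\gamma+x\beta-\alpha x$ with $x:=a^{\br}(\beta)$, that is, $\alpha x-x\beta=\gamma-\delta$. Conversely, given a solution $x$ of \eqref{2.9b}, I would simply take the constant $a=x$, set $f=\gamma+\bp_\alpha x$, and verify directly that $f^{\bl}(\alpha)=\gamma$ and, using $\alpha x-x\beta=\gamma-\delta$, that $f^{\br}(\beta)=\delta$.

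For the parametrization \eqref{2.10b} I would push the same computation one step further: right-dividing the polynomial $a$ above by $\bp_\beta$ gives $a=h\bp_\beta+x$ with $x=a^{\br}(\beta)$ a Sylvester solution, so every admissible $f$ has the stated form $f=\gamma+\bp_\alpha x+\bp_\alpha h\bp_\beta$; conversely any such $f$ is admissible since $\bp_\alpha h\bp_\beta$ contributes $0$ to both evaluations. The equality of the two expressions in \eqref{2.10b} reduces to the identity $\gamma-\alpha x=\delta-x\beta$, which is precisely \eqref{2.9b} rewritten. I expect the only delicate point to be the order-respecting bookkeeping in the non-commutative product rule for $(\bp_\alpha a)^{\br}(\beta)$; once that identity is pinned down, everything else is formal.
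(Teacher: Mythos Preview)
Your argument is correct: the product rule $(\bp_\alpha a)^{\br}(\beta)=a^{\br}(\beta)\beta-\alpha\,a^{\br}(\beta)$ is verified by direct expansion, the two one-sided remainder theorems you invoke are exactly \eqref{2.9e}, and the parametrization follows by right-dividing the quotient $a$ by $\bp_\beta$ just as you say. The paper does not supply its own proof of this theorem---it is quoted from \cite{bol}---so there is nothing to compare against; your direct division-algorithm argument is the natural one and would be the expected proof.
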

Note that letting $h\equiv 0$ in \eqref{2.10b} gives all linear solutions to the problem \eqref{2.8b}
while letting $\gamma=\delta=0$ leads us to the explicit description
of the intersection $\langle \bp_\beta\rangle_{\boldsymbol\ell}\cap \langle \bp_\alpha\rangle_{\bf r}$
of two maximal ideals in $\bH[z]$. 

\smallskip

Let us now assume that the ideals $\langle p\rangle_{\bf r}$ and $\langle q\rangle_{\boldsymbol\ell}$ in 
\eqref{2.15} 
are irreducible, i.e., that $p$ and $q$ are of the form  
\begin{equation}
p=\bp_{\alpha_1}\bp_{\alpha_2}\cdots \bp_{\alpha_n},\quad
q=\bp_{\beta_m}\bp_{\beta_{m-1}}\cdots \bp_{\beta_1}
\label{2.19}
\end{equation}
for some spherical chains where $\balpha=(\alpha_1,\ldots,\alpha_n)$ and $\bbeta=(\beta_1,\ldots,\beta_m)$.
Let us introduce the matrices
\begin{equation}
\mathcal J_{{\balpha}}=\left[\begin{array}{ccccc} \alpha_1&0&\ldots&&0\\ 1
&\alpha_2&0&&\\ 0&1&\ddots&\ddots&\vdots\\
\vdots&\ddots&\ddots&\ddots&0\\ 0&\ldots &0&1&\alpha_n
\end{array}\right],\quad 
\mathcal J_{{\bbeta}}=\left[\begin{array}{ccccc} \beta_1&0&\ldots&&0\\ 1
&\beta_2&0&&\\ 0&1&\ddots&\ddots&\vdots\\
\vdots&\ddots&\ddots&\ddots&0\\ 0&\ldots &0&1&\beta_m
\end{array}\right]
\label{2.20}  
\end{equation}
and let $E_n\in\bH^{n\times 1}$ be the column with the top entry equal one and all other entries equal zero.
\begin{theorem}
Let $p$ and $q$ be as in  \eqref{2.19} and, given polynomials 
$g(z)=\sum g_jz^j$ and $\widetilde{g}(z)=\sum \widetilde{g}_kz^k$
with $\deg g<\deg p$ and $\deg \widetilde{g}<\deg \widetilde{p}$, let 
\begin{equation}
C=\sum_j \mathcal J_{{\balpha}}^jE_n g_jE_m^\top-\sum_k E_n \widetilde{g}_k E_m^\top \mathcal J_{{\bbeta}}^\top.
\label{2.12uu}  
\end{equation}
Then there exists a polynomial $f\in\bH[z]$ satisfying conditions \eqref{2.15}
if and only if the Sylvester equation
\begin{equation}
\mathcal J_{{\balpha}}X-X\mathcal J_{{\bbeta}}^\top=C
\label{2.9bb}
\end{equation}  
has a solution $X=\left[x_{ij}\right]\in\bH^{n\times m}$. If this is the case, all polynomials $f\in\bH[z]$ satisfying 
conditions \eqref{2.15} are parametrized by the formula
\begin{equation}
f=g+\sum_{j=1}^m p\cdot x_{n,1}\cdot \bp_{\beta_j}\bp_{\beta_{j-1}}\cdots \bp_{\beta_1} +p\cdot h\cdot \widetilde{p}
\label{2.10bb}
\end{equation}
when $\begin{bmatrix}x_{n,1}&\ldots&x_{n,m}\end{bmatrix}$ is the bottom row of an $X$ satisfying \eqref{2.9bb}
and $h$ varies in $\bH[z]$. The polynomial $f$ in \eqref{2.10bb} can be written in the form 
\begin{equation}
f=\widetilde{g}+\sum_{k=1}^n \bp_{\alpha_1}\bp_{\alpha_2}\cdots \bp_{\alpha_k}\cdot x_{k,m}\cdot \widetilde{p} 
+p\cdot h\cdot \widetilde{p}.
\label{2.10c}
\end{equation}
\label{T:2.2}
\end{theorem}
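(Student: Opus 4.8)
The plan is to trade the two one‑sided conditions in \eqref{2.15} for a single scalar identity in $\bH[z]$ in which the Sylvester operator appears directly, and then to read off \eqref{2.10bb} and \eqref{2.10c} as the two sides of that identity. Write $\pi_i:=\bp_{\alpha_1}\cdots\bp_{\alpha_i}$ and $\theta_j:=\bp_{\beta_j}\cdots\bp_{\beta_1}$ (with $\pi_0=\theta_0=1$, $\pi_n=p$, $\theta_m=q$), assemble the row $\mathbf P(z)=[\pi_0,\dots,\pi_{n-1}]\in\bH[z]^{1\times n}$ and the column $\mathbf Q(z)=[\theta_0,\dots,\theta_{m-1}]^\top\in\bH[z]^{m\times 1}$, and let $\mathbf e_n,\mathbf e_m$ be the columns with $1$ in the \emph{last} position (so $\mathbf e_n^\top$ selects the bottom row), as opposed to the top‑selectors $E_n,E_m$. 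The recursions $\pi_i=\pi_{i-1}\bp_{\alpha_i}$ and $\theta_j=\bp_{\beta_j}\theta_{j-1}$ become, after absorbing the top factors $\pi_n=p$ and $\theta_m=q$, the two intertwining identities
$$z\,\mathbf P(z)=\mathbf P(z)\,\mathcal J_{\balpha}+p(z)\,\mathbf e_n^\top,\qquad z\,\mathbf Q(z)=\mathcal J_{\bbeta}^\top\,\mathbf Q(z)+q(z)\,\mathbf e_m.$$
Iterating the first gives $\mathbf P(z)\mathcal J_{\balpha}^\ell E_n=z^\ell$ for $0\le \ell\le n-1$ (the correction $p\,\mathbf e_n^\top\mathcal J_\balpha^\ell E_n$ vanishes since $(\mathcal J_\balpha^\ell)_{n,1}=0$ there), whence the reconstruction identities $\mathbf P(z)\,g^{\bl}(\mathcal J_{\balpha})E_n=g(z)$ and, symmetrically, $E_m^\top\,\widetilde g^{\br}(\mathcal J_{\bbeta}^\top)\,\mathbf Q(z)=\widetilde g(z)$, valid because $\deg g<n$, $\deg\widetilde g<m$; also $\mathbf P(z)E_n=1=E_m^\top\mathbf Q(z)$. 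In this notation the matrix \eqref{2.12uu} is the rank‑two difference $C=\bigl(g^{\bl}(\mathcal J_{\balpha})E_n\bigr)E_m^\top-E_n\bigl(E_m^\top\widetilde g^{\br}(\mathcal J_{\bbeta}^\top)\bigr)$.

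The engine of both implications is the polynomial identity, valid for \emph{every} $X\in\bH^{n\times m}$,
$$\mathbf P(z)\bigl(\mathcal J_{\balpha}X-X\mathcal J_{\bbeta}^\top\bigr)\mathbf Q(z)=\bigl(\mathbf P(z)X\mathbf e_m\bigr)\,q(z)-p(z)\,\bigl(\mathbf e_n^\top X\mathbf Q(z)\bigr),$$
obtained by evaluating $z\cdot\mathbf P(z)X\mathbf Q(z)$ in two ways from the two intertwining identities ($z$ is central in $\bH[z]$, and $X$ rides between the two evaluations). I would prove this first. For \emph{sufficiency} assume $X$ solves \eqref{2.9bb} and define $f$ by \eqref{2.10bb}; each summand of $f-g$ has $p$ as a left factor, so $f-g\in\langle p\rangle_{\bf r}$ immediately. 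Substituting $\mathcal J_{\balpha}X-X\mathcal J_{\bbeta}^\top=C$ into the key identity and using the reconstruction identities together with $\mathbf P E_n=E_m^\top\mathbf Q=1$ collapses the left side to $\mathbf P(z)C\mathbf Q(z)=g(z)-\widetilde g(z)$; since $\mathbf P(z)X\mathbf e_m=\sum_{i}\pi_{i-1}x_{i,m}$ and $\mathbf e_n^\top X\mathbf Q(z)=\sum_{j}x_{n,j}\theta_{j-1}$, this rearranges to
$$g+\sum_{j=1}^m p\,x_{n,j}\,\theta_{j-1}=\widetilde g+\sum_{i=1}^n\pi_{i-1}\,x_{i,m}\,q,$$
so \eqref{2.10bb} and \eqref{2.10c} define the \emph{same} $f$, and the right‑hand form displays $q$ as a right factor of $f-\widetilde g$. (The specialization $n=m=1$ returns exactly the two forms in Theorem \ref{T:2.1}, which pins down the partial products as $\theta_{j-1}$ and $\pi_{i-1}$.)

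For \emph{necessity} and completeness, start from $f$ satisfying \eqref{2.15}: the memberships give $f=g+pu=\widetilde g+vq$, hence $vq-pu=g-\widetilde g$. The candidate $X$ reads its bottom row off from the expansion in the left basis $\{\theta_0,\dots,\theta_{m-1}\}$ of the $q$‑remainder of $u$, and its last column from the $\pi$‑expansion of the $p$‑remainder of $v$; the key identity then shows $\mathbf P C\mathbf Q$ comes out correctly. The interior entries are constrained by the two‑diagonal shape of \eqref{2.9bb}, which reads entrywise $\alpha_i x_{ij}-x_{ij}\beta_j=c_{ij}-x_{i-1,j}+x_{i,j-1}$ — a cascade of scalar Sylvester equations governed by Theorem \ref{T:1.1} — and existence of $f$ is precisely the compatibility that makes this cascade solvable. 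Letting $h\in\bH[z]$ vary and the bottom row range over all admissible choices then recovers every interpolant, the freedom matching the solution space of the homogeneous equation.

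I expect the main obstacle to be this last step: turning ``the cascade is compatible'' into a clean equivalence between solvability of \eqref{2.9bb} and existence of $f$, and proving that the map $(\text{bottom row of }X,\ h)\mapsto f$ is onto the solution set of \eqref{2.15} without redundancy. The delicacy is entirely non‑commutative bookkeeping: whether an interior equation $\alpha_i x-x\beta_j=(\cdots)$ is uniquely solvable or carries a constraint hinges on whether $P_{\alpha_i,\beta_j}\ne 0$, i.e.\ on whether the classes $[\alpha]$ and $[\beta]$ meet, which is exactly the split between the regular and the singular case of Section 4. I anticipate that the cleanest resolution is to prove directly, via the key identity, that $C$ lies in the range of $X\mapsto\mathcal J_{\balpha}X-X\mathcal J_{\bbeta}^\top$ if and only if $f$ exists, rather than tracking the entrywise recursion by hand.
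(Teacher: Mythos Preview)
The paper does not contain a proof of Theorem~\ref{T:2.2}: immediately after the statement it says ``The proof of the theorem along with reformulations of conditions \eqref{2.15} in terms of evaluation functionals is given in \cite{bolalg1}.'' So there is nothing in this paper to compare your argument against; the theorem is quoted here only to motivate the study of Sylvester equations with $A=\mathcal J_{\balpha}$ and $B=\mathcal J_{\bbeta}^\top$.

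That said, your approach via the intertwining identities $z\mathbf P=\mathbf P\mathcal J_{\balpha}+p\,\mathbf e_n^\top$ and $z\mathbf Q=\mathcal J_{\bbeta}^\top\mathbf Q+q\,\mathbf e_m$ and the resulting key identity
\[
\mathbf P(z)\bigl(\mathcal J_{\balpha}X-X\mathcal J_{\bbeta}^\top\bigr)\mathbf Q(z)=(\mathbf P X\mathbf e_m)\,q-p\,(\mathbf e_n^\top X\mathbf Q)
\]
is a clean and correct device; it delivers the sufficiency direction and the equality of the two representations \eqref{2.10bb}--\eqref{2.10c} essentially in one line. Your index check against Theorem~\ref{T:2.1} is right: the partial products that fall out of $\mathbf e_n^\top X\mathbf Q$ and $\mathbf P X\mathbf e_m$ are $\theta_{j-1}=\bp_{\beta_{j-1}}\cdots\bp_{\beta_1}$ and $\pi_{i-1}=\bp_{\alpha_1}\cdots\bp_{\alpha_{i-1}}$, which is what the $n=m=1$ case demands; the printed formulas \eqref{2.10bb}--\eqref{2.10c} appear to be off by one (and \eqref{2.10bb} has $x_{n,1}$ where $x_{n,j}$ is meant).

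Where your proposal is genuinely incomplete you have already diagnosed it yourself: the converse implication and the surjectivity of $(\text{bottom row of }X,\,h)\mapsto f$. Your key identity gives only $\mathbf P C\mathbf Q=g-\widetilde g$, a single scalar relation, whereas the Sylvester equation \eqref{2.9bb} is $nm$ scalar equations; so the identity alone cannot certify that a given $C$ of the form \eqref{2.12uu} is in the range of the Sylvester operator. To close this gap you really do need the structural input developed later in the paper: in the regular case ($[\alpha_1]\cap[\beta_1]=\emptyset$) the operator is invertible and there is nothing to check, while in the singular case the solvability criterion of Theorem~\ref{T:4.4} and the description of the kernel in Theorem~\ref{T:5.1} supply exactly the ``cascade compatibility'' you allude to and show that a solution $X$ is determined by its bottom row (respectively rightmost column), which is what makes the parametrizations \eqref{2.10bb}--\eqref{2.10c} exhaustive and irredundant. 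In short: your engine is right for one direction, but the other direction is not a bookkeeping matter and cannot be finished without invoking the Section~4--5 analysis (or reproving its content).
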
 
The latter theorem is an extension of Theorem \ref{T:2.1}. Formulas \eqref{2.10b} suggest to conjecture
that in more general setting of Theorem \ref{T:2.2}, all solutions $f$ to the interpolation problem
should be paprametrized by the ``whole" solutions $X$ to the equation \eqref{2.9bb} rather than by their  
bottom rows (in formula \eqref{2.10bb}) or the rightmost columns (in formula \eqref{2.10c}). However, 
there is no contradiction here: as we will see in Section 4, any solution $X$ to the equation 
\eqref{2.9bb} is completely determined by its bottom row or its rightmost column.

\smallskip

Letting $h\equiv 0$ in \eqref{2.10bb} (or in 
\eqref{2.10c}) gives all solutions $f$ to the  problem \eqref{2.15}  with $\deg f<\deg p+\deg \widetilde{p}$ 
while letting $g=\widetilde{g}\equiv0$ leads to fairly explicit description
of the intersection $\langle p\rangle_{\boldsymbol\ell}\cap \langle \widetilde{p}\rangle_{\bf r}$
of two irreducible ideals in $\bH[z]$. The case of general polynomials $p$ and $\widetilde{p}$ in \eqref{2.15} can 
be 
reduced to the irreducible case upon making use of the Primary Ideal Decomposition Theorem and gives rise to the 
Sylvester equation \eqref{1.10} with $A$ and $B$ in the block diagonal form with all diagonal blocks of the from 
\eqref{2.20}. The formulation of Theorem \ref{T:2.2} is presented here to explain our interest in the 
Sylvester equation with $A$ and $B$ of the form \eqref{2.20}. The proof of the theorem along with 
reformulations of conditions \eqref{2.15} in terms of evaluation functionals is given in \cite{bolalg1}.

\section{Quaternion Sylvester equation: the regular case} 
\setcounter{equation}{0}
Applying the map \eqref{2.4}
to the equation \eqref{1.10} we observe that for any solution $X\in\bH^{n\times m}$ of \eqref{1.10},
the matrix $Y=\varphi(X)\in\C^{2n\times 2m}$ solves the complex Sylvester equation
\begin{equation}
\varphi(A)Y-Y\varphi(B)=\varphi(C).
\label{3.1}   
\end{equation}
By the Sylvester's criterion and due to equivalence \eqref{2.6}, the latter equation 
has a unique solution if and only if $\sigma_{\bf r}(A)\cap \sigma_{\bf 
r}(B)=\emptyset$. If this is the case, we apply the map \eqref{2.7} to the equation \eqref{3.1}.
Due to properties \eqref{2.8}, \eqref{2.9} of $\psi$ we get $A\psi(Y)-\psi(Y)B=C$ so that
$X=\psi(Y)$ is a solution to \eqref{1.10}. It is a unique solution since $\phi$ is injective, so that 
distinct solutions to \eqref{1.10} would have given rise to distinct solutions of \eqref{3.1}.
We thus arrive at the following result (see e.g., \cite{huang}):
\begin{theorem}
Equation \eqref{1.10} has a unique solution (for every $C\in\bH^{n\times m}$) if and only if $\sigma_{\bf 
r}(A)\cap \sigma_{\bf r}(B)=\emptyset$.
\label{T:3.1}
\end{theorem}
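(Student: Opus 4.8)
The plan is to reduce the quaternion Sylvester equation \eqref{1.10} to the complex Sylvester equation \eqref{3.1} via the complex representation map $\varphi$ from \eqref{2.4}, exploit the classical Sylvester criterion on the complex side, and then transport the solution back using the left inverse $\psi$ from \eqref{2.7}. Concretely, first I would apply $\varphi$ to both sides of \eqref{1.10}; by the additivity and multiplicativity in \eqref{2.5}, any solution $X$ yields $Y=\varphi(X)$ solving \eqref{3.1} with $\varphi(A),\varphi(B),\varphi(C)$ the complex blocks.

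Next I would invoke the complex Sylvester theorem recalled in the Introduction: \eqref{3.1} has a unique solution $Y$ for every right-hand side precisely when $\sigma(\varphi(A))\cap\sigma(\varphi(B))=\emptyset$, and by the spectral equivalence \eqref{2.6} this is exactly the condition $\sigma_{\mathbf r}(A)\cap\sigma_{\mathbf r}(B)=\emptyset$. Assuming this holds, I would then apply $\psi$ to the unique solution $Y$ of \eqref{3.1}; using the mixed multiplicativity relations \eqref{2.9} together with $\psi(\varphi(A))=A$ from \eqref{2.8}, the equation $\varphi(A)Y-Y\varphi(B)=\varphi(C)$ is carried to $A\,\psi(Y)-\psi(Y)\,B=\psi(\varphi(C))=C$, so that $X=\psi(Y)$ solves \eqref{1.10}.

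The two directions then close as follows. For existence under the spectral hypothesis, the construction $X=\psi(Y)$ above suffices. For uniqueness, I would argue that $\varphi$ is injective (which is immediate since $\psi$ is a left inverse by \eqref{2.8}): two distinct solutions $X_1\neq X_2$ of \eqref{1.10} would produce distinct images $\varphi(X_1)\neq\varphi(X_2)$, both solving \eqref{3.1}, contradicting uniqueness on the complex side. For the converse (necessity), if $\sigma_{\mathbf r}(A)\cap\sigma_{\mathbf r}(B)\neq\emptyset$, then by \eqref{2.6} the complex spectra meet, so \eqref{3.1} fails to have a unique solution for some right-hand side; I would need to check that any defect on the complex side can be realized by a right-hand side of the special form $\varphi(C)$, i.e. that the ``block-structured'' complex equations coming from quaternion data already witness non-uniqueness or non-existence.

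The main obstacle I anticipate is precisely this last point: the forward reduction only shows that the quaternion problem injects into the complex problem, but the complex equation \eqref{3.1} could in principle be better behaved on the subspace $\{\varphi(C)\}$ of structured right-hand sides than on all of $\C^{2n\times 2m}$. The clean way around this is to note that $\psi$ maps \emph{every} solution of the structured complex equation back to a quaternion solution, so if some choice of $C$ made \eqref{1.10} have no solution or more than one, $\varphi$ would transfer this defect to \eqref{3.1}; thus uniqueness for all $C\in\bH^{n\times m}$ forces uniqueness of the complex equation on the structured subspace, which (because $G=\varphi(A)\otimes I-I\otimes\varphi(B)^\top$ is either invertible or not, independent of the right-hand side) is equivalent to invertibility of $G$ and hence to the disjointness of the complex spectra. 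The remaining verifications—that $\varphi$ respects the equation and that $\psi$ intertwines correctly—are the routine computations guaranteed by \eqref{2.5}, \eqref{2.8}, and \eqref{2.9}, and I would not belabor them.
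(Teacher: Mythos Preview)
Your approach coincides with the paper's: push \eqref{1.10} through $\varphi$ to the complex equation \eqref{3.1}, invoke the classical Sylvester criterion together with the spectral equivalence \eqref{2.6}, and pull the unique complex solution back via $\psi$ using \eqref{2.8}--\eqref{2.9}; uniqueness on the quaternion side then follows from the injectivity of $\varphi$. The paper argues only this sufficiency direction explicitly and otherwise refers to \cite{huang}.

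Your treatment of the converse, however, has a genuine logical gap. You correctly note that any defect of \eqref{1.10} for a given $C$ is transferred by $\varphi$ to a defect of \eqref{3.1} for the right-hand side $\varphi(C)$; but the contrapositive of that statement is ``\eqref{3.1} well-posed on structured right-hand sides $\Rightarrow$ \eqref{1.10} well-posed'', which is sufficiency again, not the necessity you are after. From ``\eqref{1.10} has a unique solution for every $C$'' you \emph{cannot} conclude that \eqref{3.1} has at most one solution for every structured right-hand side: two complex solutions $Y_1\neq Y_2$ of \eqref{3.1} with right-hand side $\varphi(C)$ may satisfy $\psi(Y_1)=\psi(Y_2)$, since $\psi$ is not injective, so no contradiction arises. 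What actually has to be shown is that a nonzero $Y$ in the kernel of $Y\mapsto\varphi(A)Y-Y\varphi(B)$ forces a nonzero quaternion solution of $AX=XB$. One quick repair: if $Y$ lies in that kernel then so does $iY$ (the map is $\C$-linear), and a block-by-block check from \eqref{2.7} shows that $\psi(Y)=0$ together with $\psi(iY)=0$ implies $Y=0$. Hence triviality of the quaternion kernel forces triviality of the complex kernel, $G$ is invertible, and \eqref{2.6} yields $\sigma_{\bf r}(A)\cap\sigma_{\bf r}(B)=\emptyset$.
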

As we have seen, the unique solution of the equation
\eqref{1.10} is necessarily of the form $X=\psi(Y)$ where $Y$ is the unique solution to the complex
Sylvester equation \eqref{3.1}. Taking any formula for $Y$ available in literature, 
one gets a formula for $X$ by letting $X=\psi(Y)$. For example, if we denote by $\mu: \, \C^{m\times n}\to
\C^{mn\times 1}$ the bijection assigning to the matrix $X=[x_{ij}]$ the 
the column ${\bf x}$ as in \eqref{1.2}, the formula for the unique solution $X$ of the equation
\eqref{1.10} suggested by the  original Sylvester's approach \cite{sylv} is 
\begin{equation}
X=\psi(\mu^{-1} ( (\varphi(A)\otimes I_{2m}-I_{2n}\otimes \varphi(B))^{-1}\mu(\phi(C))   )).
\label{2.m}
\end{equation}
The formula with minimal references to complex representations (extending the Jameson's result 
\cite{jameson}) has been established in \cite{huang, song}:
\begin{equation}
X=\left(\sum_{k=1}^{2n}\sum_{j=0}^{k-1}a_kA^jCB^{k-j-1}\right)\left(\sum_{j=0}^{2n} a_jB^j\right)^{-1},
\label{2.9u}
\end{equation}
where $a_0,\ldots,a_{2n}\in\C$ are the coefficients of the polynomial 
$$
\det (\lambda I_{2n}-\phi(A))=a_0+a_1\lambda+\ldots +a_{2n}\lambda^{2n}.
$$
In the rest of the section we examine how far one can advance making no use of complex representations
of quaternion matrices. We start with a very special case 
where $A$ and $B$ are Jordan blocks and establish the quaternion analog of the formula \eqref{1.5}.
On the other hand, this result generalizes Theorem \ref{T:1.1}. 
\begin{theorem}
Let $A$ and $B$ be Jordan blocks $A=\mathcal J_n(\alpha)$ and
$B=\mathcal J^\top_{m}(\beta)$ for some $\alpha\not\sim\beta$. Then the equation 
\eqref{1.10} has a unique solution 
\begin{equation}
X=\sum_{k=0}^{n+m-2}\left(\sum_{i=0}^{k+1}(-1)^{k+i}\binom{k+1}{i}\overline{\alpha}^{k-i+1}
M_k\beta^i\right)P_{\alpha,\beta}^{-k-1},
\label{3.4}   
\end{equation}
where $P_{\alpha,\beta}$ is given by \eqref{1.11} and 
$$
M_k=\sum_{\ell=0}^k(-1)^{\ell}\binom{k}{\ell}F_n^{k-\ell} C(F_m^\top)^\ell\quad\mbox{for}\quad k=0,\ldots,n+m-2.
$$
\label{T:3.2}
\end{theorem}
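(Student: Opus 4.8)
The plan is to read \eqref{1.10} with $A=\mathcal J_n(\alpha)$ and $B=\mathcal J_m^\top(\beta)$ as a single linear equation on $\bH^{n\times m}$ and to invert the associated operator by splitting off an invertible part from a nilpotent part. Writing $L_\gamma$ and $R_\gamma$ for the operators of left and right multiplication by $\gamma$ (a quaternion or a matrix), and substituting $A=\alpha I_n+F_n$, $B=\beta I_m+F_m^\top$ from \eqref{1.3}, the equation becomes $\mathcal S(X)=C$ with $\mathcal S=\mathcal D+\mathcal N$, where $\mathcal D(X)=\alpha X-X\beta$ and $\mathcal N(X)=F_nX-XF_m^\top$. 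First I would record the two structural facts that make this splitting work: $\mathcal D$ is invertible and $\mathcal N$ is nilpotent.

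Invertibility of $\mathcal D$ is exactly where Theorem \ref{T:1.1} enters. Since $\mathcal D$ acts entrywise via the scalar map $x\mapsto\alpha x-x\beta$, it is invertible precisely when $P_{\alpha,\beta}\neq0$ in \eqref{1.11}; and because $P_{\alpha,\beta}=|\alpha|^2-2\,\mathrm{Re}(\alpha)\beta+\beta^2$ is a real quadratic in $\beta$ whose $\bH$-roots form the class $[\alpha]$, the condition $P_{\alpha,\beta}\neq0$ is equivalent to the hypothesis $\alpha\not\sim\beta$ through \eqref{2.1}. Theorem \ref{T:1.1} then gives the entrywise inverse $\mathcal D^{-1}(Y)=(\overline\alpha Y-Y\beta)P_{\alpha,\beta}^{-1}$. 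For $\mathcal N=L_{F_n}-R_{F_m^\top}$, the two summands commute (left and right multiplications always do), so the binomial theorem yields $\mathcal N^k(C)=\sum_{\ell=0}^k(-1)^\ell\binom{k}{\ell}F_n^{k-\ell}C(F_m^\top)^\ell=M_k$, exactly the matrix in the statement, and $\mathcal N^{n+m-1}=0$ since each term then contains $F_n^n=0$ or $(F_m^\top)^m=0$.

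The third ingredient is that $\mathcal D$ and $\mathcal N$ commute: the entries of $F_n,F_m^\top$ are real, so left (resp.\ right) multiplication by them commutes with left scalar multiplication by $\alpha$ (resp.\ right scalar multiplication by $\beta$). Hence $\mathcal D^{-1}$ also commutes with $\mathcal N$, the operator $I+\mathcal D^{-1}\mathcal N$ is a nilpotent perturbation of the identity and therefore invertible, and $\mathcal S=\mathcal D(I+\mathcal D^{-1}\mathcal N)$ is invertible — giving existence and uniqueness of $X$ at once. Expanding the geometric series (equivalently, iterating $X=\mathcal D^{-1}(C)-\mathcal D^{-1}\mathcal N(X)$ exactly $n+m-1$ times, in parallel with the passage from \eqref{1.4} to \eqref{1.5}) gives $\mathcal S^{-1}=\sum_{k=0}^{n+m-2}(-1)^k\mathcal D^{-(k+1)}\mathcal N^k$, so that $X=\sum_{k=0}^{n+m-2}(-1)^k\mathcal D^{-(k+1)}(M_k)$.

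It remains to expand $\mathcal D^{-(k+1)}(M_k)$, and here is where the noncommutative bookkeeping must be handled with care. Rewriting the entrywise inverse as $\mathcal D^{-1}=(L_{\overline\alpha}-R_\beta)R_{P_{\alpha,\beta}^{-1}}$ and using that $P_{\alpha,\beta}$, being a real polynomial in $\beta$, commutes with $\beta$ — so that $R_{P_{\alpha,\beta}^{-1}}$ commutes with both $L_{\overline\alpha}$ and $R_\beta$ — I get $\mathcal D^{-(k+1)}=(L_{\overline\alpha}-R_\beta)^{k+1}R_{P_{\alpha,\beta}^{-(k+1)}}$. Since $L_{\overline\alpha}$ and $R_\beta$ commute, a second binomial expansion gives $(L_{\overline\alpha}-R_\beta)^{k+1}(M_k)=\sum_{i=0}^{k+1}(-1)^i\binom{k+1}{i}\overline\alpha^{\,k-i+1}M_k\beta^i$; multiplying on the right by $P_{\alpha,\beta}^{-(k+1)}$ and combining the outer sign $(-1)^k$ with $(-1)^i$ into $(-1)^{k+i}$ reproduces the inner sum of \eqref{3.4}, the scalar factor $P_{\alpha,\beta}^{-k-1}$ coming out to the right as it is independent of $i$. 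The only genuine obstacle is thus ensuring that every scalar used to collect terms is either real (the entries of $F_n,F_m^\top$ and the coefficients $|\alpha|^2,\ 2\,\mathrm{Re}(\alpha)$) or is $P_{\alpha,\beta}$, which is central with respect to $\beta$; these are precisely the commutation facts that legitimize both binomial expansions and the extraction of $P_{\alpha,\beta}^{-(k+1)}$, yielding \eqref{3.4}.
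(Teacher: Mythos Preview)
Your proof is correct and follows essentially the same approach as the paper: both split the Sylvester operator into the invertible ``diagonal'' part $\mathcal D(X)=\alpha X-X\beta$ (inverted via Theorem~\ref{T:1.1}) and the nilpotent shift part $\mathcal N(X)=F_nX-XF_m^\top$, then iterate using the commutation of $\mathcal D$ with $\mathcal N$ and of $P_{\alpha,\beta}$ with $\beta$. The paper carries out the iteration concretely on the equation $XP_{\alpha,\beta}=(\overline\alpha\,\cdot\,-\,\cdot\,\beta)(C-\mathcal N(X))$, while you package the same computation as a Neumann series $\mathcal S^{-1}=\sum_{k}(-1)^k\mathcal D^{-(k+1)}\mathcal N^{k}$; the content is identical.
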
 
\begin{proof} 
Observe that
$P_{\alpha,\beta}=\cX_{[\alpha]}(\beta)$ (the value of the characteristic polynomial \eqref{2.12}
at $\beta$) and thus, $P_{\alpha,\beta}\neq 0$ if and only if $\alpha\not\sim\beta$.
As $A=\alpha I_n+F_n$ and  $B=\beta I_n +F_m^\top$ (see \eqref{1.3}), the equation 
\eqref{1.10} takes the form 
\begin{equation}
\alpha X-X\beta=C-F_n X+XF_m^\top.
\label{3.5}
\end{equation}
We now subtract the latter equation multiplied by $\beta$ on the right from the same equation multiplied by 
$\overline{\alpha}$ on the left:
$$
|\alpha|^2X-(\overline{\alpha}+\alpha)X\beta+X\beta^2=\overline{\alpha}(C-F_n X+XF_m^\top)-
(C-F_n X+XF_m^\top)\beta.
$$
Since $|\alpha|^2$ and $(\alpha+\overline{\alpha})$ are real and therefore commute with all quaternions, 
the latter equality can be written as 
\begin{equation}
XP_{\alpha,\beta}=\overline{\alpha}(C-F_n X+XF_m^\top)-
(C-F_n X+XF_m^\top)\beta,
\label{2.12u}
\end{equation}
in view of \eqref{1.11}. Thus, equation \eqref{2.12u} follows from \eqref{3.5}.
On the other hand, we may subtract the equation \eqref{2.12u} multiplied by $\beta$ on the right from the 
same equation multiplied by $\alpha$ on the left:
$$
\alpha XP_{\alpha,\beta}-XP_{\alpha,\beta}\beta=(C-F_n X+XF_m^\top)P_{\alpha,\beta}.
$$
Since $P_{\alpha,\beta}\neq 0$ commutes with $\beta$, we may cancel $P_{\alpha,\beta}$ in the latter equation
arriving at \eqref{3.5}. Therefore, equations \eqref{3.5} and \eqref{2.12u} are equivalent. 
Since $P_{\alpha,\beta}\beta=\beta P_{\alpha,\beta}$ and since the matrices 
$F_n$ and $F_m$ are real, we can iterate the equality \eqref{2.12u} as follows:
\begin{align*}
XP_{\alpha,\beta}^{r}=&(\overline{\alpha}C-C\beta)P_{\alpha,\beta}^{r-1}-
\overline{\alpha}F_nXP_{\alpha,\beta}^{r-1}
+\overline{\alpha}XP_{\alpha,\beta}^{r-1}F_m^\top\\
&+F_n XP_{\alpha,\beta}^{r-1}\beta-XP_{\alpha,\beta}^{r-1}F_m^\top \beta\\
=&(\overline{\alpha}C-C\beta)P_{\alpha,\beta}^{r-1}
-\overline{\alpha}^2(F_nC-CF_m^\top)P_{\alpha,\beta}^{r-2}\\
&+2\overline{\alpha}(F_nC-CF_m^\top)\beta P_{\alpha,\beta}^{r-2}-(F_nC-CF_m^\top)\beta^2P_{\alpha,\beta}^{r-2}\\
&+F_n(
\overline{\alpha}^2(F_n X-XF_m^\top)-\overline{\alpha}(F_n X-XF_m^\top)\beta)P_{\alpha,\beta}^{r-2}\\
&-(\overline{\alpha}^2(F_n X-XF_m^\top)-\overline{\alpha}(F_n X-XF_m^\top)\beta)  P_{\alpha,\beta}^{r-2}F_m^\top\\
&-F_n (\overline{\alpha}(F_n X-XF_m^\top)\beta-(F_n X-XF_m^\top)\beta^2)P_{\alpha,\beta}^{r-2}\\
&+(\overline{\alpha}(F_n X-XF_m^\top)\beta-(F_n X-XF_m^\top)\beta^2)P_{\alpha,\beta}^{r-2}F_m^\top.
\end{align*}
Continuing this iteration and letting $r=m+n-1$, we get after $r-1$ steps
\begin{equation}
XP_{\alpha,\beta}^{m+n-1}=\sum_{k=0}^{r-2}\left(\sum_{i=0}^{k+1}(-1)^{k+i}\binom{k+1}{i}\overline{\alpha}^{k-i+1}
M_k\beta^i\right)P_{\alpha,\beta}^{m+n-k-2}+R,
\label{3.7}
\end{equation}
where $R$ contains the terms containing factors $F_n^\ell$ and $(F_m^\top)^j$ of total degree 
$\ell+j\ge m+n-1$. Thus, either $\ell\ge n$ in which case $F_n^\ell=0$, or $j\ge m$ in which case 
$(F_m^\top)^j=0$. Hence all terms in $R$ are zero matrices (i.e., $R=0$), and formula \eqref{3.4}
follows immediately from \eqref{3.7}. Since \eqref{2.12u} is equivalent to \eqref{3.5}, each iteration
of \eqref{2.12u} (and in particular, the formula \eqref{3.4}) is equivalent to \eqref{3.5}.
Therefore, $X$ of the form \eqref{3.4} is a solution to \eqref{3.5}. The uniqueness of a solution is evident.
\end{proof}
In Theorems \ref{T:3.2a}, \ref{T:3.8} and \ref{T:3.6}, all restrictions on $B$ will be removed. 
The next theorem settles the case where $A=\mathcal J_{\balpha}$ is of the form \eqref{2.20} with
all diagonal entries be in the same conjugacy class. To formulate the theorem, let us note that 
for the matrices
\begin{equation}
A=\left[\begin{array}{ccccc} \alpha_1&0&\ldots&&0\\ 1
&\alpha_2&0&&\\ 0&1&\ddots&\ddots&\vdots\\
\vdots&\ddots&\ddots&\ddots&0\\ 0&\ldots &0&1&\alpha_n
\end{array}\right],\;
A^\prime=\left[\begin{array}{ccccc} \overline{\alpha}_1&0&\ldots&&0\\ -1
&\overline{\alpha}_2&0&&\\ 0&-1&\ddots&\ddots&\vdots\\
\vdots&\ddots&\ddots&\ddots&0\\ 0&\ldots &0&-1&\overline{\alpha}_n\end{array}\right],
\label{3.4a}
\end{equation}
based on $\alpha_1,\ldots,\alpha_n$ from the same conjugacy class, we have
\begin{equation}
A+A^\prime=(\alpha_1+\overline{\alpha}_1)I_n\quad\mbox{and}\quad
A^\prime A=|\alpha_1|^2 I_n-\widetilde{A},
\label{3.4b}
\end{equation}
where $\widetilde{A}=\left[(\alpha_i-\overline{\alpha}_{i+1})\delta_{i,j+1}+\delta_{i,j+2}\right]_{i,j=1}^n$;
more explicitly:
\begin{equation}
\widetilde{A}=\left[\begin{array}{cccccc} 0 &0&\ldots&&\ldots&0\\
\alpha_1-\overline{\alpha}_2&0&&&&\\ 1 &\alpha_2-\overline{\alpha}_3&\ddots&\ddots&\ddots&\vdots\\
0&1& \ddots&\ddots&\ddots&\vdots \\
\vdots&\ddots&\ddots&\ddots&0&0\\ 0&\ldots &0&1&\alpha_{n-1}-\overline{\alpha}_n& 
0\end{array}\right].
\label{3.4c}
\end{equation}
Relations \eqref{3.4b} follow from characterization \eqref{2.1}, due to which 
$|\alpha_i|^2=|\alpha_1|^2$ and ${\rm Re} \alpha_i={\rm Re} \alpha_1$ for $i=2,\ldots,n$.
\begin{theorem}
Let $\alpha_1,\ldots,\alpha_n$ be the elements from the same conjugacy class $V\subset\bH$,
let the matrices $A$, $A^\prime$ and $\widetilde{A}$ be defined as in \eqref{3.4a}, \eqref{3.4c},
and let $B\in\bH^{m\times m}$ be such that   
$V\cap\sigma_{\bf r}(B)=\emptyset$. Then the equation \eqref{1.10} has a unique solution
\begin{equation}
X=\sum_{k=0}^{n-1}\widetilde{A}^{k}\left(A^\prime C-CB\right)(\cX_V(B))^{-k-1},
\label{3.4e}
\end{equation}
where $\cX_V$ is the characteristic polynomial of the conjugacy class $V$.
\label{T:3.2a}
\end{theorem}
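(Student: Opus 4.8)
The plan is to lift the scalar computation behind Theorem \ref{T:3.2} to the matrix level, with the matrix $A'$ playing the role that $\overline{\alpha}$ played there, and with the two algebraic relations recorded in \eqref{3.4b} doing the work of \eqref{1.11}. Write $s=\alpha_1+\overline{\alpha}_1$ and $p=|\alpha_1|^2$; both are \emph{real} by \eqref{2.1}, so they commute with every quaternion matrix, and $\cX_V(z)=z^2-sz+p$. At the outset I would record two facts: since $V\cap\sigma_{\bf r}(B)=\emptyset$, the matrix $\cX_V(B)$ is invertible (the remark following \eqref{2.12}); and since $A$ is lower triangular with all diagonal entries in $V$, we have $\sigma_{\bf r}(A)=V$, so $\sigma_{\bf r}(A)\cap\sigma_{\bf r}(B)=\emptyset$ and Theorem \ref{T:3.1} already guarantees that \eqref{1.10} has a \emph{unique} solution. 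It therefore suffices to show that any solution must coincide with \eqref{3.4e}.

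To produce the governing identity I would start from $AX-XB=C$, multiply it by $A'$ on the left and, separately, by $B$ on the right, and subtract the two, obtaining
\begin{equation*}
A'AX-A'XB-AXB+XB^2=A'C-CB.
\end{equation*}
Here the two mixed terms collapse because $A'+A=sI_n$ gives $-A'XB-AXB=-sXB=-X(sB)$, while $A'A=pI_n-\widetilde{A}$ gives $A'AX=pX-\widetilde{A}X$. Collecting the terms that act on $X$ from the right and using that $s,p$ are real, the left-hand side becomes $X(B^2-sB+pI_m)-\widetilde{A}X=X\cX_V(B)-\widetilde{A}X$. Thus every solution satisfies
\begin{equation*}
X\cX_V(B)-\widetilde{A}X=A'C-CB.
\end{equation*}
The one genuinely thoughtful step is the choice of multipliers: one must hit the equation with $A'$ on the \emph{left} and $B$ on the \emph{right} precisely so that $A'XB$ and $AXB$ recombine through $A+A'=sI_n$; this is the matrix incarnation of the "left by $\overline\alpha$, right by $\beta$" maneuver in the proof of Theorem \ref{T:3.2}, and I expect it to be the main point requiring care.

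The remainder is a clean iteration. Using invertibility of $\cX_V(B)$ I would rewrite the identity as $X=\widetilde{A}X(\cX_V(B))^{-1}+(A'C-CB)(\cX_V(B))^{-1}$ and substitute this expression for $X$ back into the $\widetilde{A}X$ term repeatedly. Because $\widetilde{A}$ acts on the left and $(\cX_V(B))^{-1}$ on the right, the two actions associate freely, and after $k$ substitutions one reaches
\begin{equation*}
X=\widetilde{A}^{\,k}X(\cX_V(B))^{-k}+\sum_{\ell=0}^{k-1}\widetilde{A}^{\,\ell}(A'C-CB)(\cX_V(B))^{-\ell-1}.
\end{equation*}
The matrix $\widetilde{A}$ in \eqref{3.4c} is strictly lower triangular, hence nilpotent with $\widetilde{A}^{\,n}=0$, so the choice $k=n$ annihilates the first term and leaves exactly \eqref{3.4e}.

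To close, I would assemble the pieces: the derivation shows that every solution of \eqref{1.10} satisfies the displayed identity, and the iteration shows that the identity has at most one solution, namely \eqref{3.4e}; since Theorem \ref{T:3.1} provides a solution and guarantees its uniqueness, that solution is precisely \eqref{3.4e}. This route needs no separate verification that the closed form is a solution, although one could alternatively argue reversibility directly, noting that $A$ commutes with both $A'$ and $\widetilde{A}$ (as $A'=sI_n-A$ and $\widetilde{A}=pI_n-A'A$) and that $\cX_V(B)$, being a real-coefficient polynomial in $B$, commutes with $B$.
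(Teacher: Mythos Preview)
Your proof is correct and follows essentially the same route as the paper's: multiply the Sylvester equation by $A'$ on the left and by $B$ on the right, subtract to obtain $X\cX_V(B)=\widetilde{A}X+A'C-CB$, and then iterate using the nilpotency of $\widetilde{A}$ together with the invertibility of $\cX_V(B)$. Your explicit appeal to Theorem~\ref{T:3.1} for existence is a small refinement over the paper's writeup, which (like your iteration) strictly speaking only forces any solution to equal~\eqref{3.4e}.
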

\begin{proof} We multiply equation \eqref{1.10} by $A^\prime$ on the left and by $B$ on the right 
$$
A^\prime AX-A^\prime XB=A^\prime C,\qquad AXB-XB^2=CB,
$$
and then subtract the second equality from the first. Due to \eqref{3.4b}, we get
$$
|\alpha_1|^2 X-\widetilde{A}X-(\alpha_1+\overline{\alpha}_1)XB+XB^2=A^\prime C-CB,
$$
which can be written in terms of the characteristic polynomial \eqref{2.2} as 
\begin{equation}
X\cX_V(B)=A^\prime C-CB+\widetilde{A}X.
\label{3.4d}
\end{equation}
We now iterate \eqref{3.4d} as in the proof of Theorem \ref{T:3.2}:
\begin{align*}
X(\cX_V(B))^{n}=&(A^\prime C-CB)(\cX_V(B))^{n-1}+\widetilde{A}X(\cX_V(B))^{n-1}\\
=&(A^\prime C-CB)(\cX_V(B))^{n-1}+\widetilde{A}(A^\prime C-CB)(\cX_V(B))^{n-2}\\
&+\widetilde{A}^2X(\cX_V(B))^{n-2}=\ldots \\
=&\sum_{k=0}^{n-1}\widetilde{A}^{k}\left(A^\prime C-CB\right)(\cX_V(B))^{n-k-1}+\widetilde{A}^{n}X.
\end{align*}
The last equality implies \eqref{3.4e} since $\widetilde A^n=0$ (see \eqref{3.4c}) and $\cX_V(B)$
is invertible, since $V\cap\sigma_{\bf r}(B)=\emptyset$.
\end{proof}
We next remove the assumption that all diagonal entries in $A$ are in the same conjugacy class.
In this case, we get explicit formulas for the rows of a unique solution $X$.  
In what follows, we will use the noncommutative product notation
\begin{equation}
\prod_{i=1}^{\substack{\curvearrowright \\ k}}\gamma_i:=\gamma_1\gamma_2\cdots\gamma_k,
\quad\mbox{and}\quad \prod_{i=1}^{\substack{\curvearrowleft \\ k}}\gamma_i:=\gamma_k\cdots
\gamma_2\gamma_1
\label{feb10}
\end{equation}

\begin{theorem}
Let $A=\mathcal J_{\balpha}\in\bH^{n\times n}$ be defined as in \eqref{2.20}, let $B\in\bH^{m\times m}$ 
be such that $\sigma_{\bf r}(B)\cap [\alpha_k]=\emptyset$ for $k=1,\ldots,n$, and let $\widetilde{C}_j$ 
denote the $j$-th row of the matrix $C\in\bH^{n\times m}$. Then the equation \eqref{1.10} has a unique 
solution
\begin{equation}
X={\rm Col}_{1\le k\le n}\widetilde{X}_k,
\label{3.8}
\end{equation}
with the rows given by
\begin{equation}
\widetilde{X}_k=-\sum_{j=1}^k\left(\bp_{\overline{\alpha}_k}\bp_{\overline{\alpha}_{k-1}}\cdots\bp_{\overline{\alpha}_j}
\widetilde{C}_j\right)^{\br}(B)
\cdot \prod_{i=j}^{\substack{\curvearrowright \\ k}}(\cX_{[\alpha_i]}(B))^{-1},
\label{2.21}
\end{equation}
for $k=1,\ldots,n$, where the polynomials $\bp_{\overline{\alpha}_i}$ and $\cX_{[\alpha_i]}$ are defined via 
formulas \eqref{2.13} and \eqref{2.12}, respectively.
\label{T:3.8}
\end{theorem}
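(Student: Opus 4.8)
The plan is to exploit the bidiagonal structure of $A=\mathcal J_{\balpha}$ to split equation \eqref{1.10} into a triangular system of $n$ coupled equations, one per row of $X$, and then to solve these recursively from the top down. Writing $\widetilde{X}_k$ for the $k$-th row of $X$ and reading off the $k$-th row of $AX-XB=C$ (recall $A$ has $\alpha_k$ on the diagonal and $1$'s on the subdiagonal), I obtain, with the convention $\widetilde{X}_0=0$,
\begin{equation*}
\alpha_k\widetilde{X}_k-\widetilde{X}_kB=\widetilde{C}_k-\widetilde{X}_{k-1},\qquad k=1,\ldots,n.
\end{equation*}
Each of these is a $B$-matrix analogue of the scalar equation of Theorem \ref{T:1.1}, with a right-hand side $\widetilde{C}_k-\widetilde{X}_{k-1}$ that is known once the previous row has been found; thus the whole theorem reduces to solving one such equation and then unrolling the recursion.

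For the single equation $\alpha Y-YB=D$ with $Y,D\in\bH^{1\times m}$ and $[\alpha]\cap\sigma_{\bf r}(B)=\emptyset$, I repeat the manipulation from the proof of Theorem \ref{T:3.2}: multiply on the left by $\overline{\alpha}$, multiply on the right by $B$, and subtract. Since $\alpha+\overline{\alpha}$ and $|\alpha|^2$ are real and hence commute with the entries of $Y$ and with $B$, the two mixed terms combine and one is left with $Y\cX_{[\alpha]}(B)=\overline{\alpha}D-DB$, where $\cX_{[\alpha]}$ is the characteristic polynomial \eqref{2.12}. This step is reversible (multiply the reduced identity by $\alpha$ on the left and by $B$ on the right, subtract, and cancel the central factor $\cX_{[\alpha]}(B)$, using reality once more), so the two equations are equivalent, which yields both existence and uniqueness of the solution. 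Because $\cX_{[\alpha]}(B)$ is invertible precisely when $[\alpha]\cap\sigma_{\bf r}(B)=\emptyset$ (the remark following \eqref{2.12}), that solution is
\begin{equation*}
Y=(\overline{\alpha}D-DB)\,\cX_{[\alpha]}(B)^{-1}=-\bigl(\bp_{\overline{\alpha}}D\bigr)^{\br}(B)\,\cX_{[\alpha]}(B)^{-1},
\end{equation*}
the second equality being the unwinding of the right evaluation \eqref{2.11} of the degree-one polynomial $\bp_{\overline{\alpha}}D=zD-\overline{\alpha}D$.

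It remains to show that unrolling the recursion produces the closed form \eqref{2.21}, which I do by induction on $k$. The key bookkeeping identity is that for any $h\in\bH^{1\times m}[z]$ one has $(\bp_{\overline{\alpha}_k}h)^{\br}(B)=h^{\br}(B)B-\overline{\alpha}_k\,h^{\br}(B)$; that is, prepending the linear factor $\bp_{\overline{\alpha}_k}$ before right-evaluating at $B$ is the same as applying the linear map $W\mapsto WB-\overline{\alpha}_kW$ to the already-evaluated matrix. Feeding the inductive formula for $\widetilde{X}_{k-1}$ into the right-hand side $\widetilde{C}_k-\widetilde{X}_{k-1}$ of the $k$-th equation, the term $\widetilde{C}_k$ yields the $j=k$ summand $-(\bp_{\overline{\alpha}_k}\widetilde{C}_k)^{\br}(B)\cX_{[\alpha_k]}(B)^{-1}$, while the term $-\widetilde{X}_{k-1}$, after the map $W\mapsto WB-\overline{\alpha}_kW$ and right multiplication by $\cX_{[\alpha_k]}(B)^{-1}$, reproduces the summands with $j<k$. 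Here I use that each factor $\cX_{[\alpha_i]}(B)$ is a real-coefficient polynomial in $B$, hence commutes with $B$ and with all the other such factors, so the map $W\mapsto WB-\overline{\alpha}_kW$ slides past the product $\prod_{i=j}^{\substack{\curvearrowright \\ k-1}}\cX_{[\alpha_i]}(B)^{-1}$ and merely converts the polynomial $\bp_{\overline{\alpha}_{k-1}}\cdots\bp_{\overline{\alpha}_j}$ into $\bp_{\overline{\alpha}_k}\bp_{\overline{\alpha}_{k-1}}\cdots\bp_{\overline{\alpha}_j}$, while the newly appended $\cX_{[\alpha_k]}(B)^{-1}$ lengthens the product \eqref{feb10} from upper limit $k-1$ to $k$. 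Collecting the $j=k$ summand with these gives exactly \eqref{2.21}; uniqueness is automatic from the recursion (or, alternatively, $\sigma_{\bf r}(A)=\bigcup_k[\alpha_k]$ is disjoint from $\sigma_{\bf r}(B)$, so Theorem \ref{T:3.1} applies).

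The main obstacle I anticipate is the non-commutative bookkeeping in the last paragraph: making sure that the operation ``prepend $\bp_{\overline{\alpha}_k}$ and right-evaluate at $B$'' genuinely commutes past the accumulated product of inverse characteristic polynomials and correctly lengthens both the chain of $\bp$-factors and the chain of $\cX$-factors at once. This hinges entirely on the reality of the coefficients of $\cX_{[\alpha_i]}$, which makes each $\cX_{[\alpha_i]}(B)$ central among polynomials in $B$; once that is pinned down, the remainder is a routine, if careful, induction.
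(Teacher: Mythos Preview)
Your proof is correct and follows essentially the same route as the paper. The paper packages your two key steps as separate lemmas: the single-equation inversion $Y=-(\bp_{\overline{\alpha}}D)^{\br}(B)\,\cX_{[\alpha]}(B)^{-1}$ is stated as Remark~\ref{R:3.5}, and your ``bookkeeping identity'' $(\bp_{\overline{\alpha}_k}h)^{\br}(B)=h^{\br}(B)B-\overline{\alpha}_k\,h^{\br}(B)$ is the special case $f=\bp_{\overline{\alpha}_k}$ of the product rule $(fg)^{\br}(B)=(f\cdot g^{\br}(B))^{\br}(B)$ recorded as Remark~\ref{R:3.4}; the row decomposition, the induction, and the use of reality of the $\cX_{[\alpha_i]}$ coefficients to slide past the accumulated inverses are identical.
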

The proof is based on the following two observations.
\begin{remark}
Let $f$ and $g$ be two quaternion matrix polynomials and let $B$ be a square matrix. Then 
\begin{equation}
(fg)^{\br}(B)=\left(f\cdot g^{\br}(B)\right)^{\br}(B).
\label{2.31}
\end{equation}
\label{R:3.4}
\end{remark}
Indeed, since evaluations \eqref{2.11} are linear, we have
\begin{align*}
\left(f\cdot g^{\br}(B)\right)^{\br}(B)&=(f\cdot\sum_j g_jB^j)^{\br}(B)\\
&=\sum_kf_k(\sum_j g_jB^j)B^k\\
&=\sum_i(\sum_{j+k=i}f_kg_j)B_i=(fg)^{\br}(B).
\end{align*}
\begin{remark}
Let $\bp_\alpha$ be given by \eqref{2.13} for a fixed $\alpha\in\bH$ and let
\begin{equation}
\left(\bp_\alpha D\right)^{\br}(B)=M
\label{2.30}
\end{equation}
for some $D,M\in\bH^{1\times m}$ and 
$B\in\bH^{m\times m}$ such that $\sigma_{\bf r}(B)\cap[\alpha]=\emptyset$. Then
\begin{equation}
D=\left(\bp_{\overline\alpha}M\right)^{\br}(B)\cdot (\cX_{[\alpha]}(B))^{-1}.
\label{2.32}
\end{equation}
\label{R:3.5} 
\end{remark}
Indeed, applying \eqref{2.31} to $f=\bp_{\overline{\alpha}}$ and $g=\bp_\alpha D$ and taking into account 
equalities
$\bp_{\overline{\alpha}}\bp_{\alpha}=\cX_{[\alpha]}$ and \eqref{2.30}, we get
$$
(\cX_{[\alpha]}D)^{\br}(B)=\left(\bp_{\overline{\alpha}} \cdot (\bp_\alpha D)^{\br}(B)\right)^{\br}(B)=
\left(\bp_{\overline{\alpha}}M\right)^{\br}(B).
$$
Since $\cX_{[\alpha]}$ is a polynomial with real coefficients, we have
$$
(\cX_{[\alpha]}D)^{\br}(B)=(D\cX_{[\alpha]})^{\br}(B)=D\cX_{[\alpha]}(B),
$$
and \eqref{2.32} follows from the two latter equalities, once we recall that the 
matrix $\cX_{[\alpha_1]}(B)$ is invertible (as $[\alpha]\cap \sigma_{\bf r}(B)=\emptyset$).
\medskip

\begin{proof}[Proof of Theorem \ref{T:3.8}]
Equating the corresponding rows in the equation \eqref{1.10} we get, due to 
\eqref{2.20} and \eqref{3.8}, the system
$$
\alpha_1\widetilde{X}_1-\widetilde{X}_1B=\widetilde{C}_1,\quad 
\alpha_k\widetilde{X}_k-\widetilde{X}_kB=\widetilde{C}_k-\widetilde{X}_{k-1} \; \; (k=2,\ldots,n),
$$
which is equivalent to \eqref{1.10}. The latter equalities can be written in terms of right evaluations as 
follows:
\begin{equation}
\left(\bp_{\alpha_1}\widetilde{X}_1\right)^{\br}(B)=-\widetilde{C}_1,\quad 
\left(\bp_{\alpha_k}\widetilde{X}_k\right)^{\br}(B)=\widetilde{X}_{k-1}-\widetilde{C}_k \; \; 
(k=2,\ldots,n).
\label{2.22}
\end{equation}
Making use of Remark \ref{R:3.5}, we solve the leftmost equation in \eqref{2.22}:
$$
\widetilde{X}_1=-\left(\bp_{\overline{\alpha}_1}\widetilde{C}_1\right)^{\br}(B)\cdot(\cX_{[\alpha_1]}(B))^{-1},
$$ 
confirming formula \eqref{2.21} for $\widetilde{X}_1$. Similarly, we solve the $k$-th 
equation  in \eqref{2.4} for $\widetilde{X}_k$:
\begin{equation}
\widetilde{X}_k=\left(\bp_{\overline{\alpha}_k}(\widetilde{X}_{k-1}-\widetilde{C}_k)
\right)^{\br}(B)\cdot(\cX_{[\alpha_k]}(B))^{-1}.
\label{2.23}  
\end{equation}
Assuming that the formula \eqref{2.21} holds for $\widetilde{X}_{k-1}$, we plug it into \eqref{2.23}:
\begin{align*}
\widetilde{X}_k=&-\left(\bp_{\overline{\alpha}_k}\widetilde{C}_k\right)^{\br}(B)\cdot(\cX_{[\alpha_k]}(B))^{-1}\\
&-\left(\bp_{\overline{\alpha}_k}
\sum_{j=1}^{k-1}\left(\bp_{\overline{\alpha}_{k-1}}\cdots\bp_{\overline{\alpha}_j}
\widetilde{C}_j\right)^{\br}(B)\cdot 
\prod_{i=j}^{\substack{\curvearrowright \\ k-1}}(\cX_{[\alpha_i]}(B))^{-1}
\right)^{\br}(B)\\
&\quad \cdot(\cX_{[\alpha_k]}(B))^{-1}\\
=&-\left(\bp_{\overline{\alpha}_k}\widetilde{C}_k\right)^{\br}(B)\cdot(\cX_{[\alpha_k]}(B))^{-1}\\
&-\sum_{j=1}^{k-1} \left(\bp_{\overline{\alpha}_k}\bp_{\overline{\alpha}_{k-1}}\cdots\bp_{\overline{\alpha}_j}
\widetilde{C}_j\right)^{\br}(B)\cdot 
\prod_{i=j}^{\substack{\curvearrowright \\ k}}(\cX_{[\alpha_i]}(B))^{-1}\\
=&-\sum_{j=1}^k\left(\bp_{\overline{\alpha}_k}\cdots\bp_{\overline{\alpha}_j}
\widetilde{C}_j\right)^{\br}(B) \cdot \prod_{i=j}^{\substack{\curvearrowright \\ 
k}}(\cX_{[\alpha_i]}(B))^{-1}
\end{align*}
and the induction argument completes the proof of formulas \eqref{2.21}. Note that the second equality
in the last calculation followed by Remark \ref{R:3.4} applied to polynomials 
$f=\bp_{\alpha_k}$ and 
$g=\bp_{\overline{\alpha}_{k-1}}\cdots\bp_{\overline{\alpha}_j}\widetilde{C}_j$ and since the 
characteristic polynomial $\cX_{[\alpha_i]}$ is in $\R[z]$ for $i=1,\ldots,n$.
\end{proof}
\begin{remark}
If the matrix $A=\mathcal J_{\balpha}$ is based on the elements $\alpha_1,\ldots\alpha_n$ from the same conjugacy 
class $V$, then formulas \eqref{2.21} simplify to
$$
\widetilde{X}_k=-\sum_{j=1}^k\left(\bp_{\overline{\alpha}_k}\bp_{\overline{\alpha}_{k-1}}\cdots
\bp_{\overline{\alpha}_j}\widetilde{C}_j\right)^{\br}(B)
\cdot (\cX_{V})(B))^{j-k-1}.
$$
\label{R:3.6}
\end{remark}
Making use of Remark \ref{R:3.5}, one can get the formula for the unique solution 
of the equation \eqref{1.10} in the case where $A$ is lower triangular.
\begin{theorem}
Let $A=\left[\alpha_{i,j}\right]_{i,j=1}^n$ be a lower triangular matrix ($\alpha_{i,j}=0$ for $i<j$)
and let $B\in\bH^{m\times m}$ be such that $\sigma_{\bf r}(B)\cap [\alpha_{k,k}]=\emptyset$ for 
$k=1,\ldots,n$. Then the equation \eqref{1.10} has a unique solution $X$ of the form \eqref{3.8}
with the rows defined recursively by 
\begin{align}
\widetilde{X}_1=&-\left(\bp_{\overline{\alpha}_{1,1}}\widetilde{C}_1\right)^{\br}(B)
\cdot(\cX_{[\alpha_{1,1}]}(B))^{-1},\notag\\
\widetilde{X}_{k}=&-\left(\bp_{\overline{\alpha}_{k,k}}\widetilde{C}_k\right)^{\br}(B)
\cdot(\cX_{[\alpha_{k,k}]}(B))^{-1}\label{3.16
}\\
&+\sum_{j=1}^{k-1}\left(\bp_{\overline{\alpha}_{k,k}}\alpha_{k,j}\widetilde{X}_j\right)^{\br}(B)
\cdot(\cX_{[\alpha_{k,k}]}(B))^{-1}\quad\mbox{for}\quad k=2,\ldots,n.\notag
\end{align}
\label{T:3.6}
\end{theorem}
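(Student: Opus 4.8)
The plan is to follow the row-by-row strategy from the proof of Theorem \ref{T:3.8}, now using only the lower-triangularity of $A$ in place of its two-diagonal structure. Writing $X$ and $C$ in terms of their rows as in \eqref{3.8}, I would equate the $k$-th rows on both sides of \eqref{1.10}. Since $A=\left[\alpha_{i,j}\right]$ is lower triangular, the $k$-th row of $AX$ equals $\sum_{j=1}^{k}\alpha_{k,j}\widetilde{X}_j$, while the $k$-th row of $XB$ is $\widetilde{X}_kB$. Thus the system equivalent to \eqref{1.10} is
\[
\alpha_{k,k}\widetilde{X}_k-\widetilde{X}_kB=\widetilde{C}_k-\sum_{j=1}^{k-1}\alpha_{k,j}\widetilde{X}_j,\qquad k=1,\ldots,n,
\]
with the convention that the sum is empty for $k=1$. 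Exactly as in \eqref{2.22}, I would rewrite the left-hand side as a right evaluation via $\alpha_{k,k}\widetilde{X}_k-\widetilde{X}_kB=-\left(\bp_{\alpha_{k,k}}\widetilde{X}_k\right)^{\br}(B)$, so that the $k$-th equation reads
\[
\left(\bp_{\alpha_{k,k}}\widetilde{X}_k\right)^{\br}(B)=\sum_{j=1}^{k-1}\alpha_{k,j}\widetilde{X}_j-\widetilde{C}_k.
\]

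The key step is to apply Remark \ref{R:3.5} with $\alpha=\alpha_{k,k}$, $D=\widetilde{X}_k$, and $M=\sum_{j=1}^{k-1}\alpha_{k,j}\widetilde{X}_j-\widetilde{C}_k$; the hypothesis $\sigma_{\bf r}(B)\cap[\alpha_{k,k}]=\emptyset$ is precisely what makes $\cX_{[\alpha_{k,k}]}(B)$ invertible, as recorded after \eqref{2.12}. Remark \ref{R:3.5} then gives
\[
\widetilde{X}_k=\left(\bp_{\overline{\alpha}_{k,k}}\Bigl(\sum_{j=1}^{k-1}\alpha_{k,j}\widetilde{X}_j-\widetilde{C}_k\Bigr)\right)^{\br}(B)\cdot(\cX_{[\alpha_{k,k}]}(B))^{-1},
\]
and splitting off the $\widetilde{C}_k$-term by linearity of the right evaluation reproduces exactly the recursion for the rows $\widetilde{X}_k$ asserted in the statement. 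The essential observation is that the right-hand side above involves only $\widetilde{X}_1,\ldots,\widetilde{X}_{k-1}$, so these identities determine the rows one at a time, starting from $\widetilde{X}_1$.

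Because the computation is a forward substitution, existence and uniqueness follow together: at each stage $\widetilde{X}_k$ is forced to equal the displayed expression, so any solution must have these rows (uniqueness), while the rows defined by this recursion satisfy every row equation by construction and hence assemble into a genuine solution of \eqref{1.10} (existence). I anticipate no real obstacle: the argument is a direct specialization of Theorem \ref{T:3.8} to an arbitrary lower-triangular $A$, and the only points requiring care are the correct reading of the $k$-th row of $AX$ off the triangular pattern and the translation of the matrix identity into right-evaluation form, both of which are routine given Remarks \ref{R:3.4} and \ref{R:3.5}.
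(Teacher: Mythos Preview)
Your proposal is correct and follows essentially the same approach as the paper: equate rows in \eqref{1.10} using the lower-triangular structure of $A$, rewrite each row equation as a right-evaluation identity $\left(\bp_{\alpha_{k,k}}\widetilde{X}_k\right)^{\br}(B)=\sum_{j=1}^{k-1}\alpha_{k,j}\widetilde{X}_j-\widetilde{C}_k$, and then invoke Remark~\ref{R:3.5} to solve for $\widetilde{X}_k$. The paper's own proof is simply a terser version of exactly this argument.
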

\begin{proof} Equating the corresponding rows in the equation \eqref{1.10} we get, due to the lower 
triangular structure of 
$A$,
the equations which can be written in terms of right evaluations as 
$$\left(\bp_{\alpha_1}\widetilde{X}_1\right)^{\br}(B)=-\widetilde{C}_1,\quad
\left(\bp_{\alpha_k}\widetilde{X}_k\right)^{\br}(B)=-\widetilde{C}_k+\sum_{j=1}^{k-1}\alpha_{k,j}
\widetilde{X}_{j}
$$
for $k=2,\ldots,n$. The rest follows by Remark \ref{R:3.5}.
\end{proof}
To keep the presentation symmetric we conclude with the ``column" version of the last theorem.
\begin{theorem}
Let $B=\left[\beta_{i,j}\right]_{i,j=1}^m$ be an upper triangular matrix ($\beta_{i,j}=0$ for $i>j$)
and let $A\in\bH^{n\times n}$ be such that $\sigma_{\bf r}(A)\cap [\beta_{k,k}]=\emptyset$ for
$k=1,\ldots,m$ and let $C=\begin{bmatrix}C_1 & \ldots & C_m\end{bmatrix}$. 
Then the equation \eqref{1.10} has a unique solution $X=\begin{bmatrix}
X_1 & \ldots & X_m\end{bmatrix}$ with the columns defined recursively by
\begin{align}
X_1=&(\cX_{[\beta_{1,1}]}(A))^{-1}\cdot
\left(C_1\bp_{\overline{\beta}_{1,1}}\right)^{\bl}(A),\notag\\
X_{k}=&(\cX_{[\beta_{k,k}]}(A))^{-1}\cdot
\left(C_k{\overline{\beta}_{k,k}}\right)^{\bl}(A)\label{3.17}\\
&+(\cX_{[\beta_{k,k}]}(A))^{-1}\sum_{j=1}^{k-1}
\left(X_j\beta_{j,k}\bp_{\overline{\beta}_{k,k}}\right)^{\bl}(A)
\quad\mbox{for}\quad k=2,\ldots,m.\notag   
\end{align}   
\label{T:3.7} 
\end{theorem}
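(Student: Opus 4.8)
The plan is to prove Theorem \ref{T:3.7} as the exact mirror image of Theorem \ref{T:3.6}, exchanging the roles of rows/columns, left/right evaluations, and the factors sitting on the left versus the right. First I would equate the corresponding \emph{columns} on both sides of \eqref{1.10}. Writing $X=\begin{bmatrix}X_1 & \ldots & X_m\end{bmatrix}$ and using the upper triangular structure $\beta_{i,j}=0$ for $i>j$, the $k$-th column of $XB$ is $\sum_{j=1}^{k}X_j\beta_{j,k}$, so the $k$-th column of the equation $AX-XB=C$ reads
\begin{equation}
AX_k-\sum_{j=1}^{k}X_j\beta_{j,k}=C_k,\qquad k=1,\ldots,m.\notag
\end{equation}
Isolating the diagonal term $X_k\beta_{k,k}$ and moving the strictly-upper contributions to the right gives, for each $k$, a scalar-type Sylvester equation of the form $AX_k-X_k\beta_{k,k}=C_k+\sum_{j=1}^{k-1}X_j\beta_{j,k}$, which I would rewrite as a \emph{left} evaluation: $\bigl(X_k\bp_{\beta_{k,k}}\bigr)^{\bl}(A)=-C_k-\sum_{j=1}^{k-1}X_j\beta_{j,k}$, exactly the column analog of the system appearing at the start of the proof of Theorem \ref{T:3.6}.

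The second step is to invoke the left-handed mirror of Remark \ref{R:3.5}. Remark \ref{R:3.5} solves $\left(\bp_\alpha D\right)^{\br}(B)=M$ for $D$ on the right; here I need to solve $\left(D\bp_{\beta}\right)^{\bl}(A)=M$ for $D$ on the left, and the resulting formula is $D=(\cX_{[\beta]}(A))^{-1}\cdot\left(M\bp_{\overline\beta}\right)^{\bl}(A)$, valid precisely when $\cX_{[\beta]}(A)$ is invertible, i.e. when $[\beta]\cap\sigma_{\bf r}(A)=\emptyset$. This is guaranteed by the hypothesis $\sigma_{\bf r}(A)\cap[\beta_{k,k}]=\emptyset$. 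The derivation parallels the one given after Remark \ref{R:3.5}: one applies the left analog of Remark \ref{R:3.4} with the factorization $\bp_{\overline\beta}\bp_\beta=\cX_{[\beta]}$ and exploits that $\cX_{[\beta]}\in\R[z]$ commutes, so that $(D\cX_{[\beta]})^{\bl}(A)=\cX_{[\beta]}(A)D$. Applying this to the equation for $X_1$ with $M=-C_1$ yields the base case $X_1=(\cX_{[\beta_{1,1}]}(A))^{-1}\left(C_1\bp_{\overline\beta_{1,1}}\right)^{\bl}(A)$ after distributing $C_1\bp_{\overline\beta_{1,1}}$, matching \eqref{3.17}.

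The third step is the recursion: applying the same left-solve to the $k$-th equation with $M=-C_k-\sum_{j=1}^{k-1}X_j\beta_{j,k}$ produces
\begin{equation}
X_k=(\cX_{[\beta_{k,k}]}(A))^{-1}\Bigl(\bigl(C_k\bp_{\overline\beta_{k,k}}\bigr)^{\bl}(A)+\sum_{j=1}^{k-1}\bigl(X_j\beta_{j,k}\bp_{\overline\beta_{k,k}}\bigr)^{\bl}(A)\Bigr),\notag
\end{equation}
which is exactly \eqref{3.17} (the leading term being $\bigl(C_k\overline\beta_{k,k}\bigr)^{\bl}(A)$ only if one expands $C_k\bp_{\overline\beta_{k,k}}$; I would double-check that the stated leading term in \eqref{3.17} is the correctly expanded constant-plus-linear contribution rather than a typo, and present it in the $\bp_{\overline\beta_{k,k}}$ form for consistency with the $X_1$ case). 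Existence and uniqueness follow because each $X_k$ is determined from $X_1,\ldots,X_{k-1}$ by an invertible left-multiplication, and the whole system is equivalent to \eqref{1.10}; uniqueness is also guaranteed abstractly by Theorem \ref{T:3.1}, since $\sigma_{\bf r}(A)\cap\sigma_{\bf r}(B)=\emptyset$ (the right spectrum of the triangular $B$ being contained in $\bigcup_k[\beta_{k,k}]$).

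The only genuine obstacle I anticipate is bookkeeping rather than conceptual: making sure the noncommutative order is respected when transposing everything from the row/right-evaluation setting to the column/left-evaluation setting. In particular one must verify the left analog of Remark \ref{R:3.4}, namely $(gf)^{\bl}(A)=\bigl(g^{\bl}(A)\cdot f\bigr)^{\bl}(A)$, and confirm that $\cX_{[\beta]}$ pulls out of left evaluation on the correct side. Once that mirror lemma is in hand, the proof is a direct transcription of the proof of Theorem \ref{T:3.6}, and I would keep it short by simply stating "the rest follows by the left-sided version of Remark \ref{R:3.5}," as the author does.
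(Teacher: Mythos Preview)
Your approach is correct and complete: equating columns, rewriting each column equation as a left evaluation $(X_k\bp_{\beta_{k,k}})^{\bl}(A)=C_k+\sum_{j<k}X_j\beta_{j,k}$ (note the sign should be positive here, since $(D\bp_\beta)^{\bl}(A)=AD-D\beta$ already matches $AX_k-X_k\beta_{k,k}$ directly), and then solving via the left-handed mirror of Remark~\ref{R:3.5}. Your verification of the left analog of Remark~\ref{R:3.4}, and the observation that $\cX_{[\beta]}\in\R[z]$ forces $(D\cX_{[\beta]})^{\bl}(A)=\cX_{[\beta]}(A)D$, are exactly what is needed. Your suspicion that the leading term in \eqref{3.17} should read $\bigl(C_k\bp_{\overline\beta_{k,k}}\bigr)^{\bl}(A)$ rather than $\bigl(C_k\overline\beta_{k,k}\bigr)^{\bl}(A)$ is well founded.

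The paper, however, takes a different and somewhat slicker route: instead of re-deriving left-sided versions of Remarks~\ref{R:3.4} and~\ref{R:3.5}, it passes to the adjoint equation $B^*X^*-X^*A^*=-C^*$ (equation \eqref{3.18a}). Since $B^*$ is lower triangular, Theorem~\ref{T:3.6} applies verbatim with $(A,B,C)$ replaced by $(B^*,A^*,-C^*)$, giving recursive formulas for the rows of $X^*$. One then takes adjoints of those formulas and invokes the identity \eqref{3.18}, namely $\bigl(\bp_\alpha C\bigr)^{\br}(B)=\bigl[(C^*\bp_{\overline\alpha})^{\bl}(B^*)\bigr]^*$, to convert right evaluations into left evaluations and arrive at \eqref{3.17}. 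The advantage of the paper's approach is economy: no new lemmas are stated, and Theorem~\ref{T:3.6} is reused wholesale. The advantage of your direct approach is transparency: one sees explicitly how the column recursion arises, and one does not need to track how adjoints interact with evaluations and with the characteristic polynomial $\cX_{[\beta]}$.
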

\begin{proof} Let us observe that the left and right evaluations \eqref{2.11} are related as follows: for any 
$\alpha\in\bH$, $D\in\bH^{1\times m}$ and $B\in\bH^{m\times m}$, 
\begin{equation}
\left(\bp_{\alpha}C\right)^{\br}(B)=CB-\alpha 
C=\left(B^*C^*-C^*\overline{\alpha}\right)^*=\left[\left(C^*\bp_{\overline{\alpha}}\right)^{\bl}(B^*)\right]^*.
\label{3.18}  
\end{equation}
Taking adjoints \eqref{1.10} we get the equation 
\begin{equation}
B^*X^*-X^*A^*=-C^*,
\label{3.18a}
\end{equation}
and since the matrix $B^*$ is lower triangular, we can apply Theorem \ref{T:3.6} (with $A$, $B$, $C$ replaced 
by $B^*$, $A^*$ and $-C^*$, respectively) to get recursive formulas for the rows of the matrix $X^*$.
Taking adjoints in these formulas and making use of relations \eqref{3.18}, we get \eqref{3.17}.
\end{proof}
To conclude, we remark that in case $B={\mathcal J}_{\bbeta}^\top$ where $\mathcal J_{\bbeta}^\top$ is of the 
form \eqref{2.20}, the recursion  \eqref{3.17} can be solved to produce formulas
$$
X_k=\sum_{j=1}^k \prod_{i=j}^{\substack{\curvearrowleft \\ k}}(\cX_{[\beta_i]}(A))^{-1}
\cdot \left(C_j\bp_{\overline{\beta}_j}\bp_{\overline{\beta}_{j+1}}\cdots\bp_{\overline{\beta}_k}\right)^{\bl}(A)
\quad (1\le 1\le m).
$$

\section{Singular case: the solvability criterion} 
\setcounter{equation}{0}

In this section, we consider the Sylvester equation \eqref{1.10} in the case where 
$\sigma_{\bf r}(A)\cap\sigma_{\bf r}(B)\neq \emptyset$. At certain level, this case also
can be handled by making use of complex representation of quaternion matrices: one can 
pass from \eqref{1.10} to the equivalent complex Sylvester equation \eqref{3.1}, then
use any avaliable method (e.g., Sylvester's tensor-product approach recalled at the beginning of the paper) 
to describe all its solutions, and then claim that the formula
$X=\psi(Y)$ describes all solutions to the Sylvester equation
\eqref{1.10} when $Y$ runs through the set of all solutions to the equation \eqref{3.1}. 
This approach is not quite satisfactory, partly because the map $\psi$ \eqref{2.7} is not injective.
Presumably, some quantative results still can be obtained on this way: 
to define the number of independent conditions (scalar equalities) 
which are necessary and sufficient for the equation \eqref{1.10} to have a solution along with the 
number of independent free parameters needed to parametrize the solution set.
In this section we will obtain more definitive results of this sort (explicit solvability conditions
and parametrization of all solutions) in the case where the matrices 
\begin{equation}
A={\mathcal J}_{\balpha}=
\left[\alpha_i\delta_{i,j}+\delta_{i,j+1}\right]_{i,j=1}^n,\qquad
B={\mathcal J}_{\bbeta}^\top=\left[\beta_i\delta_{i,j}+\delta_{i+1,j}\right]_{i,j=1}^m
\label{4.1}  
\end{equation}
are based on spherical chains $\balpha=(\alpha_1,\ldots,\alpha_n)$ and 
$\bbeta=(\beta_1,\ldots,\beta_m)$. We assume without loss of generality 
that $n\ge m$ and start with the scalar equation  
\begin{equation}
\alpha x-x\beta =c,\quad \mbox{where}\quad \alpha\sim\beta.
\label{4.2}
\end{equation}
Let us recal that a unit element $I\in\bH$ is purely imaginary if and only if $I^2=-1$. Therefore, the 
characterization \eqref{2.1} can be reformulated as follows: {\em $\alpha\sim\beta$ if and only if
$\alpha$ and $\beta$ can be written as}
\begin{equation}
\alpha=x+y I,\quad \beta=x+y \widetilde{I} \qquad (x\in\R, \; y>0, \; I^2=\widetilde{I}^2=-1).
\label{4.3}
\end{equation}
Since $\bH$ is a (four-dimensional) vector space over $\R$, we may define
orthogonal complements with respect to the usual euclidean metric in $\R^4$.
For $\alpha$ and $\beta$ as in \eqref{4.3}, we define the plane (the two-dimensional
subspace of $\bH\cong\R^4$) $\Pi_{\alpha,\beta}$ via the formula
\begin{equation}
\Pi_{\alpha,\beta}=\left\{\begin{array}{lll}
span \{1,I\}=\{u+vI: \, u,v\in\R\},&\mbox{if}& \beta=\alpha,\\
\left(span \{1,I\}\right)^\perp,&\mbox{if}& \beta=\overline{\alpha},\\
span \{I+\widetilde{I}, \;  1-I\widetilde{I}\},&\mbox{if}& \beta\neq \alpha,\overline{\alpha}.
\end{array}\right.
\label{4.4}
\end{equation}
Since $\overline{\alpha}=x-y I$, it follows that
$\Pi_{\overline{\alpha},\overline{\alpha}}=\Pi_{\alpha,\alpha}$,
$\Pi_{\overline{\alpha},\alpha}=\Pi_{\alpha, \overline{\alpha}}$ and
\begin{equation}
\Pi_{\overline{\alpha},\beta}=span \{I-\widetilde{I}, \;  1+I\widetilde{I}\}\quad
\mbox{if}\quad\beta\neq \alpha,\overline{\alpha}.
\label{4.5}  
\end{equation}
\begin{lemma}[\cite{bol}]   
Given $\alpha\sim\beta$, the solution set of the homogeneous Sylvester equation $\alpha x=x\beta$ 
coincides with
$\Pi_{\alpha,\beta}$ given in \eqref{4.4}. Furthermore, the non-homogeneous equation \eqref{4.2} has a 
solution if and only if $\overline{\alpha}c=c\beta$ (equivalently, $c\in\Pi_{\overline{\alpha},\beta}$) 
in which case  the solution set  
is the affine plane
$$
(2{\rm Im}(\alpha))^{-1}c+\Pi_{\alpha,\beta}=-c(2{\rm Im}(\beta))^{-1}+\Pi_{\alpha,\beta}.
$$
\label{L:6.2}
\end{lemma}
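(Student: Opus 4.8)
The plan is to pass to the real-linear picture. Using the normal form \eqref{4.3}, write $\alpha=x_0+yI$ and $\beta=x_0+y\widetilde{I}$ with $x_0\in\R$, $y>0$ and $I^2=\widetilde{I}^2=-1$. Since the real part $x_0$ commutes with everything and cancels, the homogeneous equation $\alpha x=x\beta$ reduces to $Ix=x\widetilde{I}$, and the inhomogeneous equation \eqref{4.2} reduces to $Ix-x\widetilde{I}=y^{-1}c$. Regarding $\bH$ as the Euclidean space $\R^4$, both equations are governed by the single $\R$-linear operator $T(x)=Ix-x\widetilde{I}$; I denote by $L_I$ and $R_{\widetilde{I}}$ the operators of left multiplication by $I$ and right multiplication by $\widetilde{I}$, so that $T=L_I-R_{\widetilde{I}}$.

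For the homogeneous statement I would first record the structural properties of $T$. Since $\overline{I}=-I$ and $\overline{\widetilde{I}}=-\widetilde{I}$, the identity $\langle L_a u,v\rangle=\langle u,L_{\overline a}v\rangle$ and its right analogue show that $L_I$ and $R_{\widetilde{I}}$ are skew-adjoint, hence so is $T$; in particular ${\rm rank}\,T$ is even. Next I would check by direct multiplication that the two generators listed in each case of \eqref{4.4} solve $Ix=x\widetilde{I}$: in the generic case $\beta\neq\alpha,\overline{\alpha}$ one has $I(I+\widetilde{I})=-1+I\widetilde{I}=(I+\widetilde{I})\widetilde{I}$ and $I(1-I\widetilde{I})=I+\widetilde{I}=(1-I\widetilde{I})\widetilde{I}$, and these two vectors are $\R$-independent because $1-I\widetilde{I}$ has nonzero real part $1+\langle I,\widetilde{I}\rangle>0$ while $I+\widetilde{I}$ is purely imaginary and nonzero. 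Thus $\dim\ker T\ge 2$. On the other hand $T\neq 0$, since $Ix=x\widetilde{I}$ cannot hold for every $x$ (otherwise $x=1$ forces $\widetilde{I}=I$ and then $I$ would be central), so ${\rm rank}\,T\ge 2$, i.e.\ $\dim\ker T\le 2$. Evenness of the rank forces $\dim\ker T=2$ exactly, so the two explicit solutions span $\ker T=\Pi_{\alpha,\beta}$. The degenerate cases $\beta=\alpha$ and $\beta=\overline{\alpha}$, where $Ix=x\widetilde{I}$ becomes $Ix=xI$ or $Ix=-xI$, are handled the same way and recover $\mathrm{span}\{1,I\}$ and $(\mathrm{span}\{1,I\})^{\perp}$.

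Solvability then follows from skew-adjointness: ${\rm Im}\,T=(\ker T^{\top})^{\perp}=(\ker T)^{\perp}=\Pi_{\alpha,\beta}^{\perp}$, so \eqref{4.2} is solvable if and only if $y^{-1}c\in\Pi_{\alpha,\beta}^{\perp}$, equivalently (as $y>0$) if and only if $c\in\Pi_{\alpha,\beta}^{\perp}$. It remains to identify $\Pi_{\alpha,\beta}^{\perp}$ with $\Pi_{\overline{\alpha},\beta}$. Using $I\widetilde{I}=-\langle I,\widetilde{I}\rangle+I\times\widetilde{I}$ together with $|I\times\widetilde{I}|^2=1-\langle I,\widetilde{I}\rangle^2$ and the orthogonality of $I\times\widetilde{I}$ to both $I$ and $\widetilde{I}$, I would verify that the four inner products between the generators of $\Pi_{\alpha,\beta}$ in \eqref{4.4} and those of $\Pi_{\overline{\alpha},\beta}$ in \eqref{4.5} all vanish; since both planes are two-dimensional in $\R^4$, they are orthogonal complements. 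Finally, applying the already-proved homogeneous statement to the conjugate pair $\overline{\alpha}\sim\beta$ shows $c\in\Pi_{\overline{\alpha},\beta}$ if and only if $\overline{\alpha}c=c\beta$, which is the asserted solvability criterion.

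For the solution set, once \eqref{4.2} is solvable the general solution is any particular solution plus $\ker T=\Pi_{\alpha,\beta}$. I would verify that $x_p=(2{\rm Im}\,\alpha)^{-1}c$ is a particular solution: writing $2{\rm Im}\,\alpha=\alpha-\overline{\alpha}$ and using the solvability condition in the form $c\beta=\overline{\alpha}c$ gives $\alpha x_p-x_p\beta=\tfrac12 c+\tfrac12 c=c$. Symmetrically, rewriting the condition as $(2{\rm Im}\,\alpha)c=-c(2{\rm Im}\,\beta)$, i.e.\ $Ic=-c\widetilde{I}$, shows that $-c(2{\rm Im}\,\beta)^{-1}$ is also a particular solution, so the two displayed affine planes coincide. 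I expect the main obstacle to be bookkeeping rather than conceptual: the operator facts (skew-adjointness, even rank, and ${\rm Im}\,T=(\ker T)^{\perp}$) do the real work uniformly, but one must still carry out the three-case verification of the explicit spans in \eqref{4.4}--\eqref{4.5} and the orthogonality computation identifying $\Pi_{\alpha,\beta}^{\perp}=\Pi_{\overline{\alpha},\beta}$.
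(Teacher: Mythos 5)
Your argument is correct, and it is worth noting that the paper itself offers no proof of Lemma \ref{L:6.2} --- it is imported verbatim from \cite{bol} --- so there is no in-paper argument to measure yours against. What you propose is a clean, self-contained route: after stripping off the common real part via \eqref{4.3}, everything is governed by the single real-linear operator $T=L_I-R_{\widetilde I}$ on $\bH\cong\R^4$, and the one structural observation that $T$ is skew-adjoint (since $L_a^{*}=L_{\overline a}$, $R_a^{*}=R_{\overline a}$ and $\overline I=-I$, $\overline{\widetilde I}=-\widetilde I$) delivers both halves of the lemma at once: even rank pins $\dim\ker T$ at exactly $2$ once you have exhibited the two independent solutions from \eqref{4.4}, and ${\rm Im}\,T=(\ker T^{*})^{\perp}=(\ker T)^{\perp}$ converts solvability of \eqref{4.2} into the orthogonality condition $c\perp\Pi_{\alpha,\beta}$, which your cross-product computation identifies with $c\in\Pi_{\overline\alpha,\beta}$, i.e.\ $\overline\alpha c=c\beta$. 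This is arguably more conceptual than the direct verification one would expect in \cite{bol}, since the Fredholm-type alternative for a skew-adjoint operator explains \emph{why} the obstruction space is again a plane of the same shape. Two small points to tidy up in a written version: (i) the orthogonality $\Pi_{\alpha,\beta}^{\perp}=\Pi_{\overline\alpha,\beta}$ should also be recorded in the two degenerate cases $\beta=\alpha$ and $\beta=\overline\alpha$, where it is immediate from the definitions \eqref{4.4}--\eqref{4.5}; and (ii) your two closed-form particular solutions are in fact the \emph{same} quaternion, since the solvability condition in the form $(2{\rm Im}\,\alpha)c=-c(2{\rm Im}\,\beta)$ gives $(2{\rm Im}\,\alpha)^{-1}c=-c(2{\rm Im}\,\beta)^{-1}$ directly, which makes the coincidence of the two affine planes immediate rather than something to be checked separately.
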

We now proceed to the matrix equation
\begin{equation}
AX-XB=C,\quad X=[x_{ij}]=\begin{bmatrix}X_1 &\ldots & X_m\end{bmatrix},
\label{4.6}   
\end{equation}
where $A$ and $B$ of the form \eqref{4.1} are based on the spherical chains $\balpha$ and 
$\bbeta$ from the same conjugacy class $V$ and where we assume without loss of generality 
that $m\le n$. With the given matrices $A$, $B$ and $C$, we associate the matrix 
\begin{equation}
D:=CB-A^\prime C
\label{4.7}   
\end{equation}
where $A^\prime$ is given in \eqref{3.4a}. In more detail, if 
$C=\left[c_{i,j}\right]_{i=1,\ldots,n}^{j=1,\ldots,m}$ and if we let 
$c_{i,0}=c_{0,j}=0$ for all
$i,j$, then  
\begin{equation}
D=\left[d_{i,j}\right]_{i=1,\ldots,n}^{j=1,\ldots,m},\quad 
d_{i,j}=c_{i,j}\beta_j-\overline{\alpha}_ic_{i,j}+c_{i,j-1}+c_{i-1,j}.
\label{4.8}   
\end{equation}
We next introduce the elements $\Gamma_{k,j}\in\bH$ by the following double recursion
\begin{equation}
\Gamma_{k,j}=(\alpha_k-\overline{\alpha}_{k+1})^{-1}\left[d_{k+1,j}+
\Gamma_{k+1,j-2}-\Gamma_{k-1,j}-\Gamma_{k+1,j-1}(\overline{\beta}_j-\beta_{j-1})
\right]
\label{4.9}
\end{equation}
with the initial conditions 
\begin{equation}
\Gamma_{-1,j}=\Gamma_{0,j}=\Gamma_{k,0}=0\quad\mbox{for all}\quad k,j\ge 1.
\label{4.10}
\end{equation}
It is clear from \eqref{4.9} that the assumption that $\balpha$ is a spheerical chain 
(i.e., that $\alpha_k\neq \overline{\alpha}_{k+1}$) is essential. We make several further 
comments.
\begin{remark}
{\rm (1) Recursion \eqref{4.9} determines $\Gamma_{k,j}$ for all positive $k<n$ and 
$j\le m$  such that $k+j\le n$.

\smallskip
\noindent
(2) Any element $\Gamma_{k,1}$ from the first ``column" is determined by 
    the elements $\Gamma_{r,1}$ ($1\le r<k$).

\smallskip
\noindent 
(3) Any element in the $\ell$-th counter-diagonal $\mathcal D_\ell=\{\Gamma_{k,j}: \, 
k+j=\ell+1\}$ can be expressed in terms of the elements from the previous 
counter-diagonal $\mathcal D_{\ell-1}=\{\Gamma_{k,j}: \, k+j=\ell\}$ and one fixed 
element in $\mathcal D_\ell$. The latter follows from the formula \eqref{4.9}
since $\beta_j\neq \overline{\beta}_j$ ($\bbeta$ is a spherical chain).}
\label{R:4.1}
\end{remark}
\begin{lemma}
Let $\Gamma_{k,j}$ be defined as in \eqref{4.8}--\eqref{4.10} and let 
\begin{equation}
S_j=d_{1,j}+\Gamma_{1,j-1}(\beta_{j-1}-\overline{\beta}_j)+\Gamma_{1,j-2}\quad (j=1,\ldots,m).
\label{4.11}  
\end{equation}
If $S_j=0$ for $j=1,\ldots,m$, then 
\begin{equation}
\Delta_{k,j}:=\alpha_k\Gamma_{k,j}-\Gamma_{k,j}\beta_j-c_{k,j}+\Gamma_{k-1,j}-\Gamma_{k,j-1}=0
\label{4.12}
\end{equation}
for $k<n$ and $j\le m$  such that $k+j\le n$.
\label{L:6.3}
\end{lemma}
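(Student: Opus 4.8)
The plan is to regard $\Delta_{k,j}$ as the $(k,j)$ entry of the Sylvester residual $\Delta=A\Gamma-\Gamma B-C$, with $A=\mathcal J_{\balpha}$, $B=\mathcal J_{\bbeta}^\top$ and $\Gamma=[\Gamma_{k,j}]$; by \eqref{4.10} the border entries $\Delta_{0,j},\Delta_{k,0}$ vanish automatically. Writing $R_{k,j}:=c_{k,j}-\Gamma_{k-1,j}+\Gamma_{k,j-1}$ and $G_{k,j}:=d_{k+1,j}+\Gamma_{k+1,j-2}-\Gamma_{k-1,j}-\Gamma_{k+1,j-1}(\overline\beta_j-\beta_{j-1})$, so that \eqref{4.9} reads $\Gamma_{k,j}=q_k^{-1}G_{k,j}$ with $q_k:=\alpha_k-\overline\alpha_{k+1}$, the goal is to show $\Delta_{k,j}=\alpha_k\Gamma_{k,j}-\Gamma_{k,j}\beta_j-R_{k,j}=0$ for $k+j\le n$.

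The first move is to exploit that $\balpha,\bbeta\subset V$, i.e. $\cX_V(\alpha_i)=\cX_V(\beta_i)=0$. The relation $\cX_V(\alpha_k)=0$ is equivalent to the intertwining $\alpha_kq_k=q_k\alpha_{k+1}$, and since $q_k$ is purely imaginary we have $q_k^2\in\R$, whence $\alpha_kq_k^{-1}=q_k^{-1}\alpha_{k+1}$. Feeding this into $\Gamma_{k,j}=q_k^{-1}G_{k,j}$ gives $\alpha_k\Gamma_{k,j}-\Gamma_{k,j}\beta_j=q_k^{-1}(\alpha_{k+1}G_{k,j}-G_{k,j}\beta_j)$, so that
\[
\Delta_{k,j}=0 \iff \alpha_{k+1}G_{k,j}-G_{k,j}\beta_j=q_kR_{k,j};
\]
call the right-hand equation $(\ast_{k,j})$. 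This is the decisive step: it trades the singular operator $x\mapsto\alpha_kx-x\beta_j$, whose kernel $\Pi_{\alpha_k,\beta_j}$ is exactly what obstructs a naive argument, for shifted data on the left.

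I would then prove $(\ast_{k,j})$ by induction on the counter-diagonals $\mathcal D_\ell=\{k+j=\ell\}$, sweeping each $\mathcal D_\ell$ from its column-$1$ end (largest $k$) toward larger $j$. In $\alpha_{k+1}G_{k,j}-G_{k,j}\beta_j$ the only term living on $\mathcal D_\ell$ itself is the one carrying $\Gamma_{k+1,j-1}$; substituting the already-proven $\Delta_{k+1,j-1}=0$ for $\alpha_{k+1}\Gamma_{k+1,j-1}$ and invoking $\cX_V(\beta_{j-1})=0$ collapses it (the surviving factor is $\beta_{j-1}^2-\beta_{j-1}(\beta_j+\overline\beta_j)+|\beta_j|^2=\cX_V(\beta_{j-1})=0$), while the ``image'' part of $d_{k+1,j}$ dies through $\cX_V(\beta_j)=0$. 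What remains lies on $\mathcal D_{\ell-1}$, and using $\alpha_{k+1}+\overline\alpha_{k+1}=2\,{\rm Re}(\alpha_1)$ with $q_k=\alpha_k-\overline\alpha_{k+1}$, one checks that $(\ast_{k,j})$ is equivalent to the scalar solvability relation $\overline\alpha_kR_{k,j}=R_{k,j}\beta_j$ of Lemma \ref{L:6.2}. For $k\ge2$ this reduces, via the previous-diagonal identities, to $\Delta_{k-1,j}=0$ and holds by the inductive hypothesis; for $k=1$, where $\Gamma_{0,j}=0$, it is precisely $S_j=0$ — indeed $S_j$ is the $(1,j)$ entry of $A^\prime\Delta-\Delta B$, so the $m$ hypotheses $S_1=\cdots=S_m=0$ are consumed one per column, at the first row.

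The main obstacle is the bookkeeping inside this inductive step when $j>1$: after the $\mathcal D_\ell$-term is removed, the leftover $\mathcal D_{\ell-1}$ contributions — the genuine $d$-parts $c_{k,j},c_{k+1,j-1}$ from \eqref{4.8}, the terms $\Gamma_{k+1,j-2},\Gamma_{k-1,j}$, and the remainder of the $\Gamma_{k+1,j-1}$ substitution — must be reorganised, using the inductive identities $\Delta_{k,j-1}=0$ and $\Delta_{k+1,j-2}=0$, into the single quantity $\alpha_{k+1}R_{k,j}-R_{k,j}\beta_j$ before the cancellation $\alpha_{k+1}-q_k=\overline\alpha_k$ can be applied. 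This is mechanical but sign- and shift-sensitive; fixing the sweep order along each $\mathcal D_\ell$ (so that $\Delta_{k+1,j-1}=0$ is always available) and verifying that no entry outside $\mathcal D_{\le\ell}$ is ever invoked are the points that require real care.
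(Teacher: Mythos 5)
Your proposal is correct and follows essentially the same route as the paper's proof: your pivotal intertwining $\alpha_k q_k^{-1}=q_k^{-1}\alpha_{k+1}$ is exactly \eqref{4.13}, and your reduction of $(\ast_{k,j})$ first to the scalar solvability relation $\overline{\alpha}_kR_{k,j}=R_{k,j}\beta_j$ and then (via the previously established vanishings) to $\Delta_{k-1,j}=0$, anchored at $k=1$ by $S_j=0$, reproduces the paper's chain \eqref{4.19}, \eqref{4.27}, \eqref{feb6d}, \eqref{4.28}, \eqref{4.29}. The only difference is organizational --- you induct along counter-diagonals whereas the paper inducts on the column index $j$ with an inner recursion in $k$ --- and both orderings make the required prior identities $\Delta_{k+1,j-1}=\Delta_{k+1,j-2}=\Delta_{k,j-1}=\Delta_{k-1,j}=0$ available exactly when they are invoked.
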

\begin{proof} 
We first observe that for $\alpha\sim\beta$,
\begin{equation}
\alpha(\alpha-\overline{\beta})^{-1}=(\alpha-\overline{\beta})^{-1}\beta,\quad
\alpha-\overline{\beta}=\beta-\overline{\alpha}.
\label{4.13}
\end{equation}
By \eqref{4.8}--\eqref{4.10},
\begin{align}
\Gamma_{k,1}&=(\alpha_k-\overline{\alpha}_{k+1})^{-1}\left[d_{k+1,1}-\Gamma_{k-1,1}\right]\notag\\
&=(\alpha_k-\overline{\alpha}_{k+1})^{-1}\left[c_{k+1,1}\beta_1-\overline{\alpha}_{k+1}c_{k+1,1}
+c_{k,1}-\Gamma_{k-1,1}\right].\label{4.14}
\end{align}
Making use of the first equality (with $\alpha=\alpha_k$ and $\beta=\alpha_{k+1}$), we have,
on account of \eqref{4.7} and \eqref{4.14},
\begin{align}
\Delta_{k,1}=&\alpha_k\Gamma_{k,1}-\Gamma_{k,1}\beta_1-c_{k,1}+\Gamma_{k-1,1}\notag\\
=&(\alpha_k-\overline{\alpha}_{k+1})^{-1}\left[
\alpha_{k+1}\left(c_{k+1,1}\beta_1-\overline{\alpha}_{k+1}c_{k+1,1}
+c_{k,1}-\Gamma_{k-1,1}\right)\right.\notag\\
&-\left.\left(c_{k+1,1}\beta_1-\overline{\alpha}_{k+1}c_{k+1,1}
+c_{k,1}-\Gamma_{k-1,1}\right)\beta_1\right]-c_{k,1}+\Gamma_{k-1,1}\notag\\
=&(\alpha_k-\overline{\alpha}_{k+1})^{-1}\left[
-c_{k+1}\cX_{[\alpha_{k+1}]}(\beta_1)+\overline{\alpha}_k c_{k_1}-c_{k_1}\beta_1\right.\notag\\
&\left. -\overline{\alpha}_k\Gamma_{k-1,1}+\Gamma_{k-1,1}\beta_1\right]\notag\\
=&(\overline{\alpha}_{k+1}-\alpha_k)^{-1}\left[d_{k,1}-c_{k-1,1}
+\overline{\alpha}_{k}\Gamma_{k-1,1}-\Gamma_{k-1,j}\beta_1\right],\label{4.15}
\end{align}
where the last equality follows since $\beta_j\sim\alpha_{k+1}$ so that
$\cX_{[\alpha_{k+1}]}(\beta_j)=0$, and due to formula \eqref{4.8} for $d_{k,1}$.
Letting $k=1$ in \eqref{4.15} and taking \eqref{4.6} into account, we get
\begin{equation}
\Delta_{1,1}=(\overline{\alpha}_{2}-\alpha_1)^{-1}d_{1,1}=(\overline{\alpha}_{2}-\alpha_1)^{-1}S_1.
\label{4.17}
\end{equation}
For $k>1$, we have
\begin{align*}
\overline{\alpha}_k\Gamma_{k-1,j}-\Gamma_{k-1,j}\beta_j&=
\Delta_{k-1,1}+(\overline{\alpha}_k-\alpha_{k-1})\Gamma_{k-1,1}+c_{k-1,1}-\Gamma_{k-2,1}\\
&=\Delta_{k-1,1}+c_{k-1,1}-d_{k,1}
\end{align*}
where the first equality follows from formula \eqref{4.12} for $\Delta_{k-1,1}$, and 
the second equality follows from \eqref{4.14} (with $k$ replaced by $k-1$). Combining the latter 
equality with \eqref{4.15} gives
$$
\Delta_{k,1}=(\overline{\alpha}_{k+1}-\alpha_k)^{-1}\Delta_{k-1,1},
$$
from which, on account of \eqref{4.17},  we recursively obtain 
\begin{equation}
\Delta_{k,1}=(\overline{\alpha}_{k+1}-\alpha_k)^{-1}(\overline{\alpha}_{k}-\alpha_{k-1})^{-1}
\cdots (\overline{\alpha}_{2}-\alpha_1)^{-1}S_1.
\label{feb7}
\end{equation}
Since $S_1=0$, it follows that 
\begin{equation}
\Delta_{k,1}:=\alpha_k\Gamma_{k,1}-\Gamma_{k,1}\beta_1-c_{k,1}+\Gamma_{k-1,1}=0 
\quad (k=1,\ldots,n-1).
\label{4.18}
\end{equation}
We now assume that $j\ge 2$. Making use of the first equality (with $\alpha=\alpha_k$ 
and $\beta=\alpha_{k+1}$), we have, on account of \eqref{4.9},
\begin{align}
&\alpha_k\Gamma_{k,j}-\Gamma_{k,j}\beta_j\label{4.19}\\
&=\alpha_k(\alpha_k-\overline{\alpha}_{k+1})^{-1}
\left[d_{k+1,j}+
\Gamma_{k+1,j-2}-\Gamma_{k-1,j}-\Gamma_{k+1,j-1}(\overline{\beta}_j-\beta_{j-1})\right]\notag\\
&\quad - (\alpha_k-\overline{\alpha}_{k+1})^{-1}\left[d_{k+1,j}+
\Gamma_{k+1,j-2}-\Gamma_{k-1,j}-\Gamma_{k+1,j-1}(\overline{\beta}_j-\beta_{j-1})
\right]\beta_j\notag\\
&=(\alpha_k-\overline{\alpha}_{k+1})^{-1}\left[
\alpha_{k+1}d_{k+1,j}-d_{k+1,j}\beta_j+
\alpha_{k+1}\Gamma_{k+1,j-2}-\Gamma_{k+1,j-2}\beta_j\right.\notag\\
&\quad \left.-\alpha_{k+1}\Gamma_{k-1,j}+\Gamma_{k-1,j}\beta_j-(\alpha_{k+1}\Gamma_{k+1,j-1}-
\Gamma_{k+1,j-1}\beta_{j-1})(\overline{\beta}_j-\beta_{j-1})\right].\notag
\end{align}
Observe that in view of \eqref{4.8} and \eqref{2.12},
\begin{align*}
\alpha_{k+1}d_{k+1,j}-d_{k+1,j}\beta_j=&\alpha_{k+1}
(c_{k+1,j}\beta_j-\overline{\alpha}_{k+1}c_{k+1,j}+c_{k+1,j-1}+c_{k,j})\\
&-(c_{k+1,j}\beta_j-\overline{\alpha}_{k+1}c_{k+1,j}+c_{k+1,j-1}+c_{k,j})\beta_j\\
=&c_{k+1,j}\cX_{[\alpha_{k+1}]}(\beta_j)+\alpha_{k+1}(c_{k+1,j-1}+c_{k,j})\\
&-(c_{k+1,j-1}+c_{k,j})\beta_j\\
=&\alpha_{k+1}(c_{k+1,j-1}+c_{k,j})-(c_{k+1,j-1}+c_{k,j})\beta_j,
\end{align*}
where the last equality follows since $\beta_j\sim\alpha_{k+1}$ so that
$\cX_{[\alpha_{k+1}]}(\beta_j)=0$. We next observe equalities
\begin{align*}
\alpha_{k+1}c_{k+1,j-1}-c_{k+1,j-1}\overline{\beta}_{j-1}&=
c_{k+1,j-1}\beta_{j-1}-\overline{\alpha}_{k+1}c_{k+1,j-1}\notag\\
&=d_{k+1,j-1}-c_{k+1,j-2}-c_{k,j-1}\\
\overline{\alpha}_{k}c_{k,j}-c_{k,j}\beta_{j}&=c_{k,j-1}+c_{k-1,j}-d_{k,j},
\end{align*}
which follow from \eqref{4.8} and the fact that the elements from the same
conjugacy class have the same real part. Combining the three last equalities gives
\begin{align}
\alpha_{k+1}d_{k+1,j}-d_{k+1,j}\beta_j=&d_{k+1,j-1}-d_{k,j}
-c_{k+1,j-2}+c_{k-1,j}\notag\\
&+(\alpha_k-\overline{\alpha}_{k+1})c_{k,j}+c_{k+1,j-1}(\overline{\beta}_j-\beta_{j-1}).
\notag
\end{align}
We now substitute the latter equality into \eqref{4.19} and then \eqref{4.19} into 
the definition \eqref{4.12} of $\Delta_{k,j}$ to conclude
\begin{align}
\Delta_{k,j}=&\alpha_k\Gamma_{k,j}-\Gamma_{k,j}\beta_j-c_{k,j}+\Gamma_{k-1,j}-\Gamma_{k,j-1}\notag\\
=&(\alpha_k-\overline{\alpha}_{k+1})^{-1}\left[
d_{k+1,j-1}-d_{k,j}-c_{k+1,j-2}+c_{k-1,j}\right.\notag\\
&+(\alpha_k-\overline{\alpha}_{k+1})c_{k,j}+c_{k+1,j-1}(\overline{\beta}_j-\beta_{j-1})
+\alpha_{k+1}\Gamma_{k+1,j-2}\notag\\
& -\Gamma_{k+1,j-2}\beta_j
-\alpha_{k+1}\Gamma_{k-1,j}+\Gamma_{k-1,j}\beta_j\notag\\
&\left. -(\alpha_{k+1}\Gamma_{k+1,j-1}-
\Gamma_{k+1,j-1}\beta_{j-1})(\overline{\beta}_j-\beta_{j-1})\right]\notag\\
&-c_{k,j}+\Gamma_{k-1,j}-\Gamma_{k,j-1}\notag\\
=&(\alpha_k-\overline{\alpha}_{k+1})^{-1}\left[
d_{k+1,j-1}-d_{k,j}-c_{k+1,j-2}+c_{k-1,j}\right.\notag\\
&-\overline{\alpha}_{k}\Gamma_{k-1,j}+\Gamma_{k-1,j}\beta_j 
+\alpha_{k+1}\Gamma_{k+1,j-2}-\Gamma_{k+1,j-2}\beta_j\label{4.20}\\
&\left. -(\alpha_{k+1}\Gamma_{k+1,j-1}-
\Gamma_{k+1,j-1}\beta_{j-1}-c_{k+1,j-1})(\overline{\beta}_j-\beta_{j-1})\right]-\Gamma_{k,j-1}.
\notag
\end{align}
Letting $j=2$ in the latter equality gives
\begin{align}
\Delta_{k,2}=&(\alpha_k-\overline{\alpha}_{k+1})^{-1}\left[
d_{k+1,1}-d_{k,2}+c_{k-1,2}
-\overline{\alpha}_{k}\Gamma_{k-1,2}+\Gamma_{k-1,2}\beta_2\right.\notag\\
&\left. -(\alpha_{k+1}\Gamma_{k+1,1}-
\Gamma_{k+1,1}\beta_{1}-c_{k+1,1})(\overline{\beta}_2-\beta_{1})\right]-\Gamma_{k,1}.
\label{4.21}
\end{align}  
Taking into account the first equality in \eqref{4.14} and equality 
$$
\alpha_{k+1}\Gamma_{k+1,1}-\Gamma_{k+1,1}\beta_{1}-c_{k+1,1}=-\Gamma_{k,1}
$$
which is a consequence of \eqref{4.18} (with $k+1$ instead of $k$), we simplify
\eqref{4.21} as follows: 
\begin{align}
\Delta_{k,2}=&(\overline{\alpha}_{k+1}-\alpha_k)^{-1}\left[
d_{k,2}-c_{k-1,2}+\overline{\alpha}_{k}\Gamma_{k-1,2}-\Gamma_{k-1,2}\beta_2\right.\notag\\
&\qquad\left. +\Gamma_{k,1}(\beta_1-\overline{\beta}_2)- \Gamma_{k-1,1}\right].
\label{4.22}
\end{align}
Letting $k=1$ in \eqref{4.22} gives, on account of \eqref{4.11},
\begin{equation}
\Delta_{1,2}=(\overline{\alpha}_{2}-\alpha_1)^{-1}\left[d_{1,2}
 +\Gamma_{1,1}(\beta_1-\overline{\beta}_2)\right]=
(\overline{\alpha}_{2}-\alpha_1)^{-1}S_2.
\label{4.23}
\end{equation}
On the other hand, if $k\ge 2$, then 
$$
(\alpha_{k-1}-\overline{\alpha}_{k})\Gamma_{k-1,2}=
d_{k,2}-\Gamma_{k-2,2}+\Gamma_{k,1}(\beta_1-\overline{\beta}_2),
$$
by formula \eqref{4.9} (for $k$ replaced by $k-1$) and therefore, 
by the definition \eqref{4.12} for $\Delta_{k-1,2}$,
\begin{align*}
\Delta_{k-1,2}=&(\alpha_{k-1}-\overline{\alpha}_{k})\Gamma_{k-1,2}+
\overline{\alpha}_{k}\Gamma_{k-1,2}-\Gamma_{k-1,2}\beta_2 
-c_{k-1,2}\\
&+\Gamma_{k-2,2}-\Gamma_{k-1,1}\\
=&d_{k,2}+\Gamma_{k,1}(\beta_1-\overline{\beta}_2)
+\overline{\alpha}_{k}\Gamma_{k-1,2}-\Gamma_{k-1,2}\beta_2
-c_{k-1,2}-\Gamma_{k-1,1},
\end{align*}
which, being substituted into \eqref{4.22}, leads us to
\begin{equation}
\Delta_{k,2}=(\overline{\alpha}_{k+1}-\alpha_k)^{-1}\Delta_{k-1,2}\quad\mbox{for}\quad
k=2,\ldots,n-2.
\label{4.24}
\end{equation}
We now recursively obtain from \eqref{4.24} and \eqref{4.23} that 
$$
\Delta_{k,2}=(\overline{\alpha}_{k+1}-\alpha_k)^{-1}(\overline{\alpha}_{k}-\alpha_{k-1})^{-1}
\cdots (\overline{\alpha}_{2}-\alpha_1)^{-1}S_2,
$$
and since $S_2=0$, it follows that $\Delta_{k,2}=0$ for $k=1,\ldots,n-2$.

\smallskip

The rest will be verified by induction in $j$. Let us assume that 
\begin{equation}
\Delta_{k,\ell}=0\quad\mbox{for all  $k<n$ and $\ell<j$  such that $k+\ell\le n$.} 
\label{4.25}
\end{equation}
In particular, 
\begin{align*}
\Delta_{k+1,j-1}=&\alpha_{k+1}\Gamma_{k+1,j-1}-\Gamma_{k+1,j-1}\beta_{j-1}\notag\\
&-c_{k+1,j-1}+\Gamma_{k,j-1}-\Gamma_{k+1,j-2}=0,
\end{align*}
so that 
\begin{equation}
\alpha_{k+1}\Gamma_{k+1,j-1}-\Gamma_{k+1,j-1}\beta_{j-1}-c_{k+1,j-1}
=\Gamma_{k+1,j-2}-\Gamma_{k,j-1}.
\label{feb6}
\end{equation}
Substituting the latter equality into \eqref{4.20} gives
\begin{align}   
\Delta_{k,j}=&(\alpha_k-\overline{\alpha}_{k+1})^{-1}\left[
d_{k+1,j-1}-d_{k,j}-c_{k+1,j-2}+c_{k-1,j}\right.\notag\\
&-\overline{\alpha}_{k}\Gamma_{k-1,j}+\Gamma_{k-1,j}\beta_j
+\alpha_{k+1}\Gamma_{k+1,j-2}-\Gamma_{k+1,j-2}\beta_j\notag\\
&\left. -(\Gamma_{k+1,j-2}-\Gamma_{k,j-1})
(\overline{\beta}_j-\beta_{j-1})\right]-\Gamma_{k,j-1}\notag\\
=&(\alpha_k-\overline{\alpha}_{k+1})^{-1}\left[
d_{k+1,j-1}-d_{k,j}-c_{k+1,j-2}+c_{k-1,j}\right.\notag\\
&-\overline{\alpha}_{k}\Gamma_{k-1,j}+\Gamma_{k-1,j}\beta_j
+\alpha_{k+1}\Gamma_{k+1,j-2}-\Gamma_{k+1,j-2}\overline{\beta}_{j-1}\notag\\        
&\left. +\Gamma_{k,j-1}
(\overline{\beta}_j-\beta_{j-1})-(\alpha_k-\overline{\alpha}_{k+1}) 
\Gamma_{k,j-1}\right].\label{4.26}
\end{align}
By the definition \eqref{4.12} of $\Delta_{k+1,j-2}$, 
\begin{align*}
\alpha_{k+1}\Gamma_{k+1,j-2}-\Gamma_{k+1,j-2}\overline{\beta}_{j-1}=&
\Delta_{k+1,j-2}+c_{k+1,j-2}-\Gamma_{k,j-2}\\
&+\Gamma_{k+1,j-3}+\Gamma_{k+1,j-2}(\beta_{j-2}-\overline{\beta}_{j-1}),
\end{align*}
and since, by formula \eqref{4.9} (with $j-1$ instead of $j$),
\begin{align*}
\Gamma_{k+1,j-2}(\beta_{j-2}-\overline{\beta}_{j-1})=&
(\alpha_k-\overline{\alpha}_{k+1})\Gamma_{k,j-1}-d_{k+1,j-1}\\
&-\Gamma_{k+1,j-3}+\Gamma_{k-1,j-1},
\end{align*}
combining the two latter equalities with the assumption \eqref{4.25} gives
\begin{align}
&\alpha_{k+1}\Gamma_{k+1,j-2}-\Gamma_{k+1,j-2}\overline{\beta}_{j-1}+d_{k+1,j-1}-c_{k+1,j-2}
-(\alpha_k-\overline{\alpha}_{k+1})\Gamma_{k,j-1}\notag\\
&=\Gamma_{k-1,j-1}-\Gamma_{k,j-2}.\label{feb6b}
\end{align}
Substituting the latter equality into \eqref{4.26} gives
\begin{align}
\Delta_{k,j}=&(\overline{\alpha}_{k+1}-\alpha_k)^{-1}\left[
d_{k,j}-c_{k-1,j}+\overline{\alpha}_{k}\Gamma_{k-1,j}-\Gamma_{k-1,j}\beta_j\right.
\notag\\
&\left. +\Gamma_{k,j-1}
(\beta_{j-1}-\overline{\beta}_j)-\Gamma_{k-1,j-1}+\Gamma_{k,j-2}\right].\label{4.27}
\end{align}
Letting $k=1$ in \eqref{4.27} we get, on account of \eqref{4.11},
\begin{align}
\Delta_{1,j}=&(\overline{\alpha}_{2}-\alpha_1)^{-1}\left[d_{1,j}+\Gamma_{1,j-1}
(\beta_{j-1}-\overline{\beta}_j)+\Gamma_{1,j-2}\right]\notag\\
=&(\overline{\alpha}_{2}-\alpha_1)^{-1}
S_j.\label{4.28}
\end{align}
If $k\ge 2$, then by the definition \eqref{4.12} of $\Delta_{k-1,j}$ and 
by formula \eqref{4.9} (with $k-1$ instead of $k$), we have 
\begin{align}
\Delta_{k-1,j}=&(\alpha_{k-1}-\overline{\alpha}_k)\Gamma_{k-1,j}+
\overline{\alpha}_k\Gamma_{k-1,j}-\Gamma_{k-1,j}{\beta}_{j}\notag\\
&-c_{k-1,j}+\Gamma_{k-2,j}-\Gamma_{k-1,j-1}\notag\\
=&d_{k,j}+\Gamma_{k,j-2}-\Gamma_{k,j-1}(\overline{\beta}_{j}-\beta_{j-1})
+\overline{\alpha}_k\Gamma_{k-1,j}-\Gamma_{k-1,j}{\beta}_{j}\notag\\
&-c_{k-1,j}-\Gamma_{k-1,j-1}\label{feb6d}
\end{align}
which together with \eqref{4.28} implies
\begin{equation}
\Delta_{k,j}=(\overline{\alpha}_{k+1}-\alpha_k)^{-1}\Delta_{k-1,j}\quad\mbox{for}\quad
k=2,\ldots,n-j.
\label{4.29}
\end{equation}
We now recursively obtain from \eqref{4.29} and \eqref{4.28} that
$$
\Delta_{k,j}=(\overline{\alpha}_{k+1}-\alpha_k)^{-1}(\overline{\alpha}_{k}-\alpha_{k-1})^{-1}
\cdots (\overline{\alpha}_{2}-\alpha_1)^{-1}S_j,
$$
and since $S_j=0$, it follows that $\Delta_{k,j}=0$ for $k=1,\ldots,n-j$. The induction 
argument completes the proof of the lemma.
\end{proof}
\begin{remark}
The elements $\Delta_{k,j}$ defined as in \eqref{4.12} satisfy relations
\begin{equation}
\Delta_{k,j}
=(\overline{\alpha}_{k+1}-\alpha_k)^{-1}\left[
\Delta_{k+1,j-1}(\overline{\beta}_j-\beta_{j-1})
-\Delta_{k+1,j-2}+\Delta_{k-1,j}\right]
\label{4.6fb}
\end{equation}
for $k\ge 2$ and $j\ge 3$.
\label{R:4.5}
\end{remark}
\begin{proof}
Although we do not assume equalities \eqref{4.11} to be in force, 
formula \eqref{4.20} still holds true. Without assumptions 
\eqref{4.25}, equality \eqref{feb6} gets the extra term $\Delta_{k+1,j-1}$
on the right, so that formula \eqref{4.26} takes the form
\begin{align}
\Delta_{k,j}=&(\alpha_k-\overline{\alpha}_{k+1})^{-1}\left[
d_{k+1,j-1}-d_{k,j}-c_{k+1,j-2}+c_{k-1,j}\right.\notag\\
&-\overline{\alpha}_{k}\Gamma_{k-1,j}+\Gamma_{k-1,j}\beta_j
+\alpha_{k+1}\Gamma_{k+1,j-2}-\Gamma_{k+1,j-2}\overline{\beta}_{j-1}\notag\\
&\left. -(\Delta_{k+1,j-1}-\Gamma_{k,j-1})
(\overline{\beta}_j-\beta_{j-1})-(\alpha_k-\overline{\alpha}_{k+1})
\Gamma_{k,j-1}\right].\label{feb6e}
\end{align}
For $j\ge 3$, we use equality \eqref{feb6} with the extra term $\Delta_{k+1,j-2}$
on the right; substituting this modified equality into \eqref{feb6e} gives
the following modification of \eqref{4.27}:
\begin{align}
\Delta_{k,j}=&(\overline{\alpha}_{k+1}-\alpha_k)^{-1}\left[
d_{k,j}-c_{k-1,j}+\overline{\alpha}_{k}\Gamma_{k-1,j}-\Gamma_{k-1,j}\beta_j
-\Gamma_{k-1,j-1}\right.\notag\\
&\left. +(\Delta_{k+1,j-1}-\Gamma_{k,j-1})                 
(\overline{\beta}_j-\beta_{j-1})+\Gamma_{k,j-2}-\Delta_{k+1,j-2}
\right].\label{feb6f}
\end{align}
Combining the latter equality with \eqref{feb6d} (in case $k\ge 2$)
gives \eqref{4.6fb}.
\end{proof}
Since $\Gamma_{1,1}=(\alpha_1-\overline{\alpha}_{2})^{-1}d_{2,1}$, we derive from 
\eqref{feb6e} 
$$
\Delta_{1,2}=(\overline{\alpha}_2-\alpha_1)^{-1}\left[
d_{1,2}+(\Delta_{2,1}-\Gamma_{1,1})(\overline{\beta}_2-\beta_{1})\right],
$$
which, on account of formulas \eqref{4.11} and \eqref{feb7} for $S_2$ and $\Delta_{2,1}$,
respectively, can be written as 
$$
\Delta_{1,2}=(\overline{\alpha}_2-\alpha_1)^{-1}S_2+(\overline{\alpha}_2-\alpha_1)^{-1}
(\overline{\alpha}_3-\alpha_2)^{-1}
(\overline{\alpha}_2-\alpha_1)^{-1}S_1(\overline{\beta}_2-\beta_{1}).
$$
As expected, the letter formula coincides with \eqref{4.23} in case $S_1=0$. For $j\ge 3$,
we have from \eqref{feb6f} and formula \eqref{4.11} for $S_j$,
\begin{align}
\Delta_{1,j}=&(\overline{\alpha}_{2}-\alpha_1)^{-1}\left[
d_{1,j}+(\Delta_{2,j-1}-\Gamma_{1,j-1})
(\overline{\beta}_j-\beta_{j-1})+\Gamma_{1,j-2}-\Delta_{2,j-2}
\right]\notag\\
=&(\overline{\alpha}_{2}-\alpha_1)^{-1}\left[S_j+\Delta_{2,j-1}(\overline{\beta}_j-\beta_{j-1})
-\Delta_{2,j-2}\right],
\label{feb7a}
\end{align}
which is the analog of \eqref{4.28}. It is now clear from \eqref{4.6fb} and \eqref{feb7a} that 
$\Delta_{k,j}$ is a two-sided linear combination of $S_1,\ldots,S_k$ with left and right coefficients 
depending respectively, on $\balpha$ and $\bbeta$ only. However, explicit formulas for 
$\Delta_{k,j}$ in terms of $S_1,\ldots,S_k$ are quite complicated.	
\begin{theorem}
Given matrices $A$, $B$ (based on spherical chains $\balpha=(\alpha_1,\ldots,\alpha_n)$ and 
$\bbeta=(\beta_1,\ldots,\beta_m)$ ($m\le n$)from the same conjugacy 
class $V\subset\bH$) and $C$ as in \eqref{4.6}, let $d_{i,j}$ and $\Gamma_{i,j}$
be defined as in \eqref{4.7}--\eqref{4.10} for $k<n$ and
$j\le m$  such that $k+j\le n$. Then 
\begin{enumerate}
\item The Sylvester equation \eqref{4.6} has a solution if and only if 
\begin{equation}
d_{1,j}+\Gamma_{1,j-1}(\beta_{j-1}-\overline{\beta}_j)+\Gamma_{1,j-2}=0\quad\mbox{for}\quad 
j=1,\ldots,m.
\label{4.30}
\end{equation}
\item For any solution $X=\left[x_{i,j}\right]\in\bH^{n\times m}$ to the equation \eqref{4.6},
$x_{i,j}=\Gamma_{i,j}$ for all $i<n$ and $j\le m$  such that $i+j\le n$
\end{enumerate}
\label{T:4.4}
\end{theorem}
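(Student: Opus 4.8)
\emph{Reformulation.} The plan is to argue entirely with the entrywise form of \eqref{4.6}. Writing out $AX-XB=C$ for $A,B$ as in \eqref{4.1}, with the conventions $x_{0,j}=x_{i,0}=0$, the equation is equivalent to the scalar relations
\[
\alpha_i x_{i,j}-x_{i,j}\beta_j+x_{i-1,j}-x_{i,j-1}=c_{i,j},\qquad 1\le i\le n,\ 1\le j\le m,
\]
and the quantity $\Delta_{k,j}$ in \eqref{4.12} is exactly the residual of this relation when $x$ is replaced by $\Gamma$. I would also record the identity obtained as in the proof of Theorem \ref{T:3.2a}: multiplying \eqref{4.6} by $A'$ on the left and by $B$ on the right, subtracting, and using \eqref{3.4b} yields the \emph{reduced equation} $\widetilde{A}X-X\cX_V(B)=D$, with $D=CB-A'C$ as in \eqref{4.7}. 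Computing $\cX_V(B)$ entrywise (it is strictly upper triangular, hence nilpotent, since $\beta_j\in V$) shows that the $(k+1,j)$ entry of the reduced equation is precisely the recursion \eqref{4.9} with $\Gamma$ replaced by $x$. This is the bridge between a solution and the auxiliary array $\Gamma_{k,j}$.

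\emph{Assertion (2) and necessity in (1).} Let $X=[x_{i,j}]$ be any solution and let $T=\{(i,j):\,i<n,\ j\le m,\ i+j\le n\}$ be the triangle on which $\Gamma$ is defined. Since $X$ satisfies the reduced equation (a consequence of \eqref{4.6}), each $x_{k,j}$ with $(k,j)\in T$ obeys the same recursion and the same initial conditions \eqref{4.10} as $\Gamma_{k,j}$. Ordering the indices by the counter-diagonals of Remark \ref{R:4.1} — first the $j=1$ column, then sweeping each counter-diagonal from the largest row index downward — an induction gives $x_{k,j}=\Gamma_{k,j}$ throughout $T$, which is (2). For necessity in (1) I would read off the first row of the reduced equation: as the first row of $\widetilde{A}$ vanishes, its $(1,j)$ entry is
\[
d_{1,j}+x_{1,j-1}(\beta_{j-1}-\overline{\beta}_j)+x_{1,j-2}=0.
\]
Because $m\le n$ we have $(1,j-1)\in T$ for every $j\le m$, so by (2) we may substitute $x_{1,j-1}=\Gamma_{1,j-1}$ and $x_{1,j-2}=\Gamma_{1,j-2}$, turning this identity into exactly $S_j=0$, i.e.\ \eqref{4.30}.

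\emph{Sufficiency in (1).} Assume \eqref{4.30}. Lemma \ref{L:6.3} then gives $\Delta_{k,j}=0$ on $T$, so $\Gamma$ already solves the entrywise equation on the triangle $\{i+j\le n\}$. It remains to extend $\Gamma$ to a full $X\in\bH^{n\times m}$ solving the entrywise equation everywhere. I would proceed counter-diagonal by counter-diagonal: with all entries $i+j\le\ell-1$ filled, the entrywise relation at each $(i,j)$ with $i+j=\ell$ is a scalar equation $\alpha_i x_{i,j}-x_{i,j}\beta_j=r_{i,j}$ with $r_{i,j}$ known, which by Lemma \ref{L:6.2} is solvable precisely when $r_{i,j}\in\Pi_{\overline{\alpha}_i,\beta_j}$, with a two-real-parameter family of solutions $(2{\rm Im}\,\alpha_i)^{-1}r_{i,j}+\Pi_{\alpha_i,\beta_j}$. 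The first critical step is $\ell=n+1$, where $r_{i,j}$ is built only from $\Gamma$-values: using the defining recursion \eqref{4.9} together with $\Delta_{i-1,j}=0$ and $\Delta_{i,j-1}=0$, the compatibility defect collapses, via the common-real-part identity $\alpha+\overline{\alpha}=\beta+\overline{\beta}$ on $V$, to $(\alpha_i+\overline{\alpha}_i)\Gamma_{i,j-1}-\Gamma_{i,j-1}(\beta_{j-1}+\overline{\beta}_{j-1})=0$. Hence the $\ell=n+1$ equations are automatically solvable.

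\emph{Main obstacle.} The difficulty is the passage to $\ell\ge n+2$, where compatibility is \emph{not} automatic: it depends on the two free real parameters chosen in each scalar solution on the $(\ell-1)$-st counter-diagonal, so the construction must choose those parameters to meet the next round of conditions. I would formulate this as the linear map sending the free parameters on counter-diagonal $\ell-1$ to the compatibility defects on counter-diagonal $\ell$, and prove it solvable. Two facts guide the argument: a dimension count (counter-diagonal $\ell\ge n+1$ carries $2N_\ell$ free real parameters and imposes $2N_{\ell+1}$ conditions, where $N_\ell$ is the number of grid points on it and $N_\ell>N_{\ell+1}$, leaving exactly $2m$ free parameters overall, matching the homogeneous solution space of Section 5), and the spherical-chain hypothesis $\beta_j\neq\overline{\beta}_j$, which — as in Remark \ref{R:4.1}(3) — makes the relevant coefficient maps invertible and renders the system triangular. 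Establishing this surjectivity cleanly is the step I expect to demand the most care; once it holds, any resulting $X$ solves \eqref{4.6}, completing the equivalence in (1).
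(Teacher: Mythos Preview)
Your treatment of assertion (2) and of the necessity in (1) is correct and essentially the same as the paper's: both pass from \eqref{4.6} to the reduced equation $\widetilde{A}X-X\widetilde{B}=D$ (with $\widetilde{B}=\cX_V(B)$), identify its entries with the recursion \eqref{4.9}, and read off $S_j=0$ from the vanishing first row of $\widetilde{A}$.

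The sufficiency argument, however, has a genuine gap. You recognise it yourself: beyond the triangle $i+j\le n$, each scalar equation $\alpha_i x_{i,j}-x_{i,j}\beta_j=r_{i,j}$ is solvable only if $r_{i,j}\in\Pi_{\overline{\alpha}_i,\beta_j}$, and this depends on the free parameters chosen on the previous counter-diagonal. You propose to show that those parameters can always be chosen compatibly, via a dimension count and a triangularity claim, but you do not carry this out. The surjectivity of the relevant linear map is asserted, not proved, and the honest remark that this step ``I expect to demand the most care'' is exactly where the proof stops being a proof.

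The paper avoids this difficulty altogether by an extension trick. Rather than selecting parameters, it enlarges the data so that the entire $n\times m$ block lies inside the triangle of a bigger system, where Lemma~\ref{L:6.3} applies directly. Concretely: extend $\balpha$ to a spherical chain $\widehat{\balpha}$ of length $n+m$ (e.g.\ by repeating $\alpha_n$), extend $C$ by $m$ zero rows, form the corresponding $\widehat{D}$, and run the recursion \eqref{4.9}--\eqref{4.10} to produce $\Gamma_{k,j}$ for all $k+j\le n+m$. The quantities $S_1,\ldots,S_m$ involve only $\alpha_i$, $\beta_j$, $c_{i,j}$ with indices at most $m\le n$, so they are unchanged by the extension and still vanish. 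Lemma~\ref{L:6.3}, applied to the extended system, then gives $\Delta_{k,j}=0$ for all $k+j\le n+m$, in particular for every $1\le k\le n$, $1\le j\le m$. For those indices the formula \eqref{4.12} for $\Delta_{k,j}$ uses only the original $\alpha_k$, $\beta_j$, $c_{k,j}$ and entries of the $n\times m$ block, so the matrix $\Gamma=[\Gamma_{k,j}]_{k\le n,\,j\le m}$ is a solution of \eqref{4.6}. No compatibility conditions need to be checked, and no parameters need to be chosen.
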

\begin{proof}
Let $X\in\bH^{n\times m}$ satisfy \eqref{4.6}. We now verify that 
\begin{equation}
\widetilde{A}X-X\widetilde{B}=D
\label{4.31}
\end{equation}
where $A^\prime$, $\widetilde{A}$ and $D$ are defined in \eqref{3.4a}, \eqref{3.4c} and \eqref{4.7}, 
respectively, 
and where
\begin{equation}
\widetilde{B}=
\left[\begin{array}{cccccc} 0 &\beta_1-\overline{\beta}_2 & 1 & 0 &\ldots&0\\
0 & 0 & \beta_2-\overline{\beta}_3 &\ddots & \ddots & \vdots\\
 &&&&& 0\\
\vdots&\vdots&\ddots&\ddots&\ddots& 1\\
&&&&0& \beta_{m-1}-\overline{\beta}_m \\
0&0&\ldots &&0& 0
\end{array}\right].
\label{4.32}
\end{equation}
We start as in the proof of Theorem \ref{T:3.2a}:  multiplying the equation \eqref{4.6} by $A^\prime$
on the  left and by $B$ on the right and then subtracting the second equality from
the first we get \eqref{3.4d}. For $B$ of the form \eqref{4.1},
\begin{align*}
\cX_V(B)&=B^2-2{\rm Re}\beta_1 \cdot
B+|\beta_1|^2I_m\\&=\left[\cX_V(\beta_i)\delta_{i,j}+(\beta_i-\overline{\beta}_{i+1})\delta_{i+1,j}+
\delta_{i+2,j}\right]_{i,j=1}^m,
\end{align*}
and since $\beta_i\in V$ for $i=1,\ldots,n$, we conclude that $\cX_V(B)=\widetilde{B}$; see \eqref{4.32}.
Now \eqref{3.4d} takes the form \eqref{4.31}.

\smallskip

Let $X_j$ and $D_j$ denote the $j$-th column in $X$ and $D$, respectively:
$$
X_j={\rm Col}_{1\le k\le n}x_{k,j},\quad D_j={\rm Col}_{1\le k\le n}d_{k,j}.
$$
Taking into account the explicit structure \eqref{4.32} of $\widetilde{B}$, 
we now equate the corresponding columns in \eqref{4.31}:
\begin{align}
\widetilde AX_1&=D_1, \label{4.33}\\
\widetilde AX_2&=D_2+X_1(\beta_1-\overline{\beta}_2),\label{4.34}\\
\widetilde AX_j&=D_j+X_{j-1}(\beta_{j-1}-\overline{\beta}_j)+X_{j-2}\quad (3\le j\le m).
\label{4.35}
\end{align}
Making use of explicit formula \eqref{3.4c} for $\widetilde{A}$ we equate the top entries in \eqref{4.33} and 
get
$d_{1,1}=0$ (that is, the first condition in \eqref{4.30}). Equating other corresponding entries in
\eqref{4.33} we get
$$
\mathbb A_n \begin{bmatrix}x_{1,1} \\ x_{2,1} \\ \vdots \\ x_{n-1,1}
\end{bmatrix}=\begin{bmatrix}d_{2,1} \\ d_{3,1} \\ \vdots \\ d_{n,1}
\end{bmatrix},
$$
where 
\begin{equation}
\mathbb A_n =\left[\begin{array}{cccccc}
\alpha_1-\overline{\alpha}_2&0&&&0\\ 1 &\alpha_2-\overline{\alpha}_3&\ddots&\ddots&\vdots\\
0&1& \ddots&\ddots&\vdots \\
\vdots&\ddots&\ddots&\ddots&0\\ 0&\ldots &0&1&\alpha_{n-1}-\overline{\alpha}_n\end{array}\right]
\label{4.16}
\end{equation}
from which we conclude
\begin{align}
x_{1,1}&=(\alpha_1-\overline{\alpha}_{2})^{-1}d_{2,1},\notag\\
x_{k,1}&=(\alpha_k-\overline{\alpha}_{k+1})^{-1}\left(d_{k+1,1}-x_{k-1,1}
\right), \quad 2\le k\le  n-1.\label{4.36}
\end{align}
Comparing \eqref{4.36} with \eqref{4.14}, we see that $x_{1,1}=\Gamma_{1,1}$ and that recursions
defining  $x_{k,1}$ and $\Gamma_{k,1}$ in terms of $x_{k-1,1}$ and $\Gamma_{k-1,1}$, respectively,
are identical. Therefore, 
\begin{equation}
x_{k,1}=\Gamma_{k,1}\quad\mbox{for}\quad k=1,\ldots,n-1.
\label{4.37}
\end{equation}
Making again use of formula \eqref{3.4c} for $\widetilde{A}$ we equate the top entries in \eqref{4.34} and
get 
$$
d_{1,2}+x_{1,1}(\beta_1-\overline{\beta}_2)=d_{1,2}+\Gamma_{1,1}(\beta_1-\overline{\beta}_2)=0
$$
(the second equality holds since $x_{1,1}=\Gamma_{1,1}$, by \eqref{4.37}), which is the second condition in  
\eqref{4.30}. Equating other corresponding entries in
\eqref{4.33} and taking into account \eqref{4.37}, we get
$$
\mathbb A_n \begin{bmatrix}x_{1,2} \\ \vdots \\ x_{n-2,2}\\ x_{n-1,2}
\end{bmatrix}=\begin{bmatrix}d_{2,1} \\ \vdots \\ d_{n-1,1} \\ d_{n,1}
\end{bmatrix}+\begin{bmatrix}x_{2,1} \\ \vdots \\ x_{n-1,1} \\ x_{n,1}
\end{bmatrix}(\beta_1-\overline{\beta}_2)=\begin{bmatrix}d_{2,1} \\ \vdots \\ d_{n-1,1} \\ d_{n,1}
\end{bmatrix}+\begin{bmatrix}\Gamma_{2,1} \\ \vdots \\ \Gamma_{n-1,1} \\ x_{n,1}
\end{bmatrix}(\beta_1-\overline{\beta}_2),
$$
where $\mathbb A_n$ is given by \eqref{4.16}. Taking into account the two-diagonal structure 
\eqref{4.16}
of  $\mathbb A_n$, we derive from the last equation
\begin{align}
x_{1,2}&=(\alpha_1-\overline{\alpha}_{2})^{-1}(d_{2,1}+\Gamma_{1,1}(\beta_1-\overline{\beta}_2)),\label{4.38}\\
x_{k,2}&=(\alpha_k-\overline{\alpha}_{k+1})^{-1}\left(d_{k+1,1}+
\Gamma_{k+1,1}(\beta_1-\overline{\beta}_2)-\Gamma_{k-1,2}
\right), \quad 2\le k\le  n-2.\notag
\end{align}
Comparing \eqref{4.38} with \eqref{4.9}, we see that $x_{1,2}=\Gamma_{1,2}$ and that recursions
defining  $x_{k,2}$ and $\Gamma_{k,2}$ in terms of $x_{k-1,2}$ and $\Gamma_{k-1,2}$, respectively,
are identical. Therefore,
\begin{equation}
x_{k,2}=\Gamma_{k,2}\quad\mbox{for}\quad k=1,\ldots,n-2.
\label{4.39}
\end{equation}
We now choose an integer $\ell$ ($3\le \ell<m$) and assume that 
conditions \eqref{4.30} hold for all $j=1,\ldots,\ell-1$ and that 
$x_{k,j}=\Gamma_{k,j}$ for all $j<\ell$ and $k=1,\ldots,n-j$. Equating the top entries in \eqref{4.35}
(for $j=\ell$) we get 
$$
0=d_{1,\ell}+x_{1,\ell-1}(\beta_{\ell-1}-\overline{\beta}_\ell)+x_{1,\ell-2}=
d_{1,\ell}+\Gamma_{1,\ell-1}(\beta_{\ell-1}-\overline{\beta}_\ell)+\Gamma_{1,\ell-2}
$$ 
which is the $\ell$-th condition in \eqref{4.30}. Equating all other entries in the same equation gives
$$
\mathbb A_n \begin{bmatrix}x_{1,\ell} \\ \vdots \\ x_{n-1,\ell}
\end{bmatrix}=\begin{bmatrix}d_{2,\ell} \\ \vdots \\ d_{n,\ell}
\end{bmatrix}+\begin{bmatrix}x_{2,\ell-1} \\ \vdots \\ 
x_{n,\ell-1}\end{bmatrix}(\beta_{\ell-1}-\overline{\beta}_\ell)
+\begin{bmatrix}x_{2,\ell-2} \\ \vdots \\ x_{n,\ell-2}\end{bmatrix}
$$
which implies (similarly to \eqref{4.36} and \eqref{4.38},
\begin{align*}
x_{1,\ell}&=(\alpha_\ell-\overline{\alpha}_{\ell+1})^{-1}(d_{2,\ell}+
x_{2,\ell-1}(\beta_1-\overline{\beta}_2)+x_{2,\ell-2}),\\
x_{k,\ell}&=(\alpha_k-\overline{\alpha}_{k+1})^{-1}\left(d_{k+1,\ell}+
x_{k+1,\ell-1}(\beta_{\ell-1}-\overline{\beta}_\ell)+x_{k+1,\ell-2}-x_{k-1,\ell}\right)
\end{align*}
for $k=2,\ldots,n$. At least for $k\le n-\ell$, we may use assumptions 
$x_{k,j}=\Gamma_{k,j}$ to rewrite the above equalities (more precisely, the right
hand side expressions in the above equalities) as 
\begin{align}
x_{1,\ell}&=(\alpha_\ell-\overline{\alpha}_{\ell+1})^{-1}(d_{2,\ell}+
\Gamma_{2,\ell-1}(\beta_1-\overline{\beta}_2)+\Gamma_{2,\ell-2}),\label{4.40}\\
x_{k,\ell}&=(\alpha_k-\overline{\alpha}_{k+1})^{-1}\left(d_{k+1,\ell}+
\Gamma_{k+1,1}(\beta_{\ell-1}-\overline{\beta}_\ell)+\Gamma_{k+1,\ell-2}-x_{k-1,\ell}\right).\notag
\end{align}      
We again compare \eqref{4.39} with \eqref{4.9} to see that $x_{1\ell}=\Gamma_{1,\ell}$ and 
that $x_{k,\ell}$ and $\Gamma_{k,\ell}$ are defined by the same recursion. Therefore 
$x_{k,\ell}=\Gamma_{k,\ell}$ for $k=1,\ldots,n-\ell$, and then we conclude by induction 
that equalities \eqref{4.30} hold for $j=1,\ldots,m$ and that $x_{k,j}=\Gamma_{k,j}$ for all $k<n$ and $j<m$  such 
that $k+j\le n$.

\smallskip

It remains to show that conditions \eqref{4.30} are sufficient for the equation \eqref{4.6} to have a solution.
Let us extend the spherical chain $\balpha=(\alpha_1,\ldots,\alpha_n)$ to a spherical chain
$\widehat\balpha=(\alpha_1,\ldots,\alpha_n,\alpha_{n+1},\ldots,\alpha_{n+m})$. We may let, for example, 
$\alpha_{n+i}=\alpha_n$ for $i=1,\ldots,m$. We then let 
$$
\widehat A={\mathcal J}_{\widehat \balpha}=
\left[\alpha_i\delta_{i,j}+\delta_{i,j+1}\right]_{i,j=1}^{n+m}=\begin{bmatrix}A & 0 \\ * & *\end{bmatrix}
$$
and 
$$
\widehat A^\prime=
\left[\overline{\alpha}_i\delta_{i,j}-\delta_{i,j+1}\right]_{i,j=1}^{n+m}=\begin{bmatrix}A^\prime & 0 \\ * & 
*\end{bmatrix}
$$
to be the corresponding extensions of $A$ and $A^\prime$ (see \eqref{3.4a}) and we arbitrarily extend the given 
matrix $C$ to 
$$
\widehat C=\begin{bmatrix}C \\ 
C^\prime\end{bmatrix}=\left[c_{i,j}\right]_{i=1,\ldots,n+m}^{j=1,\ldots,m}\in\bH^{(n+m)\times m}
$$
(we may choose $C^\prime=0$). We then consider the matrix $\widehat D^\prime=\widehat CB-\widehat A^\prime\widehat 
C\in\bH^{(n+m)\times n}$ 
which 
is indeed an extension of the matrix \eqref{4.7}:
\begin{align*}
\widehat D^\prime=\widehat CB-\widehat A^\prime\widehat C&=
\begin{bmatrix}C \\ C^\prime\end{bmatrix}B-\begin{bmatrix}A^\prime & 0 \\ * &
*\end{bmatrix}\begin{bmatrix}C \\ C^\prime\end{bmatrix}\\
&=\begin{bmatrix}CB-A^\prime C \\ D^\prime\end{bmatrix}
=\begin{bmatrix}D \\ D^\prime\end{bmatrix}=\left[d_{i,j}\right]_{i=1,\ldots,n+m}^{j=1,\ldots,m}.
\end{align*}
We now use recursions \eqref{4.9}, \eqref{4.10} to define the elements $\Gamma_{k,j}$ for all 
positive $k<n+m$ and $j\le m$ such that $k+j\le n+m$; see Remark \ref{R:4.1} (part (1)). 
By Remark \ref{R:4.1} (part (3)) and by formula \eqref{4.9}, the elements $\Gamma_{k,j}$ (for $k+j\le n$) are 
completely determined by the entries of the original matrices $A$, $B$ and $C$. Since equalities \eqref{4.30}
hold (i.e., the elements $S_j$ in \eqref{4.11} are equal to zero for $j=1,\ldots,m$, Lemma \ref{L:6.3} 
applies
to the extended set $\{\Gamma_{k,j}: \, k+j\le n+m\}$ and guarantees that equalities \eqref{4.12} hold for 
all $k<n+m$ and $j\le m$ such that $k+j\le n+m$. In particular, equalities \eqref{4.12} hold for 
all $1\le k\le n$ and $1\le j\le m$. Furthermore, if we let 
\begin{equation}
\Gamma=\left[\Gamma_{k,j}\right]_{k=1,\ldots,n}^{j=1,\ldots,m}\in\bH^{n\times m},
\label{4.41}
\end{equation}
then (as it is readily seen from \eqref{4.1}) $\Delta_{k,j}$ defined as in \eqref{4.12} is the $(k,j)$-entry
in the matrix
$$
\Delta=\left[\Delta_{k,j}\right]_{k,j}=A\Gamma-\Gamma B-C.
$$
Equalities \eqref{4.12} for all $1\le k\le n$ and $1\le j\le m$ mean that $\Delta=0$, i.e., that 
$\Gamma$ is a solution to the equation \eqref{4.6}.
\end{proof}
The last theorem suggested an algorithm for constructing a particular solution for a solvable
singular Sylvester equation. We record it for the convenience of future reference.
\begin{algorithm}
{\rm Given $A$ and $B$ of the form \eqref{4.1} where $\balpha=(\alpha_1,\ldots,\alpha_n)$ and 
$\bbeta=(\beta_1,\ldots,\beta_n)$ ($m\le n$) are spherical chains from the same conjugacy class, and given 
$C=[c_{i,k}]\in\bH^{n\times m}$,
\begin{enumerate}
\item Compute $d_{i,j}$ for $i+j\le n$ by formula \eqref{4.8}.
\item Compute $\Gamma_{i,j}$ for $i+j\le n$ by formulas \eqref{4.9}, \eqref{4.10}.
\item Verify equalities \eqref{4.11} for $j=1,\ldots,m$. If they hold true, 
equation \eqref{4.6} has a solution.
\item Let $\alpha_{n+i}=\alpha_{n}$ and $c_{n+i,j}=0$ for $i,j=1,\ldots,m$.
\item Compute $d_{i,j}$ for $n+1\le i+j\le n+m$ by formula \eqref{4.8}.
\item Compute $\Gamma_{i,j}$ for $n+1\le i+j\le n+m$ by formula \eqref{4.9}. 
\end{enumerate}
The matrix \eqref{4.41} is a particular solution of the Sylvester equation \eqref{4.6}}
\label{A:4.5}
\end{algorithm} 
It is of particular interest to write necessary and sufficient conditions 
\eqref{4.30} for solvability of a singular Sylvester equation \eqref{4.6} 
in terms of given matrices $A$, $B$ and $C$. We were able to establish such formulas
only in the case where $A$ and $B$ are Jordan blocks (i.e., $A$ and $B$ are based on spherical 
chains of the form \eqref{2.3}). Theorem \ref{T:4.6} below is the quaternion analog of 
a result of Ma \cite{ma}; see \eqref{1.7}.
\begin{theorem}
Let $A$ and $B$ be of the form \eqref{4.1} with $\alpha_i=\alpha\sim\beta=\beta_j$
($1\le i\le n$, $1\le j\le m\le n$). Given a matrix 
$C=[c_{k,j}]\in\bH^{n\times m}$, let 
\begin{align*}
d_{1,1}&=c_{1,1}\beta-\overline{\alpha}c_{1,1},\\
d_{1,j}&=c_{1,j}\beta-\overline{\alpha}c_{1,j}+c_{1,j-1} \; \; (j\ge 2),\\
d_{k,1}&=c_{k,1}\beta-\overline{\alpha}c_{k,1}+c_{k-1,1} \; \; (k\ge 2),\\
d_{k,j}&=c_{k,j}\beta-\overline{\alpha}c_{k,j}+c_{k-1,j}+c_{k,j-1} \; \; (k,j\ge 2).
\end{align*} 
Sylvester equation \eqref{4.6} has a solution if and only (c.f. with \eqref{1.7})
\begin{equation}
\sum_{2\ell\le j}{\rm Im}(\alpha)\cdot d_{2\ell+1,j-2\ell}
+\sum_{2\ell\le j}d_{2\ell,j-2\ell+1}\cdot{\rm Im}(\beta)=0
\quad\mbox{for} \quad j=1,\ldots,m.
\label{4.42}
\end{equation}
\label{T:4.6}
\end{theorem}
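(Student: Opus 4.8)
The plan is to build on Theorem \ref{T:4.4}, which already reduces solvability of \eqref{4.6} to the vanishing of the quantities
\[
S_j=d_{1,j}+\Gamma_{1,j-1}(\beta_{j-1}-\overline{\beta}_j)+\Gamma_{1,j-2}\qquad(j=1,\ldots,m)
\]
from \eqref{4.11}. Thus everything reduces to the purely algebraic claim that, in the Jordan case $\alpha_i=\alpha$, $\beta_j=\beta$, the system $S_1=\cdots=S_m=0$ is equivalent to \eqref{4.42}. First I would record the three simplifications that make the Jordan case special: (i) $\alpha_k-\overline{\alpha}_{k+1}=2\,{\rm Im}(\alpha)$ and $\beta_{j-1}-\overline{\beta}_j=2\,{\rm Im}(\beta)$; (ii) since $\alpha\sim\beta$, the characterization \eqref{2.1} gives ${\rm Re}(\alpha)={\rm Re}(\beta)$ and $|{\rm Im}(\alpha)|=|{\rm Im}(\beta)|$, so that ${\rm Im}(\alpha)^2={\rm Im}(\beta)^2=-|{\rm Im}(\alpha)|^2$ is a \emph{real} scalar (hence central), and ${\rm Im}(\alpha)$ is invertible; (iii) a direct check from \eqref{4.8} that $d_{k,j}={\rm Im}(\alpha)c_{k,j}+c_{k,j}{\rm Im}(\beta)+c_{k,j-1}+c_{k-1,j}$.

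The heart of the argument is the identity
\[
{\rm Im}(\alpha)\,S_j=\sum_{2\ell\le j}{\rm Im}(\alpha)\,d_{2\ell+1,j-2\ell}+\sum_{2\ell\le j}d_{2\ell,j-2\ell+1}\,{\rm Im}(\beta),\qquad j=1,\ldots,m .
\]
The right-hand side is exactly the left-hand side of \eqref{4.42}, and it is a sum over the single anti-diagonal $\{(k,j+1-k)\}$ in which odd rows are multiplied by ${\rm Im}(\alpha)$ on the left and even rows by ${\rm Im}(\beta)$ on the right. Once this identity is in hand, invertibility of ${\rm Im}(\alpha)$ gives $S_j=0\Leftrightarrow\eqref{4.42}$ term by term, and Theorem \ref{T:4.4} finishes the proof. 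To prove the identity I would unfold the recursion \eqref{4.9} --- equivalently iterate its rearranged form $2\,{\rm Im}(\alpha)\,\Gamma_{k,j}+\Gamma_{k-1,j}=d_{k+1,j}+\Gamma_{k+1,j-2}+2\Gamma_{k+1,j-1}\,{\rm Im}(\beta)$, in which $S_j$ is precisely the ``$k=0$ defect'' (the left side vanishes at $k=0$ by the initial conditions, while the right side becomes $S_j$) --- and then carry out the induction on $j$.

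The main obstacle is exactly this induction. The recursion for the top-row entries $\Gamma_{1,j}$ feeds on the entries $\Gamma_{2,\bullet},\Gamma_{3,\bullet},\dots$, so a plain induction on $j$ does not close; one must unfold the full triangular array $\{\Gamma_{k,j}\}$, expressing $S_j$ as a sum over the lattice paths generated by the two substitution moves $(k,j)\mapsto(k+1,j-1)$ and $(k,j)\mapsto(k+1,j-2)$. The saving grace, and the reason the a priori complicated expression collapses to a single anti-diagonal, is simplification (ii): every product of two imaginary parts that the unfolding produces is the central real scalar ${\rm Im}(\alpha)^2={\rm Im}(\beta)^2$, so the nested factors $(2\,{\rm Im}(\alpha))^{-1}$ and the powers of ${\rm Im}(\beta)$ telescope, cancelling all ``interior'' contributions and leaving only the top anti-diagonal.

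Finally, it is worth recording the conceptual content. Setting $\Lambda_S=\sum_{k+j=S}\big({\rm Im}(\alpha)c_{k,j}+c_{k,j}{\rm Im}(\beta)\big)$, a short computation using (iii) and the centrality of ${\rm Im}(\alpha)^2$ shows that the right-hand side of the displayed identity equals ${\rm Im}(\alpha)\,\Lambda_{j+1}+\Lambda_j$; hence \eqref{4.42} is equivalent (by an elementary induction, since $\Lambda_1=0$ and ${\rm Im}(\alpha)$ is invertible) to $\Lambda_S=0$ for $S=2,\ldots,m+1$. This is the transparent quaternion analogue of Ma's anti-diagonal conditions \eqref{1.7}: each condition says that the corresponding anti-diagonal of $C$, weighted on the left by ${\rm Im}(\alpha)$ and on the right by ${\rm Im}(\beta)$, sums to zero. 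An independent sanity check on necessity comes from summing the entrywise equations of \eqref{4.6} along an anti-diagonal: the displaced terms telescope (for $S\le m+1$) and applying the real-linear map $x\mapsto{\rm Im}(\alpha)x+x{\rm Im}(\beta)$, which annihilates $\alpha x-x\beta$ by (ii), reproduces exactly $\Lambda_S=0$.
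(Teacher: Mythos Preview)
Your approach is essentially the paper's: reduce to Theorem~\ref{T:4.4} and then establish the identity ${\rm Im}(\alpha)\,S_j=\text{(left side of \eqref{4.42})}$ by iteratively unfolding the recursion \eqref{4.9}, using that $(\alpha-\overline{\alpha})^2=(\beta-\overline{\beta})^2$ is real and hence central. The paper carries this out explicitly---multiplying $S_j$ by $\alpha-\overline{\alpha}$ and substituting \eqref{4.9} first for $\Gamma_{1,j-1}$, then for $\Gamma_{2,j-2}$, and so on---and the iteration is in fact simpler than your lattice-path language suggests: each step peels off exactly one pair of $d$-terms and shifts the residual $\Gamma$'s two positions down the anti-diagonal, so the process terminates cleanly. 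Your additional reformulation via $\Lambda_S=\sum_{k+j=S}\bigl({\rm Im}(\alpha)c_{k,j}+c_{k,j}{\rm Im}(\beta)\bigr)$, together with the direct necessity check (sum the entrywise equations along an anti-diagonal and apply $x\mapsto{\rm Im}(\alpha)x+x{\rm Im}(\beta)$), is a tidy observation not in the paper and gives a more transparent quaternionic analogue of \eqref{1.7}.
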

\begin{proof}
We will show that in the present setting, conditions \eqref{4.30} are equivalent to 
\eqref{4.42}. We fist observe that $|{\rm Im}(\alpha)|=|{\rm Im}(\beta)|$ by characterization 
\eqref{2.1}.
Therefore, $(\alpha-\overline{\alpha})^2=(\beta-\overline{\beta})^2$ is a negative number
and therefore 
\begin{equation}
(\alpha-\overline{\alpha})^{-2}d(\beta-\overline{\beta})^2=d \quad\mbox{for any}\quad d\in\bH.
\label{4.43}
\end{equation}
Making use of notation \eqref{4.11} and formula \eqref{4.9} for $\Gamma_{1,j-1}$ (specified to the 
present particular case) we get
\begin{align}
(\alpha-\overline{\alpha})S_j=&
(\alpha-\overline{\alpha})\left[ 
d_{1,j}+\Gamma_{1,j-1}(\beta-\overline{\beta})+\Gamma_{1,j-2}\right]\notag\\
=&(\alpha-\overline{\alpha})(d_{1,j}+\Gamma_{1,j-2})+
\left(d_{2,j-1}+\Gamma_{2,j-3}+\Gamma_{2,j-2}(\beta-\overline{\beta})\right)(\beta-\overline{\beta})
\notag\\
=&(\alpha-\overline{\alpha})d_{1,j}+d_{2,j-1}(\beta-\overline{\beta})\notag\\
&+
\Gamma_{2,j-3}(\beta-\overline{\beta})+(\alpha-\overline{\alpha})^2\Gamma_{2,j-2}+
(\alpha-\overline{\alpha})\Gamma_{1,j-2},\label{4.44}
\end{align}
where the last equality holds due to \eqref{4.43}. 
Making subsequent use of formula \eqref{4.9} for $\Gamma_{2,j-2}$ and then for 
$\Gamma_{3,j-3}$, we conclude that 
the sum of the three rightmost terms in \eqref{4.44} equals
\begin{align*}
&\Gamma_{2,j-3}(\beta-\overline{\beta})+
(\alpha-\overline{\alpha})\left[d_{3,j-2}+\Gamma_{3,j-4}
+\Gamma_{3,j-3}(\beta-\overline{\beta})\right]\notag\\
&=(\alpha-\overline{\alpha})\left[d_{3,j-2}+\Gamma_{3,j-4}\right]
+\left[d_{4,j-3}+\Gamma_{4,j-5}+\Gamma_{4,j-4}(\beta-\overline{\beta})\right](\beta-\overline{\beta})
\notag\\
&=(\alpha-\overline{\alpha})d_{3,j-2}+d_{4,j-3}(\beta-\overline{\beta})\notag\\ 
&\quad +
\Gamma_{4,j-5}(\beta-\overline{\beta})+(\alpha-\overline{\alpha})^2\Gamma_{4,j-4}+
(\alpha-\overline{\alpha})\Gamma_{3,j-4}.
\end{align*}
which being substituted into \eqref{4.44} gives
\begin{align}
(\alpha-\overline{\alpha})S_j=&(\alpha-\overline{\alpha})(d_{1,j}+d_{3,j-2}) 
+(d_{2,j-1}+d_{4,j-3})(\beta-\overline{\beta})\notag\\
&+\Gamma_{4,j-5}(\beta-\overline{\beta})+(\alpha-\overline{\alpha})^2\Gamma_{4,j-4}+
(\alpha-\overline{\alpha})\Gamma_{3,j-4}.\label{4.45}            
\end{align}
The step which led us from \eqref{4.44} to \eqref{4.45} can be repeated indefinitely.
After $\ell$ iterations we get 
\begin{align} 
(\alpha-\overline{\alpha})S_j=&(\alpha-\overline{\alpha})\sum_{k=0}^{\ell-1}d_{2k+1,j-2k}
+\sum_{k=1}^{\ell}d_{2k,j-2k+1}(\beta-\overline{\beta})\label{4.46}\\
&+\Gamma_{2\ell,j-2\ell-1}(\beta-\overline{\beta})+(\alpha-\overline{\alpha})^2\Gamma_{2\ell,j-2\ell}+
(\alpha-\overline{\alpha})\Gamma_{2\ell-1,j-2\ell}.\notag
\end{align}
Recall that $\Gamma_{k,j}$ and $d_{k,j}$ are equal zero whenever one of the indices is 
nonpositive. If we choose $\ell$ be greater than $j/2$ in \eqref{4.45} and take into account only 
nonzero terms, we conclude that $(\alpha-\overline{\alpha})S_j$ equals the expression 
on the left side of \eqref{4.42}. Thus, conditions \eqref{4.42} are equivalent (since $\alpha\neq 
\overline{\alpha}$ to those in \eqref{4.30} and applying Theorem \ref{T:4.4} completes the proof.
\end{proof}

\section{Singular case: parametrization of all solutions}
\setcounter{equation}{0}

Since the equation \eqref{4.6} is linear, all its solutions $X$ are described by 
$X=\Gamma+Y$, where $Y$ is the general solution of the homogeneous equation
\begin{equation}
AY-YB=0. 
\label{5.1}
\end{equation}
we will use notation $\mathcal D_\ell(Y)=\{Y_{k,j}: \, k+j=\ell+1\}$  
for the $\ell$-th counter-diagonal of the matrix $Y=[Y_{k,j}]$. 

\smallskip

In this final part of the paper, we will present a parametrization of the solution set of the 
homogeneous equation \eqref{5.1} with free parameters $\mu_j\in\Pi_{\alpha_n,\beta_j}$
($j=1,\ldots,m$) from the planes $\Pi_{\alpha_n,\beta_j}\subset \bH$ defined as in \eqref{4.4}.
The latter memberships mean, by Lemma \ref{L:6.2}, that 
\begin{equation}
\alpha_n\mu_j=\mu_j\beta_j \quad\mbox{for}\quad j=1,\ldots,m.
\label{5.2} 
\end{equation}
For any fixed tuple ${\boldsymbol \mu}=(\mu_1,\ldots,\mu_m)$ of such parameters, we recursively define 
the entries $Y_{k,j}$ of the matrix $Y=[Y_{k,j}]\in\bH^{n\times m}$ by the formulas
\begin{align}
Y_{k,j}=&0\quad\mbox{for all}\quad k+j\le n,\quad Y_{n,1}=\mu_1,\label{5.3}\\
Y_{k,j}=&(\overline{\alpha}_{k+1}-\alpha_k)^{-1}\left[Y_{k+1,j-1}(\overline{\beta}_j-\beta_{j-1})
-Y_{k+1,j-2}+Y_{k-1,j}\right]\label{5.4}\\
&(n-m+1\le k\le n-1; \; \; n-k+1\le j\le n+m-k),\notag\\
Y_{n,j+1}=&\mu_{j+1}-\sum_{\ell=1}^j \left(\prod_{i=1}^{\substack{\curvearrowright \\ \ell}}
(\overline{\alpha}_{n-i+1}-\alpha_{n-i})^{-1}\right)Y_{n-\ell+1,j}\quad (2\le j\le m).\label{5.5}
\end{align}
For the sake of clarity, we display formula \eqref{5.5} in a less compact form
\begin{align*}
Y_{n,j+1}=&\mu_{j+1}-(\overline{\alpha}_n-\alpha_{n-1})^{-1}Y_{n,j}\notag\\
&-(\overline{\alpha}_n-\alpha_{n-1})^{-1}(\overline{\alpha}_{n-1}-\alpha_{n-2})^{-1}Y_{n-1,j}-\ldots\notag\\
&-(\overline{\alpha}_n-\alpha_{n-1})^{-1}\cdots
(\overline{\alpha}_{n-j+1}-\alpha_{n-j})^{-1}Y_{n-j+1,j}.
\end{align*}
The construction of $Y$ goes as follows: with \eqref{5.3} in hands, we use formula \eqref{5.4} to compute all 
entries in the counter-diagonal $\mathcal D_{n+1}(Y)$, it is not hard to see that
\begin{equation}
Y_{n-j,j+1}=\left(\prod_{i=1}^{\substack{\curvearrowleft \\ j}}
(\overline{\alpha}_{n-i+1}-\alpha_{n-i})^{-1}\right)\mu_1 \left(\prod_{i=1}^{\substack{\curvearrowright \\ j}}   
(\overline{\beta}_{i+1}-\beta_{i})\right).
\label{5.5a}
\end{equation}
In particular, for $j=2$ we have 
\begin{equation}
Y_{n-1,2}=(\overline{\alpha}_n-\alpha_{n-1})^{-1}\mu_1 \, (\overline{\beta}_{2}-\beta_{1})
\label{5.6}
\end{equation}
and then we use \eqref{5.5} to compute
\begin{equation}
Y_{n,2}=\mu_2-(\overline{\alpha}_n-\alpha_{n-1})^{-1}Y_{n,1}=\mu_2-(\overline{\alpha}_n-\alpha_{n-1})^{-1}\mu_{1},
\label{5.7}
\end{equation}
and then again \eqref{5.4} to compute the entries in $\mathcal D_{n+2}(Y)$. Once all 
counter-diagonals $\mathcal D_{\ell}(Y)$ are found for $r=2,\ldots,n+j+1$
(and in particular, $Y_{k,j}$ are specified for all $k=1,\ldots,n$), we use \eqref{5.5} to find 
$Y_{n,j+1}$ and then \eqref{5.4}to compute the next counter-diagonals $\mathcal D_{n+j+2}(Y)$.
\begin{theorem}
Let ${\boldsymbol \mu}=(\mu_1,\ldots,\mu_m)$ satisfy \eqref{5.2}. Then the matrix $Y\in\bH^{n\times m}$
constructed as in \eqref{5.3}--\eqref{5.5} is a solution to the equation \eqref{5.1}. Moreover, any 
solution $Y$ to the equation \eqref{5.1} arises in this way.
\label{T:5.1}
\end{theorem}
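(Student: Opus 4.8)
The plan is to rewrite \eqref{5.1} entrywise and to reduce the whole theorem to one identity satisfied by the bottom row of $Y$. With the convention $Y_{0,j}=Y_{k,0}=0$, equation \eqref{5.1} is equivalent to the vanishing of the $(k,j)$-entry of $AY-YB$, namely
\[
\Delta_{k,j}:=\alpha_kY_{k,j}-Y_{k,j}\beta_j+Y_{k-1,j}-Y_{k,j-1},\qquad 1\le k\le n,\ 1\le j\le m .
\]
Exactly as in the proofs of Theorems \ref{T:3.2a} and \ref{T:4.4}, multiply $AY-YB$ by $A^\prime$ on the left and by $B$ on the right and subtract; by \eqref{3.4b} and the equality $\cX_V(B)=\widetilde{B}$ established there one gets the identity
\[
A^\prime(AY-YB)-(AY-YB)B=-\bigl(\widetilde{A}Y-Y\widetilde{B}\bigr).
\]
Consequently, if $\widetilde{A}Y-Y\widetilde{B}=0$ then $\Delta=[\Delta_{k,j}]$ obeys $A^\prime\Delta-\Delta B=0$, i.e. $\overline{\alpha}_k\Delta_{k,j}-\Delta_{k,j}\beta_j=\Delta_{k-1,j}+\Delta_{k,j-1}$. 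Reading this at $k=n$ and inducting downward in $k$ shows that the vanishing of the bottom row $\Delta_{n,\cdot}$ alone forces $\Delta\equiv0$. So the whole problem collapses to two points: that the construction yields $\widetilde{A}Y-Y\widetilde{B}=0$, and that it yields $\Delta_{n,j}=0$.

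The first point is immediate: solving the $(k+1,j)$-entry of \eqref{4.31} (with $D=0$) for $Y_{k,j}$ returns precisely \eqref{5.4}, and the top row of \eqref{4.31} holds automatically because the entries $Y_{1,\cdot}$ occurring there vanish by the triangular relations $Y_{k,j}=0$ ($k+j\le n$). Thus a $Y$ assembled from \eqref{5.3}--\eqref{5.5} satisfies $\widetilde{A}Y-Y\widetilde{B}=0$; conversely any solution of \eqref{5.1} satisfies \eqref{4.31}, hence \eqref{5.4}, and—using Theorem \ref{T:4.4} with $C=0$, which makes all $\Gamma_{i,j}=0$ and so yields the triangular vanishing—is determined by its bottom row through \eqref{5.4}. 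Both the constructed matrix and an arbitrary solution therefore fall under the reduction of the first paragraph, and only the bottom row is left to understand.

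For the bottom row, set $P_\ell=\prod_{i=1}^{\substack{\curvearrowright \\ \ell}}(\overline{\alpha}_{n-i+1}-\alpha_{n-i})^{-1}$ and read \eqref{5.5} as defining $\mu_j=Y_{n,j}+\sum_{\ell=1}^{j-1}P_\ell Y_{n-\ell+1,j-1}$. The key claim, proved by induction on $j$, is that
\[
\Delta_{n,j}=\alpha_n\mu_j-\mu_j\beta_j .
\]
This finishes everything. For sufficiency, the hypothesis $\mu_j\in\Pi_{\alpha_n,\beta_j}$ (that is, \eqref{5.2}) makes the right-hand side zero, so $\Delta_{n,\cdot}=0$ and hence $\Delta\equiv0$ by the first paragraph, i.e. $Y$ solves \eqref{5.1}. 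For necessity, a solution has $\Delta_{n,j}=0$, whence $\alpha_n\mu_j=\mu_j\beta_j$, i.e. $\mu_j\in\Pi_{\alpha_n,\beta_j}$ by Lemma \ref{L:6.2}, and the matrix is reproduced by \eqref{5.3}--\eqref{5.5}. To prove the displayed identity I would use the intertwining $\alpha_nP_\ell=P_\ell\alpha_{n-\ell}$ (obtained by chaining \eqref{4.13} along the spherical chain $\balpha$), then unfold $Y_{n-1,j}$ all the way down its column by repeated use of \eqref{5.4} until the triangular zeros cut it off, and finally collapse the resulting sum with the real-part relation ${\rm Re}\,\beta_j={\rm Re}\,\beta_{j-1}$ (so that $\overline{\beta}_j+\beta_j=\overline{\beta}_{j-1}+\beta_{j-1}$) and the elementary implication $\alpha\mu=\mu\beta\Rightarrow\overline{\alpha}\mu=\mu\overline{\beta}$; the inductive hypothesis that $\mu_1,\dots,\mu_{j-1}$ already lie in their planes is exactly what annihilates the leftover terms.

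The main obstacle is this bottom-row identity. Everything else is either a one-line matrix manipulation, the downward propagation of the first paragraph, or a direct appeal to Theorem \ref{T:4.4}; but matching the specific product of inverse factors $P_\ell$ built into \eqref{5.5} against the recursion \eqref{5.4}, while keeping track of a telescoping sum whose length grows with $j$ and is truncated by $Y_{n-j,j}=0$, is where the real work lies. A convenient way to organize the bookkeeping is to verify the equivalent relation $\alpha_nU_{j-1}-U_{j-1}\overline{\beta}_{j-1}=-\mu_{j-1}$ for the partial sums $U_{j-1}=\sum_{\ell=1}^{j-1}P_\ell Y_{n-\ell+1,j-1}$, which is the form in which the induction closes.
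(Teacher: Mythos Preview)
Your proposal is correct and reaches the same endpoint as the paper, but the propagation mechanism you set up is genuinely different and, in one respect, cleaner. The paper establishes vanishing of $\Delta$ counter-diagonal by counter-diagonal, using the recursion \eqref{4.6fb} (rewritten as \eqref{5.9}) inherited from Remark~\ref{R:4.5}; each inductive step shows $\Delta_{n,j+1}=0$ via the computation \eqref{5.15} and then pushes along $\mathcal D_{r+1}(\Delta)$ through implication \eqref{5.10}. You instead observe the single matrix identity $A^\prime\Delta-\Delta B=-(\widetilde{A}Y-Y\widetilde{B})$, read it entrywise, and propagate column by column: once columns $1,\dots,j-1$ of $\Delta$ vanish, the relation $\overline{\alpha}_k\Delta_{k,j}-\Delta_{k,j}\beta_j=\Delta_{k-1,j}$ drives $\Delta_{\cdot,j}$ to zero from the bottom up. This buys you a decoupling the paper does not quite have: you verify $\widetilde{A}Y-Y\widetilde{B}=0$ once, directly from the construction, and then the only residual content is the bottom-row identity.

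For that identity your sketch is on target, and the computation you outline is essentially the paper's derivation \eqref{5.13}--\eqref{5.16}: unfold $Y_{n-1,j+1}-Y_{n,j}$ via \eqref{5.4}, use the already-established vanishing in column $j$ (your $\Delta_{k,j}=0$, the paper's \eqref{5.14}) to telescope, and arrive at $\alpha_n\mu_{j+1}-\mu_{j+1}\beta_{j+1}$. One point worth stating explicitly in your write-up is that the ``identity'' $\Delta_{n,j}=\alpha_n\mu_j-\mu_j\beta_j$ is not unconditional; it requires that columns $1,\dots,j-1$ of $\Delta$ already vanish, which in the sufficiency direction comes from the hypothesis $\mu_1,\dots,\mu_{j-1}\in\Pi_{\alpha_n,\beta_i}$ together with your propagation, and in the necessity direction is free since $\Delta\equiv 0$. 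You acknowledge this, but the phrasing ``the key claim\ldots is that $\Delta_{n,j}=\alpha_n\mu_j-\mu_j\beta_j$'' slightly oversells it as a standalone formula.
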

\begin{proof}
The recursion formula \eqref{5.4} is the homogeneous version of \eqref{4.9} (with all $c_{k,j}$
or, equivalently, all $d_{k,j}$ equal zero). Hence, some conclusions will be obtained 
by applying the corresponding counter-parts from the previous section. In analogy to the non-homogeneous 
case, we introduce 
\begin{equation}
\Delta_{k,j}:=\alpha_kY_{k,j}-Y_{k,j}\beta_j+Y_{k-1,j}-Y_{k,j-1}
\label{5.8}
\end{equation}
for $k=1,\ldots,n$ and $j=1,\ldots,m$, and recall relations 
\eqref{4.6fb} holding for all $k\ge 3$ and $j\ge 2$. We write them
equivalently as 
\begin{equation}
\Delta_{k-1,j}=(\overline{\alpha}_{k+1}-\alpha_k)\Delta_{k,j}-\Delta_{k+1,j-1}
(\overline{\beta}_j-\beta_{j-1})+\Delta_{k+1,j-2}
\label{5.9}
\end{equation}
and observe that $\Delta_{k+1,j-2}$ and $\Delta_{k-1,j}$ are the consequtive entries 
on the $(k+j+1)$-th counter-diagonal of the matrix $\Delta=[\Delta_{k,j}]$ whereas 
$\Delta_{k+1,j-1}$ and $\Delta_{k+1,j-2}$ are the entries from the ``previous"
counter-diagonal $\mathcal D_{k+j}(\Delta)$. Thus, relations \eqref{5.9} guarantee that once 
all entries in $\mathcal D_{r}(\Delta)$ are zeros for some $r>n$ and the bottom entry of the
next counter-diagonal $\mathcal D_{r+1}(\Delta)$ is zero, then all entries in $\mathcal D_{r+1}(\Delta)$
are zeros. In other words, the following implication holds true: 
{\em for any fixed $r$ ($n< r<n+m$),
\begin{equation}
\left\{\begin{array}{l}
\Delta_{k,r-k}=0 \; (r-m\le k\le n),\\
\Delta_{n,r+1-n}=0
\end{array}\right. \; \Longrightarrow \; 
\Delta_{k,r+1-k}=0 \; (r-m+1\le k\le n). 
\label{5.10}
\end{equation}}
Furthermore, due to \eqref{5.3}, 
$\Delta_{k,j}=0$ for all $(k,j)$ such that $k+j\le n$. We next observe that due to \eqref{5.2},
\begin{equation}
\Delta_{n,1}=\alpha_n\mu_1-\mu_1\beta_1=0.
\label{5.11}
\end{equation}
Making use of \eqref{5.6}, \eqref{4.13} and \eqref{5.11} we get
\begin{align*}
\Delta_{n-1,2}&=\alpha_{n-1}Y_{n-1,2}-Y_{n-1,2}\beta_2\notag\\
&=\alpha_{n-1}(\overline{\alpha}_n-\alpha_{n-1})^{-1}\mu_1 \, (\overline{\beta}_{2}-\beta_{1})
-(\overline{\alpha}_n-\alpha_{n-1})^{-1}\mu_1 \, (\overline{\beta}_{2}-\beta_{1})\beta_2\notag\\
&=(\overline{\alpha}_n-\alpha_{n-1})^{-1}\left(\alpha_n\mu_1-\mu_1\beta_1\right)
(\overline{\beta}_{2}-\beta_{1})=0.
\end{align*}
Since $\Delta_{k,j}=0$ for all $k,j$ such that $k+j=n$ and since $\Delta_{n-1,2}=0$, we recursively get from 
\eqref{5.9} that  $\Delta_{k,j}=0$ for all $k,j$ such that $k+j=n+1$ (i.e., that all the entries in 
$\mathcal D_{n}(\Delta)$ are zeros).  To use induction principle, let us assume that 
$\Delta_{k,i}=0$ for all $k,i$ such that $k+i\le r$ for some $r\ge n+2$ and show that $\Delta_{k,i}=0$
also for all $k,i$ such that $k+i= r+1$. Let $j:=n-r$. By the assumption, we have in particular,
$\Delta_{k,j}=0$ for all $k=1,\ldots,n$ which implies, due to formula \eqref{5.8}, equalities
\begin{equation}
\alpha_kY_{k,j}-Y_{k,j}\beta_j=Y_{k,j-1}-Y_{k-1,j}\quad\mbox{for}\quad k=2,\ldots,n.
\label{5.13}
\end{equation}
Since ${\rm Re}(\alpha_k)={\rm Re}(\beta_j)$ (by characterization \eqref{2.1}), the latter equalities can be written 
equivalently as 
\begin{equation}
\overline{\alpha}_kY_{k,j}-Y_{k,j}\overline{\beta}_j=Y_{k-1,j}-Y_{k,j-1}\quad\mbox{for}\quad k=2,\ldots,n.
\label{5.14}
\end{equation}
We next compute, for a fixed $k$ ($2\le k\le n-1)$, 
\begin{align}
&Y_{k,j+1}-Y_{k+1,j}\notag\\
&=(\overline{\alpha}_{k+1}-\alpha_{k})^{-1}\left[Y_{k+1,j}(\overline{\beta}_{j}-\beta_{j+1})
-Y_{k+1,j-1}+Y_{k-1,j+1}\right.\notag\\
&\qquad\qquad\qquad\qquad \left.-(\overline{\alpha}_{k+1}-\alpha_{k})Y_{k+1,j}\right]\notag\\
&=(\overline{\alpha}_{k+1}-\alpha_{k})^{-1}\left[\alpha_{k}Y_{k+1,j}-Y_{k+1,j}\beta_{j+1}
-Y_{k,j}+Y_{k-1,j+1}\right]\notag\\
&=\alpha_{k+1}(\overline{\alpha}_{k+1}-\alpha_{k})^{-1}Y_{k+1,j}-(\overline{\alpha}_{k+1}-\alpha_{k})^{-1}
Y_{k+1,j}\beta_{j+1}\notag\\
&\qquad+(\overline{\alpha}_{k+1}-\alpha_{k})^{-1}\left[Y_{k-1,j+1}-Y_{k,j}\right],
\end{align}
where for the first equality  we used formula \eqref{5.4} for $Y_{k,j+1}$, the second equality 
holds due to \eqref{5.14} (with $k$ replaced by $(k+1)$), and the last equality relies on \eqref{4.13} 
(for $\alpha=\alpha_{k+1}$ and $\beta=\alpha_{k}$). Writing formula \eqref{5.14} for $k=n-1$ and then
iterating it once and making use of \eqref{4.13}, we get 
\begin{align}
Y_{n-1,j+1}-Y_{n,j}=&\alpha_{n}(\overline{\alpha}_{n}-\alpha_{n-1})^{-1}Y_{n,j}-(\overline{\alpha}_{n}-\alpha_{n-1})^{-1}
Y_{n,j}\beta_{j+1}\notag\\
&+(\overline{\alpha}_{n}-\alpha_{n-1})^{-1}
\left[\alpha_{n-1}(\overline{\alpha}_{n-1}-\alpha_{n-2})^{-1}Y_{n-1,j}\right.\notag\\
&\left.-(\overline{\alpha}_{n-1}-\alpha_{n-2})^{-1}
Y_{n-1,j}\beta_{j+1}\right.\notag\\
&\left.+(\overline{\alpha}_{n-1}-\alpha_{n-2})^{-1}\left[Y_{n-3,j+1}-Y_{n-2,j}\right]\right]\notag\\
=&\alpha_{n}(\overline{\alpha}_{n}-\alpha_{n-1})^{-1}Y_{n,j}-(\overline{\alpha}_{n}-\alpha_{n-1})^{-1}
Y_{n,j}\beta_{j+1}\notag\\
&+\alpha_{n}(\overline{\alpha}_{n}-\alpha_{n-1})^{-1}(\overline{\alpha}_{n-1}-\alpha_{n-2})^{-1}Y_{n-1,j}\notag\\
&-(\overline{\alpha}_{n}-\alpha_{n-1})^{-1}(\overline{\alpha}_{n-1}-\alpha_{n-2})^{-1}Y_{n-1,j}\beta_{j+1}\notag\\
&+(\overline{\alpha}_{n}-\alpha_{n-1})^{-1}(\overline{\alpha}_{n-1}-\alpha_{n-2})^{-1}\left[Y_{n-3,j+1}-Y_{n-2,j}\right].
\notag
\end{align}
We can continue iterating by invoking the formula \eqref{5.14} for $k=n-3$. After $j$ iterations, we will get to 
the difference $\left[Y_{n-j-1,j+1}-Y_{n-j,j}\right]$ of two entries from the counter-diagonal $\mathcal D_{n+1}(Y)$
which are zeros, by \eqref{5.3}. Thus, $j$ iterations of formula \eqref{5.14} (for $k=n-1,\ldots,n-j$) results in
\begin{align}
Y_{n-1,j+1}-Y_{n,j}=&\alpha_n\left(\sum_{\ell=1}^j \left(\prod_{i=1}^{\substack{\curvearrowright \\ \ell}}
(\overline{\alpha}_{n-i+1}-\alpha_{n-i})^{-1}\right)Y_{n-\ell+1,j}\right)\notag\\
&-\left(\sum_{\ell=1}^j \left(\prod_{i=1}^{\substack{\curvearrowright \\ \ell}}
(\overline{\alpha}_{n-i+1}-\alpha_{n-i})^{-1}\right)Y_{n-\ell+1,j}\right)\beta_{j+1}.\label{5.15}
\end{align}
We now combine latter formula with formulas \eqref{5.8}, \eqref{5.5} for $\Delta_{n,j+1}$ and $Y_{n,j+1}$, 
respectively, to compute
\begin{equation}
\Delta_{n,j+1}=\alpha_nY_{n,j+1}-Y_{n,j+1}\beta_j+Y_{n-1,j+1}-Y_{n,j}=\alpha_n\mu_{j+1}-\mu_{\j+1}\beta_{j+1}=0,
\label{5.16}
\end{equation}
where the last equality is the consequence of \eqref{5.2}. Due to induction hypothesis and \eqref{5.16}, me may apply 
implication \eqref{5.10} to conclude that $\Delta_{k,i}=0$
also for all $k,i$ such that $k+i= r+1$. By induction principle, $\Delta_{k,j}=0$ for all $k=1,\ldots,n$, $j=1,\ldots,m$
and since $\Delta=[\Delta_{k,j}]=AY-YB$ (by definition \eqref{5.8}), we conclude that $Y$ is a solution to \eqref{5.1}.

\smallskip

It remains to show that any $Y$ solving \eqref{5.1} is necessarily of the form \eqref{5.2}--\eqref{5.5}.
By Theorem \ref{T:4.4} (part (2)), the entries $Y_{k,j}$
(for $k+j\le n$) are the same for any solution $Y$ to the equation \eqref{4.42}. Since the zero
matrix is a solution, it then follows that $Y_{k,j}=0$ whenever $k+j\le n$. Equating the entries
from the bottom row in \eqref{5.1}, we get, on account of \eqref{4.1},
\begin{align}
&\alpha_nY_{n,1}-Y_{n,1}\beta_1=0,\label{5.17}\\
&\alpha_nY_{n,j}-Y_{n,j}\beta_{j}+Y_{n-1,j}-Y_{n,j-1}=0 \quad 
(j=2,\ldots,m).\label{5.18}
\end{align}
Equation \eqref{5.17} simply means that $Y_{n,1}=mu_1$ can be picked arbitrarily in 
$\Pi_{\alpha_n,\beta_1}$, the solution set of the scalar Sylvester equation \eqref{5.17}. 
By (the proof of) Theorem \ref{T:4.4}, any solution $Y$ to the equation \eqref{5.1} also satisfies
\begin{equation}
\widetilde{A}Y-Y\widetilde{B}=0,
\label{5.19}
\end{equation}
where $\widetilde{A}$ and $\widetilde{B}$ are given in \eqref{3.4c} and \eqref{4.32}.
Equating the $(n-1,2)$ entries in \eqref{5.19} we get 
$(\overline{\alpha}_n-\alpha_{n-1})Y_{n-1,2}-Y_{n,1}(\overline{\beta}_{2}-\beta_{1})=0$,
which is equivalent to \eqref{5.6} and thus, verifies \eqref{5.4} for $k=n-1$ and $j=2$.
Then we can apply formula \eqref{5.14} for $k=n-1$ and $j=1$ to get (we recall that $Y_{n-2,2}=Y_{n-1,1}=0$)
$$
Y_{n-1,2}-Y_{n,1}=\alpha_n(\overline{\alpha}_n-\alpha_{n-1})^{-1}Y_{n,1}-(\overline{\alpha}_n-\alpha_{n-1})Y_{n,1}\beta_2,
$$
which, being substituted into equation \eqref{5.18} (for $j=1$) leads us to 
\begin{align*}
0=&\alpha_nY_{n,2}-Y_{n,2}\beta_{2}+Y_{n-1,2}-Y_{n,1}\\
=&\alpha_n\left(Y_{n,2}+(\overline{\alpha}_n-\alpha_{n-1})^{-1}Y_{n,1}\right)-
\left(Y_{n,2}+(\overline{\alpha}_n-\alpha_{n-1})^{-1}Y_{n,1}\right)\beta_2.
\end{align*}
The latter equality tells us that the element 
$$
\mu_2:=Y_{n,2}+(\overline{\alpha}_n-\alpha_{n-1})^{-1}Y_{n,1} 
$$
must satisfy equation \eqref{5.2} for $j=2$, which verifies formula \eqref{5.5} for $j=1$.
So far, we verified formulas \eqref{5.4} and \eqref{5.5} for the entries from the two leftmost columns of $Y$.
Let us assume the formulas hold true for all entries from the $r\ge 2$ leftmost columns and let us write 
explicitly conditions $[\widetilde{A}Y-Y\widetilde{B}]_{k+1,r+1}=0$ (see \eqref{5.19}) for $k=1,\ldots,n-1$;
in view of \eqref{3.4c} and \eqref{4.32}, we have
$$
(\alpha_k-\overline{\alpha}_{k+1})Y_{k,r+1}+Y_{k-1,r+1}-Y_{k+1,r}(\beta_{r}-\overline{\beta}_{r+1})-
Y_{k+1,r-1}=0
$$
which being solved for $Y_{k,r+1}$ verifies formulas \eqref{5.4} for $j=r+1$ and $k\le n-1$. Since 
the formulas \eqref{5.4} and \eqref{5.5} hold for $j=r$ and all $k\le n$ by the assumption, equality
\eqref{5.15} holds for $j=r$. Substituting this equality into equation \eqref{5.18} (for $j=r+1$) gives
\begin{align*}
0=&\alpha_nY_{n,r+1}-Y_{n,r+1}\beta_{r+1}+Y_{n-1,r+1}-Y_{n,r}\\
=&\alpha_n\left(Y_{n,r+1}+\sum_{\ell=1}^r \left(\prod_{i=1}^{\substack{\curvearrowright \\ \ell}}
(\overline{\alpha}_{n-i+1}-\alpha_{n-i})^{-1}\right)Y_{n-\ell+1,r}\right)\notag\\
&-\left(Y_{n,r+1}+\sum_{\ell=1}^r \left(\prod_{i=1}^{\substack{\curvearrowright \\ \ell}}
(\overline{\alpha}_{n-i+1}-\alpha_{n-i})^{-1}\right)Y_{n-\ell+1,r}\right)\beta_{r+1}.
\end{align*}
We conclude that the element 
$$
\mu_{r+1}:=Y_{n,r+1}+\sum_{\ell=1}^r \left(\prod_{i=1}^{\substack{\curvearrowright \\ \ell}}
(\overline{\alpha}_{n-i+1}-\alpha_{n-i})^{-1}\right)Y_{n-\ell+1,r}
$$
must be in $\Pi_{\alpha_n,\beta_{r+1}}$ which verifies \eqref{5.5} for $j=r+1$. By induction principle, 
formulas \eqref{5.4} and \eqref{5.5} hold for all $j=1,\ldots,m$.
\end{proof}
\begin{corollary}
Let $A=\mathcal I_n(\alpha)$ and $B=\mathcal I_m(\beta)$ ($n\ge m$)
be Jordan blocks based on the elements $\alpha\sim\beta$. A matrix $Y=[Y_{k,j}]\in\bH^{n\times m}$  
satisfies $AY=YB$ if and only if 
\begin{equation}
Y_{k,j}=0 \; \; (2\le k+j\le n)\quad\mbox{and}\quad Y_{k,j}=\mu_{k+j-n} \; \; (n+1\le k+j\le n+m),
\label{5.20}   
\end{equation} 
where $\mu_1,\ldots,\mu_m\in\bH$ are any elements from the plane $\Pi_{\alpha,\beta}$
(see \eqref{4.4} and \eqref{4.5}), i.e.,
\begin{equation}
\alpha \mu_i=\mu_i\beta\quad\mbox{for}\quad i=1,\ldots,m.
\label{5.21}
\end{equation}
\label{C:5.2}
\end{corollary}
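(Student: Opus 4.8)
The plan is to regard the statement as the Jordan (constant-chain) specialization of Theorem~\ref{T:5.1}, but to prove it directly from the entrywise form of $AY=YB$, which here is especially rigid. With $A,B$ as in \eqref{4.1} for the constant chains $\balpha=(\alpha,\ldots,\alpha)$ and $\bbeta=(\beta,\ldots,\beta)$, the equation $AY=YB$ is, under the convention $Y_{0,j}=Y_{k,0}=0$, equivalent to
\begin{equation}
\alpha Y_{k,j}-Y_{k,j}\beta+Y_{k-1,j}-Y_{k,j-1}=0\qquad(1\le k\le n,\ 1\le j\le m).
\label{eq:corhankel}
\end{equation}
Writing $w:=\alpha-\overline{\alpha}$ and $v:=\beta-\overline{\beta}$, the hypothesis $\alpha\sim\beta$ gives ${\rm Re}\,\alpha={\rm Re}\,\beta$, whence the clean equivalence $w\mu=\mu v\iff\alpha\mu=\mu\beta$; by Lemma~\ref{L:6.2} this says $w\mu=\mu v$ exactly when $\mu\in\Pi_{\alpha,\beta}$. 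I will use this equivalence repeatedly.

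Sufficiency is a one-line substitution into \eqref{eq:corhankel}. If $Y_{k,j}=\mu_{k+j-n}$ for $k+j\ge n+1$ and $Y_{k,j}=0$ otherwise (set $\mu_s:=0$ for $s\le0$), then on the counter-diagonal $k+j=n+s$ the shifted entries $Y_{k-1,j}$ and $Y_{k,j-1}$ both equal $\mu_{s-1}$ and cancel, leaving $\alpha\mu_s-\mu_s\beta=0$, which holds by \eqref{5.21}. The only boundary entries ($k=1$ or $j=1$) occur on the first nonzero counter-diagonal $k+j=n+1$, where the cancelling terms lie in the zero region, so nothing changes.

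For necessity I would argue in two steps. First, applying Theorem~\ref{T:4.4}(2) to the homogeneous equation (so $C=0$ and hence $\Gamma_{k,j}=0$ for $k+j\le n$) forces $Y_{k,j}=0$ whenever $k+j\le n$. Second, I would show by induction on the counter-diagonal index $\ell$, from $\ell=n+1$ up to $\ell=n+m$, that every entry with $k+j=\ell$ equals one common value $\mu_{\ell-n}\in\Pi_{\alpha,\beta}$. The induction uses \eqref{eq:corhankel} together with the auxiliary identity $\widetilde{A}Y-Y\widetilde{B}=0$, valid for every solution (see \eqref{5.19}), whose constant-chain entries read $w\,Y_{k-1,j}+Y_{k-2,j}=Y_{k,j-1}\,v+Y_{k,j-2}$. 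Fixing $\ell$ and an entry $(p,q)$ with $p+q=\ell$, one checks that for $\ell\ge n+2$ both indices satisfy $p,q\ge2$, so the neighbours $Y_{p,q-1}$ and $Y_{p-1,q}$ lie on counter-diagonal $\ell-1$ and equal $\mu_{\ell-1-n}$ by the inductive hypothesis; then \eqref{eq:corhankel} collapses to $\alpha Y_{p,q}-Y_{p,q}\beta=0$, i.e.\ $Y_{p,q}\in\Pi_{\alpha,\beta}$ (for the base case $\ell=n+1$ the neighbours lie in the zero region, with the same conclusion). Feeding the inductive hypothesis into the auxiliary identity at $(p+1,q)$ annihilates its right-hand side and yields $w\,Y_{p,q}=Y_{p+1,q-1}\,v$; since $Y_{p,q}\in\Pi_{\alpha,\beta}$ means $wY_{p,q}=Y_{p,q}v$, this forces $Y_{p+1,q-1}=Y_{p,q}$. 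Sweeping across the counter-diagonal gives a common value lying in $\Pi_{\alpha,\beta}$, which I name $\mu_{\ell-n}$; this closes the induction and reproduces \eqref{5.20}--\eqref{5.21}.

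I expect the main obstacle to be exactly this collapse to a single value: the scalar equation \eqref{eq:corhankel} alone only pins each counter-diagonal entry to the two-dimensional plane $\Pi_{\alpha,\beta}$, not to one element. The decisive device is the auxiliary identity $\widetilde{A}Y=Y\widetilde{B}$, which supplies the relation $wY_{p,q}=Y_{p+1,q-1}v$ between neighbours and, through the equivalence $w\mu=\mu v\iff\mu\in\Pi_{\alpha,\beta}$, upgrades it to genuine equality. An alternative is to specialize the parametrization \eqref{5.3}--\eqref{5.5} of Theorem~\ref{T:5.1} directly; this also works, but one must track the invertible triangular change of variables relating the parameters there to the counter-diagonal values $\mu_s$ here, which is less transparent than the direct argument above.
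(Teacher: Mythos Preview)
Your proof is correct. The route, however, differs from the paper's. The paper derives Corollary~\ref{C:5.2} by specializing the parametrization \eqref{5.3}--\eqref{5.5} of Theorem~\ref{T:5.1} to the constant chains $\balpha=(\alpha,\ldots,\alpha)$, $\bbeta=(\beta,\ldots,\beta)$: it first checks via \eqref{5.5a} that the $(n{+}1)$-st counter-diagonal is constant (using $(\overline\alpha-\alpha)^{-j}\mu_1(\overline\beta-\beta)^{j}=\mu_1$), then observes that in \eqref{5.5} the correction terms already lie in $\Pi_{\alpha,\beta}$, so the parameter $\mu_{j+1}$ can be absorbed into $Y_{n,j+1}$; iterating this reparametrization yields the Hankel structure \eqref{5.20}. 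You instead bypass the general parametrization and argue directly from the entrywise system \eqref{eq:corhankel} together with the auxiliary relation $\widetilde AY=Y\widetilde B$ (which in the constant-chain case reads $wY_{k-1,j}+Y_{k-2,j}=Y_{k,j-1}v+Y_{k,j-2}$). Your induction on counter-diagonals is clean: the main equation pins each entry to $\Pi_{\alpha,\beta}$, and the auxiliary relation at $(p{+}1,q)$, after cancelling two equal entries from the previous counter-diagonal, gives $wY_{p,q}=Y_{p+1,q-1}v$, whence $Y_{p+1,q-1}=Y_{p,q}$. The boundary bookkeeping (for $\ell\ge n+2$ one has $p,q\ge2$, and on the step linking consecutive entries one has $q\ge\ell-n+1\ge3$, so the indices $(p{-}1,q)$ and $(p{+}1,q{-}2)$ stay in range) checks out. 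What your approach buys is transparency: no need to unwind the recursive formulas or track the triangular change of parameters that the paper performs; what the paper's approach buys is that it exhibits the corollary as a genuine specialization of the general theorem, making the relationship between the two results explicit.
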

\begin{proof}
It suffices to let $\alpha_k=\alpha$ and $\beta_j=\beta$ in Theorem \ref{T:5.1} and to show that 
formulas \eqref{5.3}--\eqref{5.5} amount to \eqref{5.20} in this particular setting. This is clearly true
for $k+1\le n$ and for $Y_{n,1}=\mu_1$. For the rest, we will use equalities
\begin{equation}
\overline{\alpha} \mu_i=\mu_i\overline{\beta} \quad (i=1,\ldots,m),
\label{5.22}
\end{equation}
which follow from \eqref{5.21} since ${\rm Re}(\alpha)={\rm Re}(\beta)$, by \eqref{2.1}.
Due to equalities \eqref{5.21} and \eqref{5.22}, formula \eqref{5.5a} takes the form
$$
Y_{n-j,j+1}=(\overline{\alpha}-\alpha)^{-j}\mu_1(\overline{\beta}-\beta)^j=\mu_1\quad (j=1,\ldots,m-1).
$$
By \eqref{5.7}, $Y_{n,2}$ is necessarily of the form $Y_{n,2}=\mu_2-(\overline{\alpha}-\alpha)^{-1}\mu_1$
where $\mu_2$ is any element in $\Pi_{\alpha,\beta}$. But since 
$$
\alpha(\overline{\alpha}-\alpha)^{-1}\mu_1=(\overline{\alpha}-\alpha)^{-1}\alpha 
(\overline{\alpha}-\alpha)^{-1}\mu_1\beta,
$$
i.e., since $(\overline{\alpha}-\alpha)^{-1}\mu_1\in\Pi_{\alpha,\beta}$, it follows that 
$Y_{n,2}$ can be chosen arbitrarily in $\Pi_{\alpha,\beta}$. In other words, we may let 
$Y_{n,2}=\mu_2\in\Pi_{\alpha,\beta}$. Since $Y_{n-j,j+1}=\mu_1$ for all $j=0,\ldots,m-1$, the formula 
\eqref{5.4} (for $k+j=n+2$) now gives
$$
Y_{n-j+1,j+1}=(\overline{\alpha}-\alpha)^{-j}\mu_2(\overline{\beta}-\beta)^j=\mu_2\quad (j=2,\ldots,m-1)
$$
verifying the Hankel structure of $\mathcal D_{n+1}(Y)$. Now we have by \eqref{5.5},
$$
Y_{n,3}=\mu_3-(\overline{\alpha}-\alpha)^{-1}\mu_2-(\overline{\alpha}-\alpha)^{-2}\mu_1,
$$
and since the second and the third terms on the right belong to $\Pi_{\alpha,\beta}$ and 
$\mu_3$ can be chosen in $\Pi_{\alpha,\beta}$ arbitrarily, we may let $Y_{n,3}=\mu_3\in\Pi_{\alpha,\beta}$.
Now we use formula \eqref{5.4} and take into account the Hankel structure of $\mathcal D_{n+1}(Y)$ to verify
the Hankel structure of $\mathcal D_{n+2}(Y)$. Repeating this argument $m$ times we arrive at the desired 
conclusion. 
\end{proof}
Recall that all results in Sections 4 and 5 were obtained under assumption that $n\ge m$. 
The assumption is not restrictive in the following sense: if $m\ge n$, then one can apply 
the obtained results to the adjoint equation \eqref{3.18a} as in the proof of Theorem \ref{T:3.7},
i.e., to replace $A$, $B$ and $C$ by $B^*$, $A^*$ and $C^*$, respectively. We omit further details.

\bibliographystyle{amsplain}

\end{document}